\theoremstyle{plain}
\newtheorem{proposition}{Proposition}
\newtheorem{theorem}[proposition]{Theorem}
\newtheorem{lemma}[proposition]{Lemma}
\newtheorem{corollary}[proposition]{Corollary}
\theoremstyle{definition}
\newtheorem{conjecture}[proposition]{Conjecture}
\theoremstyle{definition}
\newtheorem{example}[proposition]{Example}
\newtheorem{remark}[proposition]{Remark}
\numberwithin{equation}{section}
\numberwithin{proposition}{section}
\gdef\myletter{}
\let\savetheequation\theequation
\def\theequation{\savetheequation\myletter}
\newcommand{\CC}{{\mathbb C}}
\newcommand{\RR}{{\mathbb R}}
\newcommand{\ZZ}{{\mathbb Z}}
\renewcommand{\date}{\today}
\def \hat{\widehat}
\begin{document}


\title[Convex Bodies]{\bf Pluripotential Theory and Convex Bodies}

\author{T. Bayraktar, T. Bloom, and N. Levenberg*}{\thanks{*Supported by Simons Foundation grant No. 354549}}
\subjclass{32U15, \ 32U20, \ 31C15}%
\keywords{convex body, $P-$extremal function}%

\address{University of Hartford, CT 06117 USA}
\email{bayraktar@hartford.edu}

\address{University of Toronto, Toronto, Ontario M5S 2E4 Canada}
\email{bloom@math.toronto.edu}

\address{Indiana University, Bloomington, IN 47405 USA}
\email{nlevenbe@indiana.edu}

\maketitle
\begin{abstract} In their seminal paper \cite{[BB]}, Berman and Boucksom exploited ideas from complex geometry to analyze asymptotics of spaces of holomorphic sections of tensor powers of certain line bundles $L$ over compact, complex manifolds as the power grows. This yielded results on weighted polynomial spaces in weighted pluripotential theory in $\CC^d$. Here, motivated from \cite{Bay}, we work in the setting of weighted pluripotential theory arising from polynomials associated to a convex body in $(\RR^+)^d$. These classes of polynomials need not occur as sections of tensor powers of a line bundle $L$ over a compact, complex manifold. We follow the approach in \cite{[BB]} to recover analogous results.

\end{abstract}

\section{Introduction}\label{sec:intro}  Motivated by probabilistic results in \cite{Bay} as well as some questions in multivariate approximation theory \cite{BosLev}, we study pluripotential-theoretic notions associated to closed subsets $K\subset \CC^d$ and weight functions $Q$ on $K$ in the following setting. Given a convex body $P\subset (\RR^+)^d$ we define finite-dimensional polynomial spaces 
$$Poly(nP):=\{p(z)=\sum_{J\in nP\cap (\ZZ^+)^d}c_J z^J: c_J \in \CC\}, \ n=1,2,...$$
associated to $P$. Here $z^J=z_1^{j_1}\cdots z_d^{j_d}$ for $J=(j_1,...,j_d)$. 
The main goal of this work is to give a self-contained presentation of some of the results and techniques of R. Berman, S. Boucksom and D. Nystrom in \cite{[BB]} and \cite{BBN}, valid in the setting of holomorphic sections of tensor powers of certain line bundles $L$ over compact, complex manifolds, for the spaces $Poly(nP)$. A key result in \cite{[BB]} relates asymptotics of ball volume ratios of spaces of holomorphic sections with an Aubin-Mabuchi type energy of appropriate pluripotential-theoretic extremal functions. Our spaces $Poly(nP)$ do not generally arise as holomorphic sections of tensor powers of a line bundle. However, many of the techniques in \cite{[BB]} and \cite{BBN} are available and we are able to modify their approach to prove the analogous key result, Theorem \ref{keycn}, and similar consequences; e.g., that  {\it asymptotically weighted $P-$Fekete arrays} and  {\it weighted $P-$optimal measures} distribute asymptotically like the Monge-Ampere measure $(dd^cV_{P,K,Q}^*)^d$ of the weighted $P-$extremal function (Corollaries \ref{asympwtdfek} and \ref{awom}). A difference with \cite{[BB]} and \cite{BBN} is that here we deduce the existence of a weighted $P-$transfinite diameter; i.e., a limit of scaled maximal weighted Vandermondes, as a consequence of Theorem \ref{keycn} (see Remark \ref{rmk42}).

In the next section, we give definitions and background for the relevant pluripotential-theoretic notions. We define Lelong classes $L_P$ and $L_{P,+}$ associated to a convex body $P\subset (\RR^+)^d$. For certain $K\subset \CC^d$ and $Q:K\to \RR$ we define a weighted $P-$extremal function $V_{P,K,Q}$; weighted $P-$transfinite diameter, and weighted $P-$optimal measures. Ball volume ratios, as defined and utilized in \cite{[BB]}, are discussed in subsection 2.5. In section \ref{enprop} we discuss the Aubin-Mabuchi type energy $\mathcal E(u,v)$ associated to a pair of functions $u,v$ in $L_{P,+}$. The differentiability of the composition of $\mathcal E$ with a projection operator, proved in section \ref{diffep}, is a key step in verifying the main result, Theorem \ref{keycn}, on ball volume ratio asymptotics. This latter is proved in section \ref{mainth}. Both sections follow arguments in \cite{[BB]}. The applications described in the previous paragraph are given in section \ref{sec:fob}, following \cite{BBN}.

\setcounter{tocdepth}{2}
\tableofcontents

\section{Background.}\label{sec:back}

\subsection {$P-$extremal functions: Results from \cite{Bay}.} Let $\RR^+=[0,\infty)$. We fix a {\it convex body $P\subset (\RR^+)^d$; i.e., $P$ is compact, convex and $P^o\not = \emptyset$.} A standard example occurs when $P$ is a non-degenerate convex polytope, i.e., the convex hull of a finite subset of $(\ZZ^+)^d$ in $(\RR^+)^d$ with nonempty interior. Associated with $P$, following \cite{Bay}, we consider the finite-dimensional polynomial spaces 
$$Poly(nP):=\{p(z)=\sum_{J\in nP\cap (\ZZ^+)^d}c_J z^J: c_J \in \CC\}$$
for $n=1,2,...$ where $z^J=z_1^{j_1}\cdots z_d^{j_d}$ for $J=(j_1,...,j_d)$. We let $d_n$ be the dimension of $Poly(nP)$. For $P=\Sigma$ where 
$$\Sigma:=\{(x_1,...,x_d)\in \RR^d: 0\leq x_i \leq 1, \ \sum_{j=1}^d x_i \leq 1\},$$ we have $Poly(n\Sigma)=\mathcal P_n$, the usual space of holomorphic polynomials of degree at most $n$ in $\CC^d$. Given $P$, there exists a minimal positive integer $A=A(P)\geq 1$ such that $P\subset A\Sigma$. Thus
$$ Poly(nP) \subset \mathcal P_{An} \ \hbox{for all} \ n.$$

Associated to $P$ we define the {\it logarithmic indicator function} 
$$H_P(z):=\sup_{J\in P} \log |z^J|:=\sup_{J\in P} \log[|z_1|^{j_1}\cdots |z_d|^{j_d}].$$
{\it Throughout this paper, we make the assumption on $P$ that} 
\begin{equation}\label{sigmainkp} \Sigma \subset kP \ \hbox{for some} \ k\in \ZZ^+.  \end{equation}
Under this hypothesis, we have
\begin{equation} \label{sigmainkp2}  H_P(z)\geq \frac{1}{k}\max_{j=1,...,d}\log^+ |z_j|. \end{equation}
We use $H_P$ to define generalizations of the Lelong classes $L(\CC^d)$, the set of all plurisubharmonic (psh) functions $u$ on $\CC^d$ with the property that $u(z) - \log |z| = 0(1), \ |z| \to \infty$, and 
$$L^+(\CC^d)=\{u\in L(\CC^d): u(z)\geq \max_{j=1,...,d} \log^+ |z_j| + C_u\}$$
where $C_u$ is a constant depending on $u$. Define
$$L_P=L_P(\CC^d):= \{u\in PSH(\CC^d): u(z)- H_P(z) =0(1), \ |z| \to \infty \},$$ and 
$$L_{P,+}=L_{P,+}(\CC^d)=\{u\in L_P(\CC^d): u(z)\geq H_P(z) + C_u\}.$$
For $p\in Poly(nP), \ n\geq 1$ we have $\frac{1}{n}\log |p|\in L_P$; also each $u\in L_{P,+}$ is bounded below in $\CC^d$. We are working on $\CC^d$ instead of $(\CC\setminus 0)^d$ as in \cite{Bay}. Note $L_{\Sigma} = L(\CC^d)$ and $L_{\Sigma,+} = L^+(\CC^d)$. 

Given $E\subset \CC^d$, the {\it $P-$extremal function of $E$} is given by $V^*_{P,E}(z):=\limsup_{\zeta \to z}V_{P,E}(\zeta)$ where
$$V_{P,E}(z):=\sup \{u(z):u\in L_P(\CC^d), \ u\leq 0 \ \hbox{on} \ E\}.$$
Next, let $K\subset \CC^d$ be closed and let $w:K\to \RR^+$ be an {\it admissible weight function on $K$:  $w$ is a nonnegative, uppersemicontinuous function with $\{z\in K:w(z)>0\}$ nonpluripolar.} Letting $Q:= -\log w$, if $K$ is unbounded, we additionally require that 
$$\liminf_{|z|\to \infty, \ z\in K} [Q(z)- H_P(z)]=+\infty.$$
Define the {\it weighted $P-$extremal function} $$V^*_{P,K,Q}(z):=\limsup_{\zeta \to z}V_{P,K,Q}(\zeta)$$ where
$$V_{P,K,Q}(z):=\sup \{u(z):u\in L_P(\CC^d), \ u\leq Q \ \hbox{on} \ K\}. $$
If $Q=0$ we simply write $V_{P,K,Q}=V_{P,K}$, consistent with the previous notation. In the case $P=\Sigma$, 
\begin{equation} \label{vkq} V_{\Sigma,K,Q}(z)=V_{K,Q}(z):= \sup \{u(z):u\in L(\CC^d), \ u\leq Q \ \hbox{on} \ K\} \end{equation}
is the usual weighed extremal function, e.g., as in Appendix B of \cite{ST}.

We recall some results in \cite{Bay}, modified for our setting of $\CC^d$ and $P\subset (\RR^+)^d$. Our hypothesis (\ref{sigmainkp}) implies Lemma 2.2 in \cite{Bay} which was used to prove a result on total mixed Monge-Amp\`ere masses and a Siciak-Zaharjuta type theorem. Let $\omega:=dd^c  \max_{j=1,...,d} \log^+|z_j|$.

\begin{proposition} \label{turgay1} Let $P_i \subset (\RR^+)^d, \ i=1,...,k, \ k\leq d,$ be convex bodies and let $u_i,v_i\in L_{P_i}\cap L^{\infty}_{loc}(\CC^d), \ i=1,...,k$ with 
$$u_i(z) \leq v_i(z)+C_i \ \hbox{for} \ z \in \CC^d, \ i=1,...,k.$$
Then
$$\int_{\CC^d} dd^cu_1\wedge \cdots \wedge dd^cu_k \wedge \omega^{d-k} \leq \int_{\CC^d} dd^cv_1\wedge \cdots \wedge dd^cv_k \wedge \omega^{d-k}.$$
In particular, if $u_i\in L_{P_i,+}, \ i=1,...,k$, then
$$\int_{\CC^d} dd^cu_1\wedge \cdots \wedge dd^cu_k \wedge \omega^{d-k}=M_k$$
where $M_k$ is a constant depending only on $k,d,P_1,...,P_k$ (independent of $u_i\in L_{P_i,+}$).

\end{proposition}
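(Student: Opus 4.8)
The plan is to reduce both assertions to the classical comparison principle for the complex Monge–Ampère operator via a standard compactness/truncation argument, exploiting hypothesis \eqref{sigmainkp} to control everything near infinity by $\omega$. First I would observe that by \eqref{sigmainkp2} and the definition of $L_{P_i}$, for each $i$ there is a constant $B_i$ with $u_i(z), v_i(z) \leq H_{P_i}(z) + B_i \leq k_i \max_j \log^+|z_j| + B_i'$ for suitable $k_i \in \ZZ^+$ (using $\Sigma \subset k_i P_i$), and similarly each is locally bounded by hypothesis. The key point is that on the set $\{\max_j|z_j| > R\}$ for large $R$, the functions $u_i$ and $v_i$ differ from multiples of $\max_j \log^+|z_j|$ by bounded amounts, so the currents $dd^c u_i$ and $dd^c v_i$ agree asymptotically with $\omega$ in the sense needed to make the integrals finite and comparable. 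This is precisely where \eqref{sigmainkp} is used: without $\Sigma \subset k_iP_i$ one cannot dominate the Lelong class $L_{P_i}$ by the model weight $\max_j \log^+|z_j|$.

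Next I would prove the inequality. Fix a smooth exhaustion; for $R>0$ let $D_R = \{z : \max_j |z_j| < e^R\}$ (a polydisk), chosen so that on $\partial D_R$ the functions are well-controlled. The strategy is to replace $u_i$ and $v_i$ by the maxima $\tilde u_i = \max(u_i, H_{P_i} - C)$ and similarly for $v_i$ with $C$ large; outside a fixed compact set these agree with $H_{P_i} - C$ (or with each other), so the difference of the two integrals localizes to a fixed bounded region. On that region, after adding the same pluriharmonic-type correction, one applies the integration-by-parts/comparison argument of Bedford–Taylor: since $u_i \leq v_i + C_i$ globally and the currents have the same boundary behavior, Stokes' theorem on $D_R$ (with the boundary terms vanishing as $R \to \infty$ because of the uniform bound $u_i - v_i = O(1)$ and the estimate on $\omega^{d-k}$-mass of the boundary sphere) yields
$$\int_{\CC^d} dd^c u_1 \wedge \cdots \wedge dd^c u_k \wedge \omega^{d-k} \leq \int_{\CC^d} dd^c v_1 \wedge \cdots \wedge dd^c v_k \wedge \omega^{d-k}.$$
Here I would cite Lemma 2.2 of \cite{Bay}, which \eqref{sigmainkp} guarantees, to justify that these total masses are finite and that the boundary integrals over $\partial D_R$ tend to $0$; this is the technical heart and the main obstacle, since it requires careful estimates on the growth of the mixed currents near infinity.

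Finally, the "in particular" claim follows by symmetry: if $u_i, v_i \in L_{P_i,+}$, then both $u_i \leq v_i + C_i$ and $v_i \leq u_i + C_i'$ hold (each pair of functions in $L_{P_i,+}$ differs from $H_{P_i}$ by a bounded amount, hence from each other by a bounded amount), so applying the inequality twice gives equality of the two integrals. Thus the common value depends only on the classes $L_{P_i,+}$, i.e.\ only on $k, d, P_1, \dots, P_k$; one can compute it by choosing the explicit representatives $u_i = H_{P_i}$ (or a smooth regularization thereof) and evaluating the resulting mixed Monge–Ampère integral, which is a combinatorial/volume quantity attached to the convex bodies $P_1, \dots, P_k$. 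I expect the differentiability and finiteness estimates near infinity — rather than the comparison principle itself — to be where the real work lies, and I would lean on \cite{Bay} for the precise form of those estimates.
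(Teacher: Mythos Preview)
The paper does not prove this proposition; it is quoted from \cite{Bay}, with the note that hypothesis \eqref{sigmainkp} supplies Lemma~2.2 of \cite{Bay}, the technical ingredient there. So there is no in-paper argument to compare against, and your instinct to defer the hard estimates to \cite{Bay} matches exactly what the authors do.

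Your deduction of the ``in particular'' clause by symmetry is correct and standard: for $u_i,v_i\in L_{P_i,+}$ one has two-sided bounds $|u_i-v_i|\le C$, so the inequality runs both ways and forces equality, and the common value can then be read off from the explicit choice $u_i=H_{P_i}$.

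There is, however, a genuine gap in your truncation step for the inequality. You set $\tilde u_i=\max(u_i,H_{P_i}-C)$ and claim that outside a fixed compact set $\tilde u_i$ agrees with $H_{P_i}-C$ (or with $\tilde v_i$), so that the difference of the two integrals localizes. But $u_i\in L_{P_i}$ gives only the one-sided bound $u_i\le H_{P_i}+O(1)$; with no lower bound, $u_i$ may cross the level $H_{P_i}-C$ on an unbounded set, and $\tilde u_i$ need not coincide with either $u_i$ or $H_{P_i}-C$ near infinity. Your truncation does force $\tilde u_i\in L_{P_i,+}$, but then comparing the original mass $\int dd^cu_1\wedge\cdots$ with $\int dd^c\tilde u_1\wedge\cdots$ is exactly the monotonicity you are trying to prove, so the argument becomes circular as written. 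A repair is possible---for instance truncating against $v_i$ rather than against $H_{P_i}$ (so that $u_i^{(j)}:=\max(u_i,v_i-j)$ has $u_i^{(j)}-v_i$ globally bounded for each fixed $j$ and $u_i^{(j)}\downarrow u_i$), or working one slot at a time with the comparison principle on suitable sublevel sets---but it requires more than you have sketched. A minor slip: the upper bound $H_{P_i}\le A_i\max_j\log^+|z_j|$ comes from $P_i\subset A_i\Sigma$, not from $\Sigma\subset k_iP_i$; the latter yields the lower bound \eqref{sigmainkp2}.
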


\begin{remark} \label{totalMA} The constants $M_k$ can be computed; see Section 2.1 of \cite{Bay}. Normalizing so that $\int_{\CC^d} \omega^d=1$, for any $u\in L_{P,+}$ we have
\begin{equation}\label{norm}\int_{\CC^d} (dd^cu)^d =\int_{\CC^d} (dd^c H_P)^d = d! Vol(P)=:n_d \end{equation}
where $Vol(P)$ denotes the euclidean volume of $P\subset (\RR^+)^d$.

\end{remark}

\begin{proposition} \label{turgay2} Let $P\subset (\RR^+)^d$ be a convex body, $K\subset \CC^d$ compact, and $w=e^{-Q}$ an admissible weight on $K$. Then 
$$V_{P,K,Q} =\lim_{n\to \infty} \frac{1}{n} \log \Phi_n=\lim_{n\to \infty} \frac{1}{n} \log \Phi_{n,P,K,Q}$$
pointwise on $\CC^d$ where
$$\Phi_n(z):= \sup \{|p_n(z)|: p_n\in Poly(nP),  \ \max_{\zeta \in K} |p_n(\zeta )e^{-nQ(\zeta)}|\leq 1\}.$$
Moreover, if $Q$ is continuous, i.e., $Q\in C(K)$, and $V_{P,K,Q}$ is continuous, the convergence is locally uniform on $\CC^d$.

\end{proposition}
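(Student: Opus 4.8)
The plan is to prove this Siciak--Zaharjuta type theorem by adapting the classical argument (as in the $P=\Sigma$ case) to the convex body setting, using hypothesis (\ref{sigmainkp}) to control growth at infinity. Write $\psi_n := \frac{1}{n}\log\Phi_n$ and $V := V_{P,K,Q}$.

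\emph{Step 1: The easy inequality $\psi_n \le V^*_{P,K,Q}$.} For any $p_n \in Poly(nP)$ with $\max_{K}|p_n e^{-nQ}| \le 1$, the function $u := \frac{1}{n}\log|p_n|$ lies in $L_P$ (as noted in the excerpt) and satisfies $u \le Q$ on $K$; hence $u \le V_{P,K,Q} \le V^*_{P,K,Q}$ pointwise. Taking the sup over such $p_n$ gives $\psi_n \le V^*_{P,K,Q}$, so $\limsup_n \psi_n \le V^*_{P,K,Q}$. Since $\Phi_n \Phi_m \le \Phi_{n+m}$ (product of competitors), $\log \Phi_n$ is superadditive, so $\lim_n \psi_n$ exists and equals $\sup_n \psi_n$; call this limit $\psi$, a function with $\psi \le V^*_{P,K,Q}$ everywhere. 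One checks $\psi \in L_P$: the growth bound $\Phi_n(z) \le C^n e^{nH_P(z)+o(n)}$ follows from a Bernstein--Walsh type estimate using $Poly(nP) \subset \mathcal P_{An}$ together with (\ref{sigmainkp2}) to compare $H_P$ with $\max_j \log^+|z_j|$, and upper semicontinuity of $\psi^*$ is automatic.

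\emph{Step 2: The hard inequality $V \le \psi^*$ (equivalently $V^*_{P,K,Q} \le \psi^*$).} It suffices to show: for every $u \in L_P$ with $u \le Q$ on $K$ and every $\varepsilon > 0$, one has $u \le \psi + \varepsilon$ near any given point, or more robustly $u \le \psi^*$. The standard route is an $L^2$/Hörmander argument or a direct polynomial approximation: given $u \in L_P$ with $u \le Q$ on $K$, approximate $e^{u}$ from below by a polynomial $p_n \in Poly(nP)$ of the right degree type with $|p_n| \le e^{nu + o(n)}$ globally (so $\max_K |p_n e^{-nQ}| \le e^{o(n)}$), which gives $\frac{1}{n}\log|p_n| \le \psi_n + o(1)$ and then $u \le \psi^* $ after letting $n\to\infty$. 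Producing $p_n$ in the correct space $Poly(nP)$ — rather than merely in $\mathcal P_{An}$ — is where hypothesis (\ref{sigmainkp}) and Lemma 2.2 of \cite{Bay} enter: they guarantee that the relevant monomials stay inside $nP$ and that total Monge--Ampère masses match (Proposition \ref{turgay1}), which is exactly what makes the Hörmander weight $nu$ usable with the correct polynomial space. I expect this construction of admissible polynomials inside $Poly(nP)$ to be the main obstacle; everything else is bookkeeping.

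\emph{Step 3: From $Q$ to general admissible weights and the regularization.} First treat $Q$ continuous, where the two preceding steps combine to give $\psi = \psi^* = V^*_{P,K,Q}$, and a standard Dini/monotonicity argument upgrades pointwise to locally uniform convergence once $V_{P,K,Q}$ is also continuous (the sequence $\psi_n$ is, up to $o(1)$, monotone after the superadditivity normalization, and the limit is continuous, so convergence is locally uniform on compacta; growth control at $\infty$ from (\ref{sigmainkp2}) handles the non-compact direction). For general admissible (u.s.c.) $Q$ with $w = e^{-Q}$ admissible, write $Q$ as a decreasing limit of continuous $Q_j \ge Q$, apply the continuous case to each $Q_j$, and pass to the limit using that $V_{P,K,Q_j} \downarrow V_{P,K,Q}$ and $\Phi_{n,Q_j} \le \Phi_{n,Q}$; a comparison in the other direction (using that the nonpluripolar set $\{w>0\}$ controls things via a negligibility argument) closes the gap, exactly as in the classical Siciak--Zaharjuta proof. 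The statement that $\Phi_n$ and $\Phi_{n,P,K,Q}$ give the same limit is immediate since they are literally the same object under the notation $Q = -\log w$; the two names are used only for emphasis.
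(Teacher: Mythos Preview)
The paper does not give its own proof of this proposition: it is quoted as background from \cite{Bay} (see the sentence preceding Proposition~\ref{turgay1} and the remark following Example~\ref{torus}), with the only added commentary being that hypothesis (\ref{sigmainkp}) furnishes Lemma~2.2 of \cite{Bay}, on which the Siciak--Zaharjuta argument there rests. So there is no in-paper proof to compare against beyond that citation.

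Your outline is the standard Siciak--Zaharjuta skeleton and is structurally sound. Step~1 is fine (convexity of $P$ gives $Poly(nP)\cdot Poly(mP)\subset Poly((n+m)P)$, whence the superadditivity), and Step~3 is routine once Steps~1--2 are in hand. But Step~2 is not a proof: you correctly flag the construction of approximating polynomials lying in $Poly(nP)$ as ``the main obstacle'' and then do not carry it out. That construction is the entire substance of the result, and it is precisely what the paper defers to \cite{Bay} for. Two small corrections. First, the statement asserts the pointwise equality $V_{P,K,Q}=\lim_n\psi_n$ with no upper-semicontinuous regularization, so in Step~2 you must obtain $u(z)\le\psi(z)$ at each fixed $z$, not merely $u\le\psi^*$; a Demailly/Bergman-kernel approximation does give this pointwise control if executed. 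Second, your invocation of Proposition~\ref{turgay1} as ``exactly what makes the H\"ormander weight $nu$ usable'' is misplaced: that proposition is a statement about total mixed Monge--Amp\`ere masses, not an $L^2$ estimate, and according to the paper both it and Proposition~\ref{turgay2} are separate consequences of Lemma~2.2 of \cite{Bay} rather than one being an ingredient of the other.
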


\begin{remark} \label{locreg} Since $ P \subset A\Sigma$, we have
$$\Phi_{n,P,K,Q}\leq \Phi_{n,A\Sigma,K,Q}.$$
In particular, for $Q=0$, 
$$\lim_{n\to \infty} \frac{1}{n} \log \Phi_{n,P,K,0}=V_{P,K} \leq A\cdot \lim_{n\to \infty} \frac{1}{An} \log \Phi_{n,A\Sigma,K,0}=A\cdot V_{\Sigma,K}.$$
Thus for any $K$, 
\begin{equation}\label{locreg} V_{\Sigma,K}^*(z)=0 \ \hbox{implies}  \ V_{P,K}^*(z)=0.\end{equation} 
A compact set $K\subset \CC^d$ is {\it locally regular} if for all $z\in K$ and all balls $B(z,r):=\{w:|w-z|\leq r\}$ we have $V_{\Sigma,K\cap B(z,r)}^*(z)=0$. As examples, the closure of any  bounded open set $D\subset \CC^d$ with $C^1$ boundary is locally regular. It is known (cf., \cite{Si}, Proposition 2.16) that if $K$ is locally regular and $Q\in C(K)$ then $V_{K,Q}$ in (\ref{vkq}) is continuous. Using (\ref{locreg}) for $K\cap B(z,r)$, the same proof shows that for {\it any} convex body $P\subset (\RR^+)^d$, if $K$ is locally regular and $Q\in C(K)$ then $V_{P,K,Q}$ is continuous. \end{remark}

Following the proofs of Lemma 2.3 and Theorem 2.5 in Appendix B of \cite{ST}, we have the following.

\begin{proposition} \label{turgay3} Let $P\subset (\RR^+)^d$ be a convex body, $K\subset \CC^d$ be closed, and let $w=e^{-Q}$ be  an admissible weight on $K$. Then $S_w:=supp (dd^cV_{P,K,Q}^*)^d$ is compact and 
\begin{equation}\label{suppw}supp\bigl( (dd^cV_{P,K,Q}^*)^d\bigr)\subset \{z\in K: V_{P,K,Q}^*(z)\geq Q(z)\}.\end{equation}
Moreover, $V_{P,K,Q}^*=Q$ q.e. on $supp(dd^cV_{P,K,Q}^*)^d$, i.e., off of a pluripolar set. In particular, if $Q$ and $V_{P,K,Q}$ are continuous,
$$supp\bigl( (dd^cV_{P,K,Q})^d\bigr)\subset \{z\in K: V_{P,K,Q}(z)=Q(z)\}.$$
\end{proposition}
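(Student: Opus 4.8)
The plan is to transcribe the classical weighted argument (Lemma~2.3 and Theorem~2.5 of Appendix~B of \cite{ST}) into the class $L_P$, with $\log|z|$ replaced by $H_P$. Write $V:=V_{P,K,Q}$, $V^*:=V_{P,K,Q}^*$, $\mu:=(dd^cV^*)^d$. As a preliminary, the competitor $u=H_P+c$ shows $V\ge H_P+c$ once $c\le\inf_K(Q-H_P)$, and this infimum is finite (by compactness of $K$, or, if $K$ is unbounded, because $Q-H_P$ is lower semicontinuous on $K$ and tends to $+\infty$ there); together with the Siciak--Zaharjuta representation of Proposition~\ref{turgay2} this gives $V^*\in L_{P,+}$. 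Hence $V^*$ is locally bounded, $\mu$ is a positive measure, and by Proposition~\ref{turgay1} and Remark~\ref{totalMA} it has finite total mass $n_d$. Note also that $Q=-\log w$ is lower semicontinuous (as $w$ is upper semicontinuous), so $V^*-Q$ is upper semicontinuous and $\{z\in K:V^*(z)\ge Q(z)\}$ is closed.

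The heart of the proof is the inclusion $\mathrm{supp}\,\mu\subset\{z\in K:V^*(z)\ge Q(z)\}$, which I would prove by a localized comparison. Suppose $z_0\in\mathrm{supp}\,\mu$ with either $z_0\notin K$, or $z_0\in K$ and $2\eta:=Q(z_0)-V^*(z_0)>0$. If $z_0\notin K$, choose a small ball $B=B(z_0,r)$ with $\bar B\cap K=\emptyset$ ($K$ being closed); if $z_0\in K$, use upper semicontinuity of $V^*$ and lower semicontinuity of $Q$ to choose $B$ with $\sup_{\bar B}V^*\le V^*(z_0)+\eta$ and $\inf_{\bar B\cap K}Q\ge Q(z_0)-\eta$. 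Since $z_0\in\mathrm{supp}\,\mu$, $V^*$ is not maximal near $z_0$: there is an open $D\subset\subset B$ and $v\in PSH(D)$ with $\limsup_{z\to\partial D}(v-V^*)\le 0$ and $v(z_1)>V^*(z_1)$ for some $z_1\in D$; after replacing $v$ by $\max(v,V^*)$ we have $v\ge V^*$ on $D$. Define $\tilde V:=v$ on $D$ and $\tilde V:=V^*$ off $D$. The gluing lemma gives $\tilde V\in PSH(\CC^d)$, and as $\tilde V=V^*$ outside the bounded set $D$, also $\tilde V\in L_P$. On $K\setminus D$ we have $\tilde V=V^*\le Q$ quasi-everywhere; and for $z\in K\cap D$ the maximum principle gives $v(z)\le\sup_{\bar B}V^*\le V^*(z_0)+\eta=Q(z_0)-\eta\le Q(z)$. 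Thus $\tilde V\le Q$ quasi-everywhere on $K$; since the $P$-extremal function is unchanged when the constraint $u\le Q$ on $K$ is relaxed to hold only quasi-everywhere (a standard consequence of pluripolar sets lying in $-\infty$-sets of psh functions), $\tilde V\le V^*$, contradicting $\tilde V(z_1)>V^*(z_1)$.

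Granting this inclusion, the rest is bookkeeping. Compactness of $S_w=\mathrm{supp}\,\mu$: since $V^*\in L_P$ we have $V^*(z)\le H_P(z)+C$ for $|z|$ large, while $Q(z)-H_P(z)\to+\infty$ along $K$, so $\{z\in K:V^*(z)\ge Q(z)\}$ is bounded; being closed and containing $S_w$, it is compact, hence so is $S_w$. For the quasi-everywhere equality: every competitor obeys $u\le Q$ on $K$, so $V\le Q$ on $K$, whence $\{z\in K:V^*(z)>Q(z)\}\subset\{V^*>V\}$, a pluripolar (negligible) set because the defining family is locally bounded above ($\{w>0\}$ being nonpluripolar); thus $V^*\le Q$ quasi-everywhere on $K$, and combined with $S_w\subset\{V^*\ge Q\}\cap K$ this yields $V^*=Q$ quasi-everywhere on $S_w$. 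If, finally, $Q$ and $V$ are continuous then $V^*=V$, so $\mathrm{supp}(dd^cV)^d=S_w\subset\{V\ge Q\}\cap K$ by the inclusion just proved, while $V\le Q$ on $K$; hence $V=Q$ on $S_w$, giving $\mathrm{supp}(dd^cV)^d\subset\{z\in K:V(z)=Q(z)\}$.

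The main obstacle is the localized comparison in the second paragraph: the competitor $\tilde V$ must strictly beat $V^*$ near $z_0$ while still satisfying $u\le Q$ on \emph{all} of $K$, including the pluripolar set on which $V^*$ may exceed $Q$. The mechanism is (i) to localize to a ball so small that upper semicontinuity of $V^*$ and lower semicontinuity of $Q$ — the latter being exactly the admissibility requirement on $w$ — leave a gap of size $\eta$, and (ii) to absorb the pluripolar exceptional set via the insensitivity of the extremal function to relaxing the obstacle quasi-everywhere. Checking that these two ingredients, together with the equivalence between maximality of $V^*$ and vanishing of $(dd^cV^*)^d$, the gluing lemma, and the comparison principle, remain valid verbatim in the $L_P$ setting is where the routine-but-necessary work lies.
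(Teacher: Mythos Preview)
Your proposal is correct and is exactly the route the paper indicates: the paper does not give its own argument but simply says ``Following the proofs of Lemma~2.3 and Theorem~2.5 in Appendix~B of \cite{ST},'' and you have carried out precisely that transcription into the $L_P$ setting, with $H_P$ in place of $\log|z|$. One small remark: your appeal to Proposition~\ref{turgay2} for $V^*\in L_{P,+}$ is not really needed and slightly misplaced---the lower bound comes from the competitor $H_P+c$ as you say, while the upper bound (and plurisubharmonicity of $V^*$) follows directly from the nonpluripolarity of $\{w>0\}$ via $P\subset A\Sigma$; in the paper this membership $V^*\in L_{P,+}$ is stated as a \emph{consequence} of the proposition (Remark~\ref{compsupp}) rather than an input, but the argument you give for the support inclusion does not actually require the full $L_{P,+}$ statement, only local boundedness, so there is no circularity.
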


\begin{remark}\label{compsupp} It follows under the hypotheses of Proposition \ref{turgay3} that 
$$V_{P,K,Q}^*=V_{P,S_w,Q|_{S_w}}^*\in L_{P,+}.$$
\end{remark}

\begin{example} \label{torus} Let $P\subset (\RR^+)^d$ be a convex body and $K=T^d$, the unit $d-$torus in $\CC^d$. Then 
\begin{equation}\label{torusextr} V_{P,T^d}(z)= H_P(z)=\max_{J\in P} \log |z^J|\in L_{P,+}.\end{equation}
This is Example 2.3 in \cite{Bay}.

\end{example}

\begin{remark} The results (and proofs) of Propositions \ref{turgay1}, \ref{turgay2} and \ref{turgay3}, as well as Example \ref{torus}, are valid for $P\subset (\RR^+)^d$ a convex body; some were stated in \cite{Bay} only in the case of $P\subset \RR^d$ a non-degenerate convex polytope. An alternate proof of (\ref{torusextr}) can be found in \cite{BosLev}. Further explicit examples of weighted $P-$extremal functions and their Monge-Amp\`ere measures can be found in \cite{Bay}.

\end{remark}

The proof of Theorem 2.6 in Appendix B of \cite{ST}, which uses a domination principle (Theorem 1.11 in Appendix B of \cite{ST}), is valid to obtain the following result. 

\begin{proposition} \label{turgay4} Let $P\subset (\RR^+)^d$ be a convex body, $K\subset \CC^d$ be closed, and let $w=e^{-Q}$ be  an admissible weight on $K$. Then for $p_n\in Poly(nP)$ with $w(z)^n|p_n(z)|\leq M$ q.e. $z\in S_w$, 
\begin{equation}\label{bwestpn} |p_n(z)|\leq M\exp(nV_{P,K,Q}^*(z)), \ z\in \CC^d \end{equation}
and
$$w(z)^n|p_n(z)|\leq M\exp[n(V_{P,K,Q}^*(z)-Q(z))], \ z\in K.$$
Hence $w(z)^n|p_n(z)|\leq M$ q.e. $z\in K$.
\end{proposition}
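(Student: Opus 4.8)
The plan is to mimic the proof of Theorem 2.6 in Appendix~B of \cite{ST}. Set $u(z):=\frac{1}{n}\log|p_n(z)|-\frac{1}{n}\log M$; since $p_n\in Poly(nP)$ we have $\frac1n\log|p_n|\in L_P(\CC^d)$, so $u\in L_P(\CC^d)$. Taking logarithms in the hypothesis $w(z)^n|p_n(z)|\le M$ and dividing by $n$, the hypothesis reads precisely that $u\le Q$ quasi-everywhere on $S_w$. The entire proof reduces to upgrading this to the global inequality $u\le V^*_{P,K,Q}$ on $\CC^d$: once that is in hand, (\ref{bwestpn}) follows by exponentiating, the second display follows by multiplying through by $w^n=e^{-nQ}$, and the final assertion by the observation that $V^*_{P,K,Q}\le Q$ q.e.\ on $K$.

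First I would pass from $S_w$ to the measure $\mu:=(dd^cV^*_{P,K,Q})^d$. By Remark \ref{compsupp}, $V^*_{P,K,Q}\in L_{P,+}$, so it is locally bounded and $\mu$ is a well-defined positive measure — supported on $S_w$ by definition, and, by Bedford--Taylor, putting no mass on pluripolar sets. By Proposition \ref{turgay3}, $V^*_{P,K,Q}=Q$ quasi-everywhere on $S_w$; combining this with $u\le Q$ q.e.\ on $S_w$, the set $\{u>V^*_{P,K,Q}\}$ meets $S_w$ in a pluripolar set, and is disjoint from $\mathrm{supp}\,\mu$ off $S_w$. Since $\mu$ charges neither, $u\le V^*_{P,K,Q}$ holds $\mu$-almost everywhere.

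The key step is then the domination principle (Theorem~1.11 in Appendix~B of \cite{ST}), which I would invoke in the form adapted to our classes: if $u\in L_P(\CC^d)$, $v\in L_{P,+}(\CC^d)$ and $u\le v$ holds $(dd^cv)^d$-a.e., then $u\le v$ on all of $\CC^d$. The comparison-principle argument of \cite{ST} carries over without change: the only ``global'' input is that all competitors in $L_{P,+}$ share the same total Monge--Amp\`ere mass $\int_{\CC^d}(dd^cv)^d=n_d$ (Proposition \ref{turgay1} together with Remark \ref{totalMA}), which is exactly what lets the comparison principle force the a~priori $(dd^cv)^d$-null open set $\{u>v\}$ to be empty. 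Applying this with $v=V^*_{P,K,Q}$ yields $u\le V^*_{P,K,Q}$ on $\CC^d$, i.e.\ $|p_n(z)|\le M\exp(nV^*_{P,K,Q}(z))$, which is (\ref{bwestpn}).

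For the remaining statements, multiplying (\ref{bwestpn}) by $w(z)^n=e^{-nQ(z)}$ gives $w(z)^n|p_n(z)|\le M\exp[n(V^*_{P,K,Q}(z)-Q(z))]$ for $z\in K$. Finally, $V_{P,K,Q}\le Q$ on $K$ by the very definition of the weighted $P$-extremal function, and $V_{P,K,Q}=V^*_{P,K,Q}$ quasi-everywhere by the standard negligibility property of the upper regularization; hence $V^*_{P,K,Q}\le Q$ q.e.\ on $K$, and therefore $w(z)^n|p_n(z)|\le M$ q.e.\ on $K$. I expect the only genuine obstacle to be the domination-principle step: one must check that it is the comparison-principle \emph{proof} of \cite{ST} that is being used, so that replacing $L,L^+$ by $L_P,L_{P,+}$ and the normalizing constant for $\Sigma$ by $n_d=d!\,Vol(P)$ leaves every line valid.
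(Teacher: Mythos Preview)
Your proposal is correct and matches the paper's approach exactly: the paper itself does not write out a proof but simply states that ``the proof of Theorem 2.6 in Appendix B of \cite{ST}, which uses a domination principle (Theorem 1.11 in Appendix B of \cite{ST}), is valid to obtain the following result,'' and you have faithfully expanded precisely that argument. Your identification of the one point requiring care --- that the domination principle for $L_P,L_{P,+}$ rests on the equal-mass statement of Proposition~\ref{turgay1} and Remark~\ref{totalMA} in place of the standard $L,L^+$ normalization --- is exactly the adaptation needed.
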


For $K\subset \CC^d$ compact, $w=e^{- Q}$ an admissible weight function on $K$, and $\nu$ a finite measure on $K$, we say that the triple $(K,\nu,Q)$ satisfies a weighted Bernstein-Markov property if for all $p_n\in \mathcal P_n$, 
\begin{equation}\label{wtdbm}||w^np_n||_K \leq M_n ||w^np_n||_{L^2(\nu)} \ \hbox{with} \ \limsup_{n\to \infty} M_n^{1/n} =1.\end{equation}
Here, $||w^np_n||_K:=\sup_{z\in K} |w(z)^np_n(z)|$ and 
\begin{equation}\label{wtdnorm} ||w^np_n||_{L^2(\nu)}^2:=\int_K |p_n(z)|^2  w(z)^{2n} d\nu(z).\end{equation} 
For $K$ closed but unbounded, we allow $\nu$ to be locally finite. In this setting, if $\nu(K)=\infty$ we must assume the weighted $L^2-$norms in (\ref{wtdnorm}) are finite. Next, following \cite{Bay}, given $P\subset (\RR^+)^d$ a convex body, we say that a finite measure $\nu$ with support in a compact set $K$ is a Bernstein-Markov measure for the triple $(P,K,Q)$ if (\ref{wtdbm}) holds for all $p_n\in Poly(nP)$. Again for $K$ closed but unbounded, if $\nu(K)=\infty$ we must assume the weighted $L^2-$norms in (\ref{wtdnorm}) are finite. 

\begin{remark}\label{ap} Since for any $P$ there exists $A=A(P)>0$ with $Poly(nP) \subset \mathcal P_{An}$ for all $n$, if $(K,\nu,Q)$ satisfies a weighted Bernstein-Markov property, then $\nu$ is a Bernstein-Markov measure for the triple $(P,K,\tilde Q)$ where $\tilde Q = AQ$. In particular, if $\nu$ is a {\it strong Bernstein-Markov measure} for $K$; i.e., if $\nu$ is a weighted Bernstein-Markov measure for any $Q\in C(K)$, then for any such $Q$, $\nu$ is a Bernstein-Markov measure for the triple $(P,K,Q)$. 
 \end{remark}

\begin{remark}\label{torus2} In Example \ref{torus}, the monomials $z^J, \ J\in nP\cap (\ZZ^+)^d$, form an orthonormal basis for $Poly(nP)$ with respect to normalized Haar measure $\mu_T$ on $T^d$. Moreover, $\mu_T$ is a strong Bernstein-Markov measure for $T$ and hence it is a Bernstein-Markov measure for the triple $(P,T,Q)$ for any $Q\in C(T)$. \end{remark}

\noindent We refer to \cite{blpw} for a survey of Bernstein-Markov properties.

\subsection {Projection operator.} \label{projop} To emphasize the relation between the weight $Q$ and the weighted $P-$extremal function $V_{P,K,Q}^*$, we may write 
\begin{equation}
\label{pq}
\Pi(Q) =\Pi_K(Q) :=V_{P,K,Q}^*.
\end{equation}
This  operator $\Pi$ is increasing and concave: 
if $Q_1\leq Q_2$ are admissible weights on $K$, then $\Pi(Q_1)\leq \Pi(Q_2)$; and if $0\leq s \leq 1$ and $a,a'$ are admissible weights on $K$,
\begin{equation}
\label{projcon}
\Pi(sa + (1-s)a') \geq s\Pi(a) + (1-s)\Pi(a').
\end{equation}
Since $sa + (1-s)a'$ is a convex combination of $a, a'$, it is an admissible weight on $K$. Then (\ref{projcon}) follows since the right-hand-side is a competitor for the weighted $P-$extremal function on the left-hand-side. 

It follows from the definition of $\Pi$, Proposition \ref{turgay3}, and Remark \ref{compsupp} that $\Pi$ is Lipschitz on locally regular compacta. That is, if $a,b\in C(K)$ and $0\leq t\leq 1$ then on $\CC^d$, 
\begin{equation}
\label{loclip}
|\Pi(a+t(b-a)) -\Pi(a)|\leq Ct
\end{equation}
where $C=C(a,b)=\max[\sup_{D(0)}|b-a|,\sup_{D(t)}|b-a|]$. Here $D(t):=\{\Pi(a+t(b-a))=a+t(b-a)\}$. Similarly, if $u\in C(K)$, we have, for $t\in \RR$, 
\begin{equation}
\label{loclip2}
|\Pi(a+tu) -\Pi(a)|\leq C|t|
\end{equation}
where $C=C(u)=\sup_{K}|u|$. In the former case, if $K$ is unbounded, in order that $\max[\sup_{D(0)}|b-a|,\sup_{D(t)}|b-a|]$ is a {\it finite} constant which is independent of $t$, we assume that 
\begin{equation}
\label{unbhyp}
\cup_{0\leq t\leq 1} D(t) \ \hbox{is bounded and}  \ u:=b-a\in L^{\infty}(\cup_{0\leq t\leq 1} D(t)).
\end{equation}
Then (\ref{loclip}) holds. This observation will be used in the proof of Theorem \ref{keycn}. In both cases, if $K$ is compact, $C$ is finite.

Another result we will need is a comparison principle in $ L_{P,+}$; we state and prove the version we will use.

\begin{proposition} 
\label{L+comp}
Let $a_1,a_2\in  L_{P,+}$ and $b_1,b_2 \in L^+(\CC^d)$. For $M>0$, set $u_1:=a_1+Mb_1$ and $u_2:=a_2 +Mb_2$. Then
$$\int_{\{u_1 < u_2\}}(dd^cu_2)^d \leq \int_{\{u_1 < u_2\}}(dd^cu_1)^d.$$
\end{proposition}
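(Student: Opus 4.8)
The plan is to reduce this to the standard comparison principle for bounded plurisubharmonic functions, which holds on a bounded pseudoconvex domain (or equivalently, in a form adapted to unbounded settings, via the domination / comparison results already developed for the Lelong classes $L_P$). First I observe that both $u_1$ and $u_2$ lie in $L_{P,+}+ML^+(\CC^d)$; in particular each is locally bounded on $\CC^d$ (since elements of $L_{P,+}$ are bounded below and elements of $L^+$ are locally bounded), so the Monge--Amp\`ere measures $(dd^cu_i)^d$ are well-defined Borel measures. Moreover, comparing growth at infinity: $u_i = a_i + Mb_i$ with $a_i - H_P = O(1)$ and $b_i - \max_j\log^+|z_j| = O(1)$, so $u_1 - u_2 = (a_1-a_2) + M(b_1-b_2) = O(1)$ as $|z|\to\infty$. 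Thus the set $\{u_1<u_2\}$ where the two functions are compared has a "bounded overlap" structure in the sense that, while it need not itself be bounded, the difference is controlled; this is exactly the configuration handled by Proposition \ref{turgay1}-type comparison arguments (monotonicity of mixed Monge--Amp\`ere masses under $u_i\le v_i+C_i$).

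The key steps, in order, are as follows. First, fix $\varepsilon>0$ and replace $u_2$ by $u_2^\varepsilon := (1-\varepsilon)u_2 + \varepsilon(a_1 + Mb_1')$ or, more simply, shift to compare $u_1$ with $u_2 - \varepsilon$; the standard device is to note $\{u_1<u_2-\varepsilon\}\Subba \{u_1<u_2\}$ and $\bigcup_{\varepsilon>0}\{u_1<u_2-\varepsilon\}=\{u_1<u_2\}$, so it suffices to prove the inequality with $\{u_1<u_2\}$ replaced by the open set $\{u_1<u_2-\varepsilon\}$ and then let $\varepsilon\to 0$ using inner regularity of the measures. Second, on the open set $G_\varepsilon:=\{u_1<u_2-\varepsilon\}$, both functions are locally bounded psh; we want to invoke the local comparison principle $\int_{G_\varepsilon}(dd^cu_2)^d\le \int_{G_\varepsilon}(dd^cu_1)^d$. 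If $G_\varepsilon$ were bounded this is Bedford--Taylor directly (since $u_1\ge u_2-\varepsilon$ near $\partial G_\varepsilon$ and $u_1=u_2-\varepsilon$ on $\partial G_\varepsilon$). To handle the possibility that $G_\varepsilon$ is unbounded, I would use the growth estimate above: since $u_1-u_2\to O(1)$ we can choose the comparison so that outside a large ball $B_R$, $u_1 - u_2 > -\varepsilon/2$ say by further decreasing to $\{u_1<u_2-\varepsilon\}$ — actually the cleaner route is to truncate, replacing $u_2$ with $\max(u_2, u_1 + c)$ for suitable constant $c$ far out, which agrees with $u_2$ on $G_\varepsilon$ and makes the comparison set relatively compact, after which Bedford--Taylor's comparison theorem applies on a large ball and the boundary term vanishes because the two functions agree there.

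The main obstacle I anticipate is exactly this unboundedness of the comparison set $\{u_1<u_2\}$: the classical comparison principle is stated for relatively compact sublevel sets inside a fixed domain, and here $\CC^d$ is not bounded and $\{u_1<u_2\}$ may be unbounded. The resolution — and the technical heart of the argument — is the $O(1)$ control of $u_1-u_2$ at infinity coming from membership in $L_{P,+}$ and $L^+$, which lets one either (i) apply the monotonicity statement of Proposition \ref{turgay1} after a suitable regularization of $\max$-type, or (ii) exhaust $\CC^d$ by balls $B_R$ and check that the boundary contributions $\int_{\partial B_R\cap\{u_1<u_2\}}$ are negligible because, for $R$ large, $\{u_1<u_2\}\cap \partial B_R$ is controlled and one can arrange $u_1\ge u_2$ there by the small shift. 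Once the comparison set is made relatively compact, the remaining steps are the routine Bedford--Taylor integration-by-parts / Stokes argument, followed by the limit $\varepsilon\to0$.
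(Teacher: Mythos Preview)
Your overall strategy --- reduce to a bounded comparison set, apply the classical Bedford--Taylor comparison principle there, then pass to a limit --- is exactly the paper's. The gap is in the reduction step. An \emph{additive} shift $\{u_1<u_2-\varepsilon\}$ does not, in general, give a bounded set: you correctly observe that $u_1-u_2=O(1)$, but this only says the difference is bounded, not that it eventually exceeds $-\varepsilon$; indeed $u_1-u_2$ could equal $-100$ on an unbounded set for all you know. Your proposed fixes do not close this: Proposition~\ref{turgay1} controls total masses, not masses on sublevel sets, and the exhaustion-by-balls argument requires exactly the boundary condition $u_1\ge u_2$ on $\partial B_R$ that you cannot arrange by a small additive shift. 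The $\max$-truncation you sketch keeps $(dd^cu_2)^d$ unchanged on $G_\varepsilon$ but does not shrink $G_\varepsilon$ to something bounded.

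The device the paper uses instead is a \emph{multiplicative} perturbation: after adding a constant so that $u_1\ge 0$, consider $\{(1+\varepsilon)u_1<u_2\}$. On this set $\varepsilon\,u_1<u_2-u_1=O(1)$, and since $u_1\in L_{P,+}+ML^+(\CC^d)$ grows to $+\infty$ as $|z|\to\infty$, the set is bounded. The standard comparison theorem then gives
\[
\int_{\{(1+\varepsilon)u_1<u_2\}}(dd^cu_2)^d\le (1+\varepsilon)^d\int_{\{(1+\varepsilon)u_1<u_2\}}(dd^cu_1)^d,
\]
and since $\{(1+\varepsilon)u_1<u_2\}\uparrow\{u_1<u_2\}$ as $\varepsilon\downarrow 0$, monotone convergence finishes. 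The multiplicative scaling is the one idea you are missing; once you have it the rest of your outline is correct.
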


\noindent Note that the integrand may be unbounded but each integral is finite  by Proposition \ref{turgay1}.

\begin{proof} By adding a constant to $u_1$, if necessary, we may assume $u_1\geq 0$. Then for $\epsilon >0$, we have 
$$\{(1+\epsilon)u_1 < u_2\} \subset \{u_1 < u_2\}$$ 
and $\{(1+\epsilon)u_1 < u_2\}$ is bounded. By the standard comparison theorem for locally bounded psh functions on bounded domains (cf., Theorem 3.7.1, \cite{[K]}),
\begin{equation}
\label{compus}
\int_{\{(1+\epsilon)u_1 < u_2\}}(dd^cu_2)^d \leq (1+\epsilon)^d\int_{\{(1+\epsilon)u_1 < u_2\}}(dd^cu_1)^d.
\end{equation}
Clearly
$$\bigcup_{j=1}^{\infty} \{(1+1/j)u_1 < u_2\} = \{u_1 < u_2\}$$
so applying (\ref{compus}) with $\epsilon = 1/j$, the result follows by monotone convergence upon letting $j\to \infty$.
\end{proof}

The following lemma (and corollary) will be used in subsection \ref{diffep}.

\begin{lemma} 
\label{mamass}
Let $a$ be an admissible weight on a compact set $K$ and let $u\in C^2(K)$. Then
\begin{equation}
\lim_{t\to 0} \int_{D(0)\setminus D(t)} (dd^c\Pi(a))^d  =0
\end{equation}
where $D(t)=\{\Pi(a +tu)=a +tu\}$ for $t\in \RR$.
\end{lemma}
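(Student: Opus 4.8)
The plan is to show that the total Monge–Amp\`ere mass $\int_{\CC^d}(dd^c\Pi(a))^d$ is carried, in a suitable sense, by the contact set $D(t)$ for $t$ near $0$, so that the mass of the "lost" region $D(0)\setminus D(t)$ shrinks to $0$. Since all the functions $\Pi(a+tu)$ lie in $L_{P,+}$ (by Remark \ref{compsupp}), they have the same total Monge–Amp\`ere mass $n_d=d!\,\mathrm{Vol}(P)$ by Remark \ref{totalMA}. The first step is therefore to recall from Proposition \ref{turgay3} that $(dd^c\Pi(a))^d$ is supported on $D(0)=\{\Pi(a)=a\}$ and likewise $(dd^c\Pi(a+tu))^d$ is supported on $D(t)$; moreover on $D(t)$ the extremal function equals $a+tu$, which is $C^2$.

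Next I would use the comparison/monotonicity of Monge–Amp\`ere masses to compare $(dd^c\Pi(a))^d$ with $(dd^c\Pi(a+tu))^d$ on the overlap. Consider $t>0$ small (the case $t<0$ is symmetric). On $D(0)\cap D(t)$ we have $\Pi(a)=a$ and $\Pi(a+tu)=a+tu$; writing $u\in C^2(K)$ and $|u|\le \|u\|_K=:C$, one sees that $\Pi(a+tu)\le \Pi(a)+Ct$ (this is exactly the Lipschitz estimate \eqref{loclip2}), so the two extremal functions are uniformly close. The key idea is to localize: on a neighborhood of a point of $D(0)$ that is \emph{not} in $D(t)$, the function $\Pi(a)$ agrees with $a\in C^2$ and is strictly below $a+tu$-type competitors, forcing the Monge–Amp\`ere measure there to behave like $(dd^c a)^d$, a measure absolutely continuous with a bounded density. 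Concretely, I would argue that $\int_{D(0)\setminus D(t)}(dd^c\Pi(a))^d \le \int_{D(0)\setminus D(t)} (dd^c a)^d_{\text{(in the sense of the }C^2\text{ comparison)}}$, and this last quantity is the mass an absolutely continuous measure puts on a set whose measure (Lebesgue measure) goes to $0$.

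The cleanest route to "Lebesgue measure of $D(0)\setminus D(t)\to 0$'' is a domination/comparison argument rather than a direct set-convergence claim, because $D(0)\setminus D(t)$ need not shrink to a fixed null set pointwise. Instead: by Proposition \ref{L+comp} applied with $a_1,a_2$ suitable small perturbations of $\Pi(a)$ and $\Pi(a+tu)$ (adding $M b$ for $b\in L^+$ to make strict inequalities on open sets), one gets
$$\int_{\{\Pi(a)<\Pi(a+tu)-\delta\}}(dd^c\Pi(a+tu))^d \le \int_{\{\Pi(a)<\Pi(a+tu)-\delta\}}(dd^c\Pi(a))^d,$$
and the reverse-type inequality with the roles swapped; combining these with the equality of total masses $n_d$ squeezes the masses of the symmetric-difference regions. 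Letting $t\to0$, the sets $\{|\Pi(a)-\Pi(a+tu)|>\delta\}$ shrink (by \eqref{loclip2} they are empty once $Ct<\delta$), and then $\delta\to0$ handles the thin transition layer, using that the measure $(dd^c\Pi(a))^d$ puts no mass on pluripolar sets and that on $D(0)$ the competitor structure pins $\Pi(a)$ to the $C^2$ function $a$.

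The main obstacle I anticipate is making rigorous the passage from "the extremal functions are uniformly $C|t|$-close'' to "the Monge–Amp\`ere mass on the set where $D(0)$ fails to be contained in $D(t)$ is small.'' The subtlety is that $(dd^c\Pi(a))^d$ can have singular pieces (it need only be a positive measure of total mass $n_d$), so one cannot simply invoke absolute continuity; the resolution is to exploit that \emph{on the contact set} $D(0)$, $\Pi(a)$ coincides with the $C^2$ function $a$, hence near an interior point of $D(0)$ the Monge–Amp\`ere measure is locally $(dd^c a)^d$, which \emph{is} absolutely continuous, and to handle the boundary $\partial D(0)$ (a set of measure zero) separately. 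Combining this local regularity with the comparison-principle squeeze and the conservation of total mass $n_d$ gives the stated limit.
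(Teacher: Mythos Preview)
Your proposal has a genuine gap. The central difficulty is that you repeatedly invoke regularity of $a$: you write that on $D(0)$ the function $\Pi(a)$ ``agrees with $a\in C^2$'', and conclude that $(dd^c\Pi(a))^d$ is locally $(dd^c a)^d$ and hence absolutely continuous. But the hypothesis is only that $a$ is an \emph{admissible weight} (lower semicontinuous); the $C^2$ regularity is on $u$, not $a$. So $(dd^c a)^d$ need not make sense, and $(dd^c\Pi(a))^d$ can very well be singular with respect to Lebesgue measure on $D(0)$. The absolute-continuity route therefore collapses. Your comparison/squeezing sketch is also not doing real work: by the Lipschitz bound \eqref{loclip2}, the sets $\{|\Pi(a)-\Pi(a+tu)|>\delta\}$ are empty once $C|t|<\delta$, so those inequalities carry no information about $D(0)\setminus D(t)$, which is defined by the contact conditions $\Pi(a)=a$, $\Pi(a+tu)<a+tu$ rather than by a gap between the two extremal functions.

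The paper's argument uses the $C^2$ hypothesis on $u$ in a quite different and essential way. One first observes the set inclusion
\[
D(0)\setminus D(t)\subset S:=\{\Pi(a+tu)<\Pi(a)+tu\},\qquad D(t)\cap S=\emptyset.
\]
The obstacle to applying the comparison principle directly to $S$ is that $\Pi(a)+tu$ is not psh. This is where $u\in C^2$ enters: choosing $M$ large so that $u+M\psi$ is psh (with $\psi(z)=\tfrac12\log(1+|z|^2)$), one rewrites $S=\{\Pi(a+tu)+tM\psi<\Pi(a)+t(u+M\psi)\}$, which is now a legitimate comparison set for Proposition~\ref{L+comp}. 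Since adding the psh function $t(u+M\psi)$ only increases Monge--Amp\`ere mass, and the comparison principle then bounds the integral over $S$ by $\int_S(dd^c(\Pi(a+tu)+tM\psi))^d$, the fact that $(dd^c\Pi(a+tu))^d$ is supported in $D(t)$ (disjoint from $S$) kills the leading term, leaving only the $O(t)$ cross-terms from expanding the binomial. This yields $\int_{D(0)\setminus D(t)}(dd^c\Pi(a))^d=O(t)$, which is stronger than the stated limit. Your proposal never isolates the set $S$ or exploits $u\in C^2$ to produce a psh perturbation, and that is the missing idea.
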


\begin{proof} The hypothesis $u\in C^2(K)$ means that $u$ is the restriction to $K$ of a $C^2$ function (which we also denote by $u$) on $\CC^d$; clearly we can take this function to have compact support. We prove the result for $t>0$; i.e $t\to 0^+$. We can find $M>0$ sufficiently large depending on $u$ and its support so that $u+M\psi$ is psh where $\psi(z) = \frac{1}{2} \log (1+|z|^2)$. Observing that 
$$D(0)\setminus D(t)\subset S$$
where 
$$S:=\{\Pi(a+tu)< \Pi(a) +tu\}=\{\Pi(a+tu)+tM\psi < \Pi(a) + t(u+M\psi)\}$$
and
$$D(t) \cap \{\Pi(a+tu)< \Pi(a) +tu\}=\emptyset,$$
we have
$$\int_{D(0)\setminus D(t)} (dd^c\Pi(a))^d  \leq \int_S (dd^c\Pi(a))^d $$
$$\leq \int_S [dd^c(\Pi(a)+t(u+M\psi)]^d\leq \int_S [dd^c(\Pi(a+tu)+tM\psi)]^d$$
$$=\int_S [dd^c(\Pi(a+tu))]^d +0(t) =0(t).$$
Here, the inequality in the second line comes from Proposition \ref{L+comp} (with $M\to tM$).
\end{proof}

\begin{corollary} 
\label{mamassbis}
Let $a,b\in C^2(E)$ be admissible weights on a closed, unbounded set $E$. If (\ref{unbhyp}) holds then
\begin{equation}
\lim_{t\to 0} \int_{D(0)\setminus D(t)} (dd^c\Pi(a))^d  =0
\end{equation}
where $D(t)=\{\Pi(a +t(b-a))=a +t(b-a)\}$ for $0\leq t\leq 1$.
\end{corollary}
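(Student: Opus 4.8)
The plan is to reduce Corollary~\ref{mamassbis} to Lemma~\ref{mamass} by re-running the same argument, being careful only about the two places where the compactness of $K$ was implicitly used: the global smoothing term coming from $M\psi$, and the finiteness of the relevant suprema. Write $u:=b-a$, which is $C^2$ on $E$ and hence (taking a $C^2$ extension, no longer necessarily compactly supported) on $\CC^d$. The first step is to note that since $u$ is $C^2$ there is a constant $M>0$ with $\mathrm{Hess}(u) \geq -M\,\mathrm{Hess}(\psi)$ on $\CC^d$, where $\psi(z)=\frac12\log(1+|z|^2)$; hence $u+M\psi$ is psh on $\CC^d$, and moreover $u+M\psi\in L^+(\CC^d)$ after adding a constant, so that the comparison principle Proposition~\ref{L+comp} applies to the pairs $\Pi(a)+t(u+M\psi)$ and $\Pi(a+tu)+tM\psi$ exactly as in the proof of Lemma~\ref{mamass}. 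Note here $\Pi(a+tu)=\Pi(a+t(b-a))$, so $D(t)$ as defined in the corollary is the set $\{\Pi(a+tu)=a+tu\}$, matching the lemma's notation with $tu$ in place of $tu$.

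The second step is the inclusion $D(0)\setminus D(t)\subset S$ with $S:=\{\Pi(a+tu)<\Pi(a)+tu\}=\{\Pi(a+tu)+tM\psi<\Pi(a)+t(u+M\psi)\}$, together with $D(t)\cap\{\Pi(a+tu)<\Pi(a)+tu\}=\emptyset$; these are purely pointwise and do not use compactness. Then the chain of inequalities is
\begin{equation*}
\int_{D(0)\setminus D(t)}(dd^c\Pi(a))^d \leq \int_S (dd^c\Pi(a))^d \leq \int_S [dd^c(\Pi(a)+t(u+M\psi))]^d \leq \int_S [dd^c(\Pi(a+tu)+tM\psi)]^d,
\end{equation*}
where the first inner inequality uses that $dd^c(\Pi(a)+t(u+M\psi))\geq dd^c\Pi(a)$ as positive currents (since $u+M\psi$ is psh) and positivity of mixed Monge--Amp\`ere measures, and the second is Proposition~\ref{L+comp} applied on $S$ with $a_1=\Pi(a+tu)$, $b_1=M\psi$, $a_2=\Pi(a)$, $b_2=u+M\psi$ (both in $L_{P,+}$ resp.\ $L^+$, using Remark~\ref{compsupp} that $\Pi(\cdot)\in L_{P,+}$; the $M$ of that proposition is $t$). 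Expanding the last integrand by the binomial/multilinearity of $dd^c$ gives $[dd^c\Pi(a+tu)]^d$ plus $d$ mixed terms each carrying at least one factor of $t\,dd^c(M\psi)$; by Proposition~\ref{turgay1} each such mixed mass is a finite constant (bounded by an $M_k$ depending only on $d$ and the convex bodies, since $\psi\in L^+$ and $\Pi(a+tu)\in L_{P,+}$ for all $t\in[0,1]$ with the hypothesis~(\ref{unbhyp}) ensuring the family stays controlled), so the sum of mixed terms is $O(t)$. Finally $\int_S[dd^c\Pi(a+tu)]^d\to 0$ as $t\to0$: indeed $\int_{\CC^d}[dd^c\Pi(a+tu)]^d=n_d$ is constant by (\ref{norm}), the measures $[dd^c\Pi(a+tu)]^d$ are supported in the bounded set $\cup_{0\le t\le1}D(t)$ (hypothesis~(\ref{unbhyp})), and by the Lipschitz estimate (\ref{loclip}) together with~(\ref{unbhyp}) one has $\Pi(a+tu)\to\Pi(a)$ locally uniformly, whence $[dd^c\Pi(a+tu)]^d\to[dd^c\Pi(a)]^d$ weakly; since $\overline{S}\subset\{\Pi(a+tu)\leq\Pi(a)+tu\}$ shrinks appropriately and $\Pi(a)$ puts no mass on its own non-contact set, a standard upper-semicontinuity argument for weak convergence on the shrinking Borel sets $S$ gives $\limsup_{t\to0}\int_S[dd^c\Pi(a+tu)]^d=0$. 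Combining, $\int_{D(0)\setminus D(t)}(dd^c\Pi(a))^d\to0$.

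The main obstacle, compared to the compact case, is precisely the control at infinity: one must make sure the smoothing function $M\psi$ can be chosen globally (this is fine since $u\in C^2$ forces a uniform lower Hessian bound against the strictly plurisubharmonic $\psi$), and that all the Monge--Amp\`ere masses appearing, in particular the mixed ones and $\int_S[dd^c\Pi(a+tu)]^d$, remain finite and uniformly bounded in $t$ — which is exactly what hypothesis~(\ref{unbhyp}) is designed to furnish, via Proposition~\ref{turgay1}, Remark~\ref{totalMA}, and the boundedness of $\cup_{0\le t\le1}D(t)$. Once those are in hand the argument is verbatim that of Lemma~\ref{mamass}, so I would present the corollary's proof simply as: ``The proof is identical to that of Lemma~\ref{mamass}, replacing $tu$ by $t(b-a)$ and using hypothesis~(\ref{unbhyp}) to guarantee that $M$ may be chosen globally and that the Monge--Amp\`ere masses in the displayed estimate are finite, uniformly in $t\in[0,1]$.''
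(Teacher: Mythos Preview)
Your argument has a genuine gap at the very first step. You assert that because $u=b-a$ is $C^2$ on $\CC^d$, there exists $M>0$ with $u+M\psi$ plurisubharmonic globally. This is false: the complex Hessian of $\psi(z)=\frac12\log(1+|z|^2)$ decays like $|z|^{-2}$ at infinity, while a $C^2$ function on $\CC^d$ that is not compactly supported may have a Hessian that does not decay, or that is not even bounded below. In Lemma~\ref{mamass} this step works precisely because $K$ is compact, so one may choose the extension of $u$ to have compact support; outside that support $u+M\psi=M\psi$ is automatically psh, and on the compact support a finite $M$ suffices. With $E$ unbounded you have no such control, and hypothesis~(\ref{unbhyp}) by itself says nothing about the Hessian of $u$ off the bounded set $\bigcup_t D(t)$. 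For the same reason your claim that $u+M\psi\in L^+(\CC^d)$, needed to invoke Proposition~\ref{L+comp}, is unjustified.

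The paper repairs this not by controlling $u$ at infinity, but by observing that under~(\ref{unbhyp}) the extremal functions $\Pi(a+t(b-a))$ for $0\le t\le 1$ are unchanged if one modifies $a$ and $b$ outside a large ball containing $\bigcup_t D(t)$ (since each $\Pi(a+t(b-a))$ is already the extremal function of the compact support $S_{w_t}\subset D(t)$; cf.\ Remark~\ref{compsupp}). One may therefore assume $a=b$ outside that ball, so $u=b-a$ has compact support, and then Lemma~\ref{mamass} applies verbatim. This reduction is the whole content of the corollary's proof; your attempt to re-run the lemma's estimates directly misses it. As a minor separate point, your handling of $\int_S(dd^c\Pi(a+tu))^d$ is over-elaborate: since $(dd^c\Pi(a+tu))^d$ is supported in $D(t)$ and you already recorded $S\cap D(t)=\emptyset$, that integral is simply zero.
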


\begin{proof} First of all, $(dd^c\Pi(a))^d$ has compact support. Also, by (\ref{unbhyp}), the $P-$extremal functions $\Pi(a+t(b-a))$ for all $0\leq t\leq 1$ are independent of the values of $a,b$ outside a large ball. Thus  we may assume that $a=b$ outside a fixed ball. In other words, this  case is reduced to the case of Lemma \ref{mamass} where $u=b-a$.
\end{proof}

\begin{remark} For the remainder of this paper, $K$ will always denote a {\it compact} subset of $\CC^d$ while $E$ will be used for a closed but possibly unbounded subset.

\end{remark}

\subsection{Transfinite diameter.}

Recall $d_n$ is the dimension of $Poly(nP)$. We can write
$$Poly(nP)= \hbox{span} \{e_1,...,e_{d_n}\}$$ 
where $\{e_j(z):=z^{\alpha(j)}\}_{j=1,...,d_n}$ are the appropriate standard basis monomials. For 
points $\zeta_1,...,\zeta_{d_n}\in \CC^d$, let
\begin{equation} \label{vdm}VDM(\zeta_1,...,\zeta_{d_n}):=\det [e_i(\zeta_j)]_{i,j=1,...,d_n}  \end{equation}
$$= \det
\left[
\begin{array}{ccccc}
 e_1(\zeta_1) &e_1(\zeta_2) &\ldots  &e_1(\zeta_{d_n})\\
  \vdots  & \vdots & \ddots  & \vdots \\
e_{d_n}(\zeta_1) &e_{d_n}(\zeta_2) &\ldots  &e_{d_n}(\zeta_{d_n})
\end{array}
\right]$$
and for a compact subset $K\subset \CC^d$ let
$$V_n =V_n(K):=\max_{\zeta_1,...,\zeta_{d_n}\in K}|VDM(\zeta_1,...,\zeta_{d_n})|.$$
We will show later that the limit
\begin{equation} \label{tdlim} \delta(K):=\delta(K,P):= \lim_{n\to \infty}V_{n}^{1/l_n} \end{equation} exists where $l_n$ is the sum of the degrees of a set of these basis monomials for $ Poly(nP)$. We call $\delta(K)$ the {\it $P-$transfinite diameter} of $K$. More generally, let $w$ be an admissible weight function on
$K$. Given $\zeta_1,...,\zeta_{d_n}\in K$, let
$$W(\zeta_1,...,\zeta_{d_n}):=VDM(\zeta_1,...,\zeta_{d_n})w(\zeta_1)^{n}\cdots w(\zeta_{d_n})^{n}$$
$$= \det
\left[
\begin{array}{ccccc}
 e_1(\zeta_1) &e_1(\zeta_2) &\ldots  &e_1(\zeta_{d_n})\\
  \vdots  & \vdots & \ddots  & \vdots \\
e_{d_n}(\zeta_1) &e_{d_n}(\zeta_2) &\ldots  &e_{d_n}(\zeta_{d_n})
\end{array}
\right]\cdot w(\zeta_1)^{n}\cdots w(\zeta_{d_n})^{n}$$
be a {\it weighted Vandermonde determinant}. Let
\begin{equation} \label{wn} W_n(K):=\max_{\zeta_1,...,\zeta_{d_n}\in K}|W(\zeta_1,...,\zeta_{d_n})|
\end{equation}
and define an {\it $n-$th weighted $P-$Fekete set for $K$ and $w$} to be a set of $d_n$ points $\zeta_1,...,\zeta_ {d_n}\in K$ with the property that
$$|W(\zeta_1,...,\zeta_{d_n})|=W_n(K).$$
We also write $\delta^{w,n}(K):=W_n(K)^{1/l_n}$ and we will show, more generally, that the {\it weighted $P-$transfinite diameter} 
\begin{equation} \label{deltaw} \delta^w(K):=\delta^w(K,P):=\lim_{n\to \infty}\delta^{w,n}(K):=\lim_{n\to \infty}W_{n}(K)^{1/l_n} \end{equation}
exists. For each $n$,  if we take points $z_1^{(n)},z_2^{(n)},\cdots,z_{d_n}^{(n)}\in K$ for which 
\begin{equation}\label{wam}
 \lim_{n\to \infty}\bigl[|VDM(z_1^{(n)},\cdots,z_{d_n}^{(n)})|w(z_1^{(n)})^nw(z_2^{(n)})^n\cdots w(z_{d_n}^{(n)})^n\bigr]^{{1\over  l_n}}=\delta^w(K)
\end{equation}
-- we call these {\it asymptotically weighted $P-$Fekete arrays} -- and we let $\mu_n:= \frac{1}{d_n}\sum_{j=1}^{d_n} \delta_{z_j^{(n)}}$, one of our results, Corollary \ref{asympwtdfek}, is that 
$$
\mu_n \to  \frac{1}{n_d}(dd^c\Pi(Q))^d \ \hbox{weak}-*.
$$
(recall (\ref{norm})).

\begin{remark}\label{numbers} For $P=\Sigma$ so that $Poly(n\Sigma)=\mathcal P_n$, we have 
$$d_n(\Sigma)={d+n \choose d}=0(n^d/d!) \ \hbox{and} \ l_n(\Sigma)= \frac{d}{d+1}nd_n(\Sigma)$$
In particular, 
$$\frac{l_n (\Sigma)}{d_n(\Sigma)}=  \frac{nd}{d+1}.$$
For a general convex body $P\subset (\RR^+)^d$ with $A>0$ so that $P \subset A\Sigma$, we write
\begin{equation}\label{lndn}
l_n = f_n(d)  \frac{nd}{d+1}d_n = f_n(d) \frac{l_n (\Sigma)}{d_n(\Sigma)}d_n.
\end{equation}
We will need to know that $l_n/d_n$ divided by $l_n (\Sigma)/d_n(\Sigma)$ has a limit; i.e., that
\begin{equation}\label{key}\lim_{n\to \infty} f_n(d)=:\mathcal A=\mathcal A(P,d)\end{equation}
exists. 
It suffices to verify (\ref{key}) for $P\subset (\RR^+)^d$ a non-degenerate convex polytope. It follows from Theorem 2 of Lecture 2 in \cite{V} 
\begin{enumerate}
\item applied to $f(j_1,...,j_d)\equiv 1$ that $d_n$ is a polynomial of degree $d$ in $n$ with 
$$d_n = Vol(P)n^d+0(n^{d-1}); \ \hbox{and}$$
\item applied to $f(j_1,...,j_d)=j_1+\cdots +j_d$ that $l_n$ is a polynomial of degree $d+1$ in $n$ with 
$$l_n = C_Pn^{d+1}+0(n^{d})$$
where $C_P=\int_P (x_1+\cdots + x_d)dx_1 \cdots dx_d$.
\end{enumerate}

\noindent Thus 
$$l_n/d_n = \frac{C_Pn^{d+1}+0(n^{d})}{Vol(P)n^d+0(n^{d-1})}=\frac{nC_P}{Vol(P)} +0(1)$$
which proves (\ref{key}):
$$f_n(d)=\frac{(d+1)l_n}{ndd_n}= \frac{(d+1)}{d} \frac{l_n}{nd_n} \to  \frac{(d+1)}{d} \frac{C_P}{Vol(P)}.$$

\end{remark}

\subsection {Gram matrices and $P-$optimal measures.} \label{gramsec}  Let $E\subset \CC^d$ be closed and let $w$ be an admissible weight on $E$. We take $\mu$ a locally finite measure on $E$ and for each $n$ we define a weighted inner product on $Poly(nP)$:
\begin{equation}\label{Wip}
\langle f,g\rangle_{\mu,w}:=\int_E f(z)\overline{g(z)}w(z)^{2n}d\mu.
\end{equation}
We assume that $||f||_{L^2(w^nd\mu)}^2=\langle f,f\rangle_{\mu,w}<\infty$ for all $f\in Poly(nP)$ and that (\ref{Wip}) is non-degenerate in the sense that $||f||_{L^2(w^nd\mu)}=0$ implies $f\equiv 0$. Fixing a basis $\beta_n=\{p_1,p_2,\cdots, p_{d_n}\}$ of $Poly(nP)$ we form the Gram matrix
$$
G_n^{\mu,w}=G_n^{\mu,w}(\beta_n):=[\langle p_i,p_j\rangle_{\mu,w}]\in\CC^{d_n\times d_n}
$$
and the associated $n-$th Bergman function
\begin{equation}\label{WKn}
B_n^{\mu,w}(z):=\sum_{j=1}^{d_n}|q_j(z)|^2w(z)^{2n}
\end{equation}
where $Q_n=\{q_1,q_2,\cdots,q_{d_n}\}$ is an orthonormal basis for $Poly(nP)$ with respect
to the inner-product (\ref{Wip}). We make an observation which will be used in Lemma \ref{1stderiv} below. With this basis $\beta_n$, if we write
\begin{equation}\label{P}
P(z)=\left[\begin{array}{c}p_1(z)\cr p_2(z)\cr\cdot\cr\cdot\cr p_{d_n}(z)\end{array}\right]\in \CC^{ d_n}
\end{equation}
then 
\begin{equation}
\label{useful}
w(z)^{2n}P(z)^*\bigl(G_n^{\mu,w}(\beta_n)\bigr)^{-1}P(z)=B_n^{\mu,w}(z).
\end{equation}
To see this, $G:=G_n^{\mu,w}(\beta_n)$ and $G^{-1}$ are positive definite, Hermitian matrices; hence $G^{1/2}, \ G^{-1/2}:=(G^{-1})^{1/2}$ exist; writing $P:=P(z)$, we have
$$P^*G^{-1}P = P^*G^{-1/2}G^{-1/2}P=(G^{-1/2}P)^*G^{-1/2}P.$$
To verify that $w(z)^{2n}$ times the right-hand-side yields $B_n^{\mu,w}(z)$, note that since $G=\int_E PP^*w^{2n}d\mu$, the polynomials $\{\tilde p_1,\tilde p_2,\cdots, \tilde p_{d_n}\}$ defined by
\begin{equation}
G^{-1/2}P:=\left[\begin{array}{c}\tilde p_1(z)\cr 
\tilde p_2(z)\cr\cdot\cr\cdot\cr \tilde p_{d_n}(z)\end{array}\right]\in \CC^{ d_n}
\end{equation}
form an {\it orthonormal} basis for $Poly(nP)$ in $L^2(\mu)$: for
$$\int_E G^{-1/2}P \cdot (G^{-1/2}P)^*w^{2n}d\mu =  G^{-1/2}\bigl[\int_E PP^* w^{2n}d\mu \bigr] G^{1/2}$$
$$ = G^{-1/2} G G^{1/2}=I,$$
the $d_n\times d_n$ identity matrix. Thus 
$$B_n^{\mu,w}(z)= \sum_{j=1}^{d_n} |\tilde p_j(z)|^2w(z)^{2n} = w^{2n}(G^{-1/2}P)^*G^{-1/2}P.$$

Given $E$, and $w$ on $E$, for a function $u\in C(E)$, we consider the weight $w_t(z):=w(z)\exp(-tu(z)),$ $t\in\RR$. Apriori, $w_t$ need not be admissible. Let $\{\mu_n\}$ be a sequence of measures on $E$. Fixing a basis $\beta_n:=\{p_1,...,p_{d_n}\}$ of $Poly(nP)$, we set
\begin{equation}\label{fn}
f_n(t):=-{1\over 2l_n}\log\,{\rm det}(G_n^{\mu_n,w_t})
\end{equation}
where $G_n^{\mu_n,w_t}=G_n^{\mu_n,w_t}(\beta_n)$. We have the following result (Lemma 5.1 in \cite{[BB]} or Lemma 3.5 in \cite{bblw}) which will be used to prove Theorem \ref{keycn}.

\begin{lemma} \label{1stderiv} Suppose $w_t$ is admissible for $t$ in an interval containing $0$. For such $t$, we have
\[ f_n'(t)={n\over l_n}\int_E u(z)B_n^{\mu_n,w_t}(z)d\mu_n.\]
\end{lemma}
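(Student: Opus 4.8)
The plan is to compute $f_n'(t)$ directly from the formula $f_n(t) = -\frac{1}{2l_n}\log\det(G_n^{\mu_n, w_t})$ using Jacobi's formula for the derivative of a log-determinant, namely $\frac{d}{dt}\log\det(G(t)) = \mathrm{tr}\bigl(G(t)^{-1} G'(t)\bigr)$, and then identify the resulting trace as the claimed integral via the Bergman kernel identity \eqref{useful}. First I would write out the Gram matrix entrywise: with the fixed basis $\beta_n = \{p_1,\dots,p_{d_n}\}$ and $w_t = w e^{-tu}$, we have $(G_n^{\mu_n,w_t})_{ij} = \langle p_i,p_j\rangle_{\mu_n,w_t} = \int_E p_i(z)\overline{p_j(z)} w(z)^{2n} e^{-2ntu(z)}\, d\mu_n(z)$. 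Since this is an absolutely convergent integral with a smooth ($C^\infty$ in $t$) integrand dominated near any fixed $t$ in the admissibility interval (using $u\in C(E)$ bounded on the compact support considerations, or the finiteness of the weighted $L^2$ norms), differentiation under the integral sign is justified and gives $\frac{d}{dt}(G_n^{\mu_n,w_t})_{ij} = -2n\int_E u(z) p_i(z)\overline{p_j(z)} w(z)^{2n} e^{-2ntu(z)}\, d\mu_n(z)$. In matrix form, $\frac{d}{dt}G_n^{\mu_n,w_t} = -2n\int_E u(z)\, \overline{P(z)}\, P(z)^T \cdots$ — more precisely, writing the outer-product matrix $P(z)P(z)^*$ whose $(i,j)$ entry is $p_i(z)\overline{p_j(z)}$, we get $G_n'(t) = -2n\int_E u(z)\, P(z)P(z)^*\, w(z)^{2n} e^{-2ntu(z)}\, d\mu_n(z)$ (note $w_t^{2n} = w^{2n}e^{-2ntu}$).

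Next I would apply Jacobi's formula:
\begin{equation*}
f_n'(t) = -\frac{1}{2l_n}\,\mathrm{tr}\Bigl((G_n^{\mu_n,w_t})^{-1} G_n'(t)\Bigr) = -\frac{1}{2l_n}\,\mathrm{tr}\Bigl((G_n^{\mu_n,w_t})^{-1}\bigl(-2n\int_E u\, PP^* w_t^{2n}\, d\mu_n\bigr)\Bigr).
\end{equation*}
Pulling the constant $-2n$ out, cancelling the factor $2$, and moving the trace inside the (finite-dimensional, finite-measure) integral by linearity and continuity of the trace, this becomes $f_n'(t) = \frac{n}{l_n}\int_E u(z)\, \mathrm{tr}\bigl((G_n^{\mu_n,w_t})^{-1} P(z)P(z)^*\bigr) w_t(z)^{2n}\, d\mu_n(z)$. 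Now use the cyclic property of trace: $\mathrm{tr}\bigl((G_n^{\mu_n,w_t})^{-1}P(z)P(z)^*\bigr) = \mathrm{tr}\bigl(P(z)^*(G_n^{\mu_n,w_t})^{-1}P(z)\bigr) = P(z)^*(G_n^{\mu_n,w_t})^{-1}P(z)$, a scalar. Then by the identity \eqref{useful} (applied with $w$ replaced by $w_t$ and $\mu$ replaced by $\mu_n$), we have $w_t(z)^{2n} P(z)^*(G_n^{\mu_n,w_t})^{-1}P(z) = B_n^{\mu_n,w_t}(z)$. Substituting gives exactly $f_n'(t) = \frac{n}{l_n}\int_E u(z)\, B_n^{\mu_n,w_t}(z)\, d\mu_n(z)$, which is the claim.

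The main obstacle, and the only genuinely non-formal point, is justifying the interchange of $\frac{d}{dt}$ with the integral defining the Gram matrix entries, and then the interchange of the trace with the integral — in particular handling the case where $\mu_n$ is only locally finite and $E$ is unbounded, so that one must invoke the standing assumption that the weighted $L^2$-norms $\|p_i\|_{L^2(w_t^n d\mu_n)}$ are finite for $t$ in the admissibility interval, together with the local boundedness of $u$ and a dominated-convergence argument on compact exhaustions of $E$. Everything else is routine linear algebra: Jacobi's formula for $\frac{d}{dt}\log\det$, positive-definiteness of $G_n^{\mu_n,w_t}$ (which holds by the non-degeneracy assumption on the inner product, and persists for $t$ in the admissibility interval), and the cyclic invariance of the trace. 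Once differentiation under the integral is licensed, the identification with $B_n^{\mu_n,w_t}$ is immediate from \eqref{useful} and no further estimates are needed.
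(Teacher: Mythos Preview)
Your proof is correct and follows essentially the same route as the paper: differentiate $\log\det G_n^{\mu_n,w_t}$ via Jacobi's formula (the paper phrases this as $\log\det = \mathrm{trace}\log$), compute $G_n'(t)$ entrywise, use cyclicity of the trace, and invoke \eqref{useful} to identify the integrand with $B_n^{\mu_n,w_t}$. Your added remarks on justifying differentiation under the integral sign are reasonable and not made explicit in the paper.
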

\begin{proof}
Recall that $G_n^{\mu_n,w_t}$ is a positive definite Hermitian matrix; hence we can define $\log (G_n^{\mu_n,w_t})$. Using $\log\,{\rm det}(G_n^{\mu_n,w_t})={\rm trace}\log(G_n^{\mu_n,w_t})$, we calculate
\begin{eqnarray*}
2l_nf_n'(t)&=&-{d\over dt}{\rm trace}\left(\log(G_n^{\mu_n,w_t})\right) \\
&=& -{\rm trace} \left({d\over dt}\log(G_n^{\mu_n,w_t})\right) \\
&=& -{\rm trace}\left( (G_n^{\mu_n,w_t})^{-1}{d\over dt} G_n^{\mu_n,w_t}\right) \\
\end{eqnarray*}
$$=2n\, {\rm trace}  \left( (G_n^{\mu_n,w_t})^{-1} \left[\int_E p_i(z)\overline{p_j(z)}u(z)w(z)^{2n}\exp(-2ntu(z))d\mu_n\right]\right).$$
We use $${\rm trace} (ABC) = {\rm trace} (CAB)= CAB$$ to write the previous line as
\begin{eqnarray*}
&=&2n\int_E P^*(z)(G_n^{\mu_n,w_t})^{-1}P(z) u(z)w(z)^{2n}\exp(-2ntu(z))d\mu_n\\
&=&2n\int_E u(z) P^*(z)(G_n^{\mu_n,w_t})^{-1}P(z) w_t(z)^{2n} d\mu_n \\
&=& 2n\int_E u(z) B_n^{\mu_n,w_t}(z) d\mu_n
\end{eqnarray*}
where the last equality follows from (\ref{useful}):
$$
w_t^{2n}P^*(G_n^{\mu_n,w_t})^{-1}P = B_n^{\mu_n,w_t}.
$$
\end{proof}

Similar, but more involved calculations, give the following (cf., Lemma 3.6 of \cite{bblw}).

\begin{lemma} \label{2ndderiv}
The functions $f_n(t)$ are concave, i.e., $f_n''(t)\le0.$
\end{lemma}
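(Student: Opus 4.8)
The plan is to differentiate the first-derivative formula of Lemma \ref{1stderiv} once more, thereby reducing $f_n''(t)\le 0$ to a single trace inequality for the Gram matrix and two auxiliary moment matrices, and then to prove that trace inequality by re-expressing it in an orthonormal basis as a double integral against $|K_n|^2$, $K_n$ the Bergman kernel, and invoking the elementary bound $u(z)u(\zeta)\le\tfrac12\bigl(u(z)^2+u(\zeta)^2\bigr)$ together with the reproducing property of $K_n$.

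Concretely, I would fix $t$ in the interval on which $w_t$ is admissible and, with $P(z)$ as in \eqref{P} and $G:=G_n^{\mu_n,w_t}(\beta_n)=\int_E P(z)P(z)^*w_t(z)^{2n}d\mu_n$, introduce the Hermitian matrices
$$H:=\int_E u(z)\,P(z)P(z)^*\,w_t(z)^{2n}\,d\mu_n,\qquad L:=\int_E u(z)^2\,P(z)P(z)^*\,w_t(z)^{2n}\,d\mu_n,$$
which are finite under the standing integrability assumptions. Differentiating $B_n^{\mu_n,w_t}(z)=w_t(z)^{2n}P(z)^*G^{-1}P(z)$ in $t$ (the differentiation under the integral sign being justified exactly as in the proof of Lemma \ref{1stderiv}), and using $\tfrac{d}{dt}G=-2nH$ together with $\tfrac{d}{dt}G^{-1}=-G^{-1}\bigl(\tfrac{d}{dt}G\bigr)G^{-1}$, one obtains
$$\tfrac{d}{dt}B_n^{\mu_n,w_t}(z)=-2n\,u(z)\,B_n^{\mu_n,w_t}(z)+2n\,w_t(z)^{2n}P(z)^*G^{-1}HG^{-1}P(z).$$
Substituting this into $f_n'(t)=\tfrac{n}{l_n}\int_E u\,B_n^{\mu_n,w_t}\,d\mu_n$ and evaluating the quadratic forms under the integral as traces (using $\int_E u\,w_t^{2n}PP^*d\mu_n=H$ and $\int_E u^2 w_t^{2n}PP^*d\mu_n=L$) yields
$$f_n''(t)=\frac{2n^2}{l_n}\Bigl[\,{\rm trace}\bigl(G^{-1}HG^{-1}H\bigr)-{\rm trace}\bigl(G^{-1}L\bigr)\Bigr],$$
so the task reduces to proving ${\rm trace}(G^{-1}HG^{-1}H)\le{\rm trace}(G^{-1}L)$.

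Both traces are independent of the choice of basis of $Poly(nP)$, so I would compute them in an orthonormal basis $\{q_1,\dots,q_{d_n}\}$ for $\langle\cdot,\cdot\rangle_{\mu_n,w_t}$ (cf.\ \eqref{useful}), in which $G=I$, $H=[\,\int_E u\,q_i\overline{q_j}w_t^{2n}d\mu_n\,]$ and $L=[\,\int_E u^2 q_i\overline{q_j}w_t^{2n}d\mu_n\,]$. Writing $K_n(z,\zeta):=\sum_j q_j(z)\overline{q_j(\zeta)}$, one then has ${\rm trace}(G^{-1}L)={\rm trace}\,L=\int_E u(z)^2\bigl(\sum_j|q_j(z)|^2\bigr)w_t(z)^{2n}d\mu_n=\int_E u^2 B_n^{\mu_n,w_t}d\mu_n$, while, $H$ being Hermitian, ${\rm trace}(G^{-1}HG^{-1}H)={\rm trace}(H^2)=\sum_{i,k}|H_{ik}|^2$, which upon expansion and summation over $i,k$ equals
$$\int_E\!\int_E u(z)u(\zeta)\,|K_n(z,\zeta)|^2\,w_t(z)^{2n}w_t(\zeta)^{2n}\,d\mu_n(z)\,d\mu_n(\zeta).$$
Bounding $u(z)u(\zeta)\le\tfrac12(u(z)^2+u(\zeta)^2)$ and using the symmetry of the double integral, this last quantity is at most $\int_E u(z)^2\bigl[\int_E|K_n(z,\zeta)|^2 w_t(\zeta)^{2n}d\mu_n(\zeta)\bigr]w_t(z)^{2n}d\mu_n(z)$; by orthonormality of $\{q_j\}$ the inner integral is $\sum_j|q_j(z)|^2$, so the whole expression equals $\int_E u^2 B_n^{\mu_n,w_t}d\mu_n={\rm trace}(G^{-1}L)$. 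This gives the desired inequality and hence $f_n''(t)\le0$.

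Everything except the final paragraph is a forced computation running parallel to Lemma \ref{1stderiv}. The one genuine point — and the step I expect to be the main obstacle to get cleanly — is recognizing the Hilbert--Schmidt term ${\rm trace}(H^2)$ as a double integral against $|K_n|^2$ and then extracting the sign via Cauchy--Schwarz and the reproducing property of $K_n$; this is what yields $f_n''\le 0$. A secondary technical nuisance is verifying the integrability of the entries of $H$ and $L$ and justifying differentiation under the integral when $E$ is unbounded, both handled by the same dominated-convergence considerations already invoked in the proof of Lemma \ref{1stderiv}.
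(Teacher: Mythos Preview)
Your proof is correct and is precisely the ``more involved calculation'' the paper alludes to (and defers to \cite{bblw}): differentiate the trace formula once more to obtain $f_n''(t)=\frac{2n^2}{l_n}\bigl[\mathrm{trace}(G^{-1}HG^{-1}H)-\mathrm{trace}(G^{-1}L)\bigr]$, then establish the trace inequality. Your Bergman-kernel/Cauchy--Schwarz argument for the latter is the standard route and exactly what one expects here; there is nothing to add.
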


Now we restrict to $K\subset \CC^d$ compact and non-pluripolar. Fix $\mu$ a probability measure on $K$ and $w$ an admissible weight on $K$. If $\mu$ has the property that
\begin{equation}\label{wa} {\rm det}(G_n^{\mu',w})\le {\rm det}(G_n^{\mu,w})
\end{equation}
for all other probability measures $\mu'$ on $K$ then $\mu$ is said to be a {\it $P-$optimal measure of
degree $n$ for $K$ and $w$}. This property is independent of the basis used for $Poly(nP)$. An equivalent characterization is that 
\[\max_{z\in K} B_n^{\mu,w}(z)\le \max_{z\in K} B_n^{\mu',w}(z)\]
for all other probability measures $\mu'$ on $K$. Note that for {\it any} probability measure $\mu',$  $\displaystyle{\int_K B_n^{\mu',w}(z) d\mu'=d_n},$ so that
\[\max _{z\in K} B_n^{\mu',w}(z)\ge d_n.\]
For a $P-$optimal measure we have equality.

\begin{proposition}
\label{KW}
Let $w$ be an admissible weight on $K.$ A probability measure $\mu$
is a $P-$optimal measure of degree $n$ for $K$ and $w$ if and only if
$$ \max_{z\in K}B_n^{\mu,w}(z)=d_n.$$
\end{proposition}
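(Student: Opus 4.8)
The plan is to prove the two implications separately, and I would begin by transporting everything to an orthonormal basis $Q_n=\{q_1,\dots,q_{d_n}\}$ of $Poly(nP)$ for the inner product $\langle\cdot,\cdot\rangle_{\mu,w}$. With respect to $Q_n$ the Gram matrix $G_n^{\mu,w}$ is the $d_n\times d_n$ identity, so $\det G_n^{\mu,w}=1$, and $B_n^{\mu,w}(z)=\sum_j|q_j(z)|^2w(z)^{2n}$; since $P$-optimality is independent of the chosen basis, this costs nothing. I would also use, from the paragraph preceding the statement, that $\int_K B_n^{\mu',w}\,d\mu'=d_n$ and hence $\max_K B_n^{\mu',w}\ge d_n$ for every probability measure $\mu'$ on $K$.

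For the direction that $\max_{z\in K}B_n^{\mu,w}(z)=d_n$ forces $\mu$ to be $P$-optimal, I would argue directly: for an arbitrary probability measure $\mu'$ on $K$ the matrix $A:=G_n^{\mu',w}(Q_n)=\big[\int_K q_i\,\overline{q_j}\,w^{2n}\,d\mu'\big]$ is positive semidefinite Hermitian with $\sum_i A_{ii}=\int_K\sum_i|q_i|^2w^{2n}\,d\mu'=\int_K B_n^{\mu,w}\,d\mu'\le\max_K B_n^{\mu,w}=d_n$, so by Hadamard's inequality followed by the arithmetic--geometric mean inequality, $\det A\le\prod_i A_{ii}\le\big(\tfrac1{d_n}\sum_i A_{ii}\big)^{d_n}\le1=\det G_n^{\mu,w}(Q_n)$, which is exactly the defining inequality for $P$-optimality of $\mu$. (This direction is in fact immediate from the equivalent characterization recalled just before the statement, since $\max_K B_n^{\mu',w}\ge d_n=\max_K B_n^{\mu,w}$.)

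For the converse I would assume $\mu$ is $P$-optimal and argue by contradiction. Because $w$ is upper semicontinuous and $K$ is compact, $B_n^{\mu,w}=\big(\sum_j|q_j|^2\big)\,w^{2n}$ is upper semicontinuous on $K$ and attains its maximum, say at $z_0\in K$; set $m:=B_n^{\mu,w}(z_0)$, so $m\ge d_n$, and suppose $m>d_n$. I would perturb $\mu$ by a point mass at $z_0$: for small $\epsilon>0$ let $\mu_\epsilon:=(1-\epsilon)\mu+\epsilon\,\delta_{z_0}$, a probability measure on $K$ whose weighted inner product is still finite and non-degenerate since $\mu_\epsilon\ge(1-\epsilon)\mu$. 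In the basis $Q_n$ we have $G_n^{\mu_\epsilon,w}(Q_n)=(1-\epsilon)I+\epsilon H$, where $H:=\big[q_i(z_0)\,\overline{q_j(z_0)}\,w(z_0)^{2n}\big]$ is positive semidefinite of rank one (it is nonzero because $m>0$) with unique nonzero eigenvalue $\mathrm{trace}(H)=m$. Hence the eigenvalues of $G_n^{\mu_\epsilon,w}(Q_n)$ are $1-\epsilon+\epsilon m$ once and $1-\epsilon$ with multiplicity $d_n-1$, so
\[
\log\det G_n^{\mu_\epsilon,w}(Q_n)=\log\big(1+\epsilon(m-1)\big)+(d_n-1)\log(1-\epsilon)=\epsilon(m-d_n)+O(\epsilon^2),
\]
which is strictly positive for $\epsilon>0$ small. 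Thus $\det G_n^{\mu_\epsilon,w}(Q_n)>1=\det G_n^{\mu,w}(Q_n)$, contradicting the $P$-optimality of $\mu$; therefore $m=d_n$.

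The argument is mostly routine; the step I would treat most carefully is the converse --- namely, ensuring that $B_n^{\mu,w}$ genuinely attains its maximum on $K$ (this is exactly where upper semicontinuity of the admissible weight and compactness of $K$ are used), that $\mu_\epsilon$ is a legitimate competitor, and that its Gram determinant strictly exceeds $1$. Once the computation is done in the $\mu$-orthonormal basis $Q_n$, all of these are elementary, and I do not anticipate a genuine obstruction.
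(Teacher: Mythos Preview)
Your argument is correct. The paper actually omits the proof entirely, referring instead to Kiefer--Wolfowitz \cite{[KW]} and Proposition~3.1 of \cite{bblw}; what you have written is essentially the classical Kiefer--Wolfowitz argument carried out in those references: the ``if'' direction via Hadamard/AM--GM on the Gram matrix in a $\mu$-orthonormal basis, and the ``only if'' direction via the rank-one perturbation $\mu_\epsilon=(1-\epsilon)\mu+\epsilon\delta_{z_0}$ at a maximizing point, computing the first-order variation of $\log\det G_n^{\mu_\epsilon,w}$ to get the contradiction. Your care about upper semicontinuity of $B_n^{\mu,w}$ (so the maximum is attained) and non-degeneracy of $\mu_\epsilon$ is appropriate and the computation of the eigenvalues of $(1-\epsilon)I+\epsilon H$ is correct, so there is no gap.
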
 

\noindent It follows that if $\mu$ is $P-$optimal for $K$ and $w$ then
\begin{equation} \label{wb} B_n^{\mu,w}(z)=d_n,\quad a.e.\,\, \mu.\end{equation}
We omit the proof; cf.,  \cite{[KW]} or Proposition 3.1 of \cite{bblw}.

\subsection {Ball volume ratios.} Given a (complex) $M-$dimensional vector space $V$, and two subsets $A,B$ in $V$, we write
$$[A:B]:=\log \frac{vol(A)}{vol(B)}$$
where ``vol'' denotes any (Haar) measure on $V$ (taking the ratio makes $[A:B]$ independent of this choice). In particular, if $V$ is equipped with two Hermitian inner products $h,h'$, and $B,B'$ are the corresponding unit balls, then a linear algebra exercise shows that 
\begin{equation}
\label{gramvolume}
[B:B'] = \log \det [h'(e_i,e_j)]_{i,j=1,...,M}
\end{equation}
where $e_1,...,e_M$ is an $h-$orthonormal basis for $V$. In other words, $[B:B'] $ is a {\it Gram determinant} with respect to the $h'$ inner product relative to the $h-$orthonormal basis. Indeed, $[B:B']$ is independent of the $h-$orthonormal basis chosen for $V$. 

{\it We will generally take $V=Poly(nP)$ and our subsets to be unit balls with respect to norms on $Poly(nP)$}; in this case we call (\ref{gramvolume}) a {\it ball volume ratio}. In particular, given $P$, let $\mu$ be a locally finite measure on a closed set $E\subset \CC^d$, and let $w$ be an admissible weight on $E$ such that (\ref{Wip}) is non-degenerate and $||f||_{L^2(w^nd\mu)}^2<\infty$ for all $f\in Poly(nP)$. We noted that for the unit torus $T^d$, the standard basis monomials $\beta_n=\{z^J, \ J\in nP\cap (\ZZ^+)^d\}$ form an orthonormal basis for $Poly(nP)$ with respect to the standard Haar measure $\mu_T$ on $T^d$. Letting
$$B_n=\{p_n\in Poly(nP): ||p_nw^n||_{L^2(\mu)}=||p_n||_{L^2(w^{2n}\mu)}\leq 1 \}$$
and
$$B_n'=\{p_n\in Poly(nP): ||p_n||_{L^2(\mu_T)}\leq 1 \}$$
be $L^{2}-$balls in $Poly(nP)$, we have
\begin{equation}\label{gramtorus} [B_n:B_n'] = \log \det G_n^{\mu,w}(\beta_n).\end{equation}
We will also use $L^{\infty}-$balls in $Poly(nP)$.

Taking $E=K$ compact and $\mu$ finite, replacing the standard
basis monomials $\{z^J, \ J\in nP\cap (\ZZ^+)^d\}$ by orthogonal polynomials $\{r_J(z)\}$ using the Gram-Schmidt process in $L^2(w^{2n}\mu)$, the Gram determinants ${\rm det}(G_n^{\mu,w})=\prod_J ||r_J||^2_{L^2(w^{2n}\mu)}$ are unchanged and we have 
$${\rm det}(G_n^{\mu,w})=\frac{1}{d_n!}Z_n:= \frac{1}{d_n!}Z_n(\mu,w)$$
where
$$Z_n:=\int_{K^{d_n}}|VDM(z_1,\cdots,z_{d_n})|^2w(z_1)^{2n}\cdots w(z_{d_n})^{2n} d\mu(z_1)\cdots d\mu(z_{d_n}).$$
It is easy to see that if $\mu$ is a Bernstein-Markov measure for the triple $(P,K,Q)$ where $w=e^{-Q}$, i.e., (\ref{wtdbm}) holds for $\mu$, then
\begin{equation}\label{estimates} Z_n \leq \delta^{w,n}(K)^{2l_n}\mu(K)^{d_n} \leq \mu(K)^{d_n} M_n^{2d_n} Z_n.\end{equation}

\begin{conjecture} \label{conj} Let $K\subset \CC^d$ be compact and let $w=e^{-Q}$ be an admissible weight on $K$. If $\mu$ is a Bernstein-Markov measure for the triple $(P,K,Q)$, then
\begin{equation}\label{gnlimit}
\lim_{n\to \infty} Z_n^{\frac{1}{2l_n}}=\lim_{n\to \infty} {\rm det}(G_n^{\mu,w})^{\frac{1}{2l_n}}=:\mathcal F_P(K,Q)
\end{equation}
exists. 
\end{conjecture}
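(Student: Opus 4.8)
\medskip\noindent\emph{Proof proposal.}
The plan is to reduce Conjecture \ref{conj} to the existence of the weighted $P$-transfinite diameter $\delta^w(K)$ and then to obtain the latter from Theorem \ref{keycn}. The reduction is routine: from the Gram--Schmidt identity preceding (\ref{estimates}) one has $\det(G_n^{\mu,w})=\frac{1}{d_n!}Z_n$, and since, by Remark \ref{numbers}, $l_n$ grows one power of $n$ faster than $d_n$, we get $\frac{1}{2l_n}\log d_n!=O\!\left(\frac{\log n}{n}\right)\to 0$; hence the two limits in (\ref{gnlimit}) agree whenever one exists. Moreover, if $\mu$ is Bernstein--Markov for $(P,K,Q)$, then (\ref{estimates}), together with $d_n/l_n\to 0$ and $M_n^{1/n}\to 1$, forces
$$\frac{1}{2l_n}\log Z_n-\frac{1}{l_n}\log W_n(K)\longrightarrow 0,$$
so $\mathcal F_P(K,Q)$ exists if and only if $\delta^w(K)=\lim_n W_n(K)^{1/l_n}$ exists, and then $\mathcal F_P(K,Q)=\delta^w(K)$.

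For the existence of $\delta^w(K)$ I would invoke (\ref{gramtorus}): with $\beta_n=\{z^J:J\in nP\cap(\ZZ^+)^d\}$ the standard monomial basis, which is orthonormal on $T^d$ with respect to $\mu_T$ (Remark \ref{torus2}),
$$\frac{1}{2l_n}\log\det(G_n^{\mu,w})=\frac{1}{2l_n}[B_n:B_n'],$$
where $B_n$ is the unit ball of $Poly(nP)$ in $L^2(w^{2n}\mu)$ and $B_n'$ the unit ball in $L^2(\mu_T)$. Here $B_n'$ is attached to the weight $Q\equiv 0$ on $T^d$, whose weighted $P$-extremal function is $H_P\in L_{P,+}$ (Example \ref{torus}), and $B_n$ is attached to $(K,w)$, with extremal function $\Pi(Q)=V_{P,K,Q}^*$; both $\mu_T$ and $\mu$ are Bernstein--Markov for their respective data. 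Theorem \ref{keycn} then identifies $\lim_n\frac{1}{2l_n}[B_n:B_n']$ with a fixed ($d$- and $P$-dependent) multiple of the Aubin--Mabuchi energy $\mathcal E\big(V_{P,K,Q}^*,H_P\big)$ of Section \ref{enprop}. Since this quantity depends only on $K,Q,P$, the limit exists; this both proves the conjecture and yields a closed form for $\mathcal F_P(K,Q)$ in terms of $\mathcal E\big(V_{P,K,Q}^*,H_P\big)$.

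The main obstacle is that Theorem \ref{keycn}, following \cite{[BB]}, is established under a regularity hypothesis --- continuity of $V_{P,K,Q}$, e.g.\ $K$ locally regular and $Q\in C(K)$ (cf.\ Remark \ref{locreg}) --- whereas the conjecture asks only that $\mu$ be Bernstein--Markov. Removing the hypothesis appears to require an approximation argument: regularize $K$ (say by finite unions of small closed balls, using the implication in (\ref{locreg})) and replace the upper semicontinuous $Q$ by a decreasing sequence of continuous weights, so that the corresponding extremal functions --- and hence, by the continuity properties of $\mathcal E$ on $L_{P,+}$, the energies --- converge. The difficulties are that the Bernstein--Markov property need not pass to the approximating data, and that one must control $\frac{1}{l_n}\log W_n(K)$ uniformly in $n$ along the approximation. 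An alternative route, independent of Theorem \ref{keycn}, is a Zaharjuta-type analysis: group the monomials $z^J$, $J\in nP$, by the direction $J/n\in P$, introduce weighted directional Chebyshev constants $\tau_\theta$ for $\theta\in P$, and show that $\frac{1}{l_n}\log W_n(K)$ converges to a Riemann-type integral of $\log\tau_\theta$ over $P$; carrying this out for an arbitrary convex body $P$ and admissible weight $w$ is precisely the gap that leaves the statement a conjecture.
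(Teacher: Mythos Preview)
Your core argument---apply Theorem \ref{keycn} to the pair $(K,\mu,Q)$ and $(T^d,\mu_T,0)$, using (\ref{gramtorus}) to identify the $L^2$ ball-volume ratio with $\log\det G_n^{\mu,w}(\beta_n)$---is exactly what the paper does in Remark \ref{rmk42}. The preliminary reduction you make via (\ref{estimates}) to the existence of $\delta^w(K)$ is correct but unnecessary: Theorem \ref{keycn} gives the limit of $\frac{1}{2nd_n}\log\det G_n^{\mu,w}$ directly, and then (\ref{key}) converts the normalization $nd_n$ to $l_n$. The paper in fact runs your reduction in reverse, \emph{deducing} the existence of $\delta^w(K)$ from the Gram determinant limit.

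The ``main obstacle'' you raise is not present. Theorem \ref{keycn} in this paper is stated and proved for arbitrary admissible weights on closed sets, with no continuity hypothesis on $V_{P,K,Q}$. The approximation argument you sketch---smoothing the weight and regularizing the set---is precisely what the paper builds into the proof of Theorem \ref{keycn} itself (Case 3 in subsection \ref{pomt}): one sandwiches $\Pi_E(\phi)$ between $\Pi_{\CC^d}(\phi_j)$ with $\phi_j\uparrow\hat\psi$ and $\Pi_{\CC^d}(\psi_j)$ with $\psi_j\downarrow\Pi_E(\phi)$, both sequences smooth and strongly admissible, and uses monotonicity (\ref{bvmon}) together with the energy continuity in Lemma \ref{BT63} and (\ref{downlemma}). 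Crucially, the Bernstein--Markov hypothesis is only used to pass between $L^\infty$ and $L^2$ balls for the \emph{original} data $(K,\mu,Q)$ and $(T^d,\mu_T,0)$, not for the approximants---so the difficulty you flag (Bernstein--Markov not passing to approximating data) never arises. Your alternative Zaharjuta route is therefore not needed here.
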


We verify the conjecture in Remark \ref{rmk42}. It then follows from (\ref{estimates}) and (\ref{key}) that $\lim_{n\to \infty}\delta^{w,n}(K)$ exists and equals $\mathcal F_P(K,Q)$. This gives the existence of the limit in the definition of the $P-$transfinite diameter (\ref{tdlim}) and the weighted $P-$transfinite diameter  (\ref{deltaw}). We also have:

\begin{proposition}\label{tfd}
Let $K$ be compact and $w$ an admissible weight function. Assume (\ref{gnlimit}). For $n=1,2,...$, let $\mu_n$ be a $P-$optimal measure of order $n$ for $K$ and $w.$ Then
\[\lim_{n\to\infty}{\rm det}(G_n^{\mu_n,w})^{\frac{1}{2l_n}}=\mathcal F_P(K,Q).\]
\end{proposition}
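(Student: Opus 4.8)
The plan is to trap $\det(G_n^{\mu_n,w})$ between two explicit constants times the square of the maximal weighted Vandermonde $W_n(K)$: the upper estimate will use only that $\mu_n$ is a probability measure, and the lower estimate will use that $\mu_n$ is $P-$optimal. After extracting $\tfrac{1}{2l_n}$-th roots the two combinatorial prefactors both tend to $1$ — because $l_n/d_n$ grows linearly in $n$ — so $\det(G_n^{\mu_n,w})^{1/2l_n}$ will have the same limit as $W_n(K)^{1/l_n}=\delta^{w,n}(K)$, and this limit is $\mathcal F_P(K,Q)$ by the remarks following Conjecture \ref{conj} (i.e. by (\ref{estimates}) and (\ref{key}), granting the hypothesis (\ref{gnlimit})).

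For the upper inequality I would work in the standard monomial basis $\beta_n$, so that $\det(G_n^{\mu_n,w})=\tfrac{1}{d_n!}Z_n(\mu_n,w)$. The integrand of $Z_n(\mu_n,w)$ is $|W(z_1,\dots,z_{d_n})|^2\le W_n(K)^2$ on all of $K^{d_n}$, so integrating against the probability measure $\mu_n^{\otimes d_n}$ gives $Z_n(\mu_n,w)\le W_n(K)^2$, hence $\det(G_n^{\mu_n,w})\le \tfrac{1}{d_n!}W_n(K)^2$. For the lower inequality I would fix an $n$-th weighted $P-$Fekete set $\zeta^{(n)}=(\zeta_1^{(n)},\dots,\zeta_{d_n}^{(n)})$ for $K$ and $w$ (it exists since $|W|$ is upper semicontinuous on the compact set $K^{d_n}$, and $W_n(K)>0$ because $\{w>0\}$ is nonpluripolar) and compare $\mu_n$ with the atomic probability measure $\mu^{\ast}=\tfrac1{d_n}\sum_{j=1}^{d_n}\delta_{\zeta_j^{(n)}}$. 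A short computation identifies $G_n^{\mu^{\ast},w}(\beta_n)$ with $\tfrac1{d_n}MM^{\ast}$, where $M=[\,e_i(\zeta_j^{(n)})\,w(\zeta_j^{(n)})^n\,]_{i,j}$, so $\det(G_n^{\mu^{\ast},w})=\tfrac1{d_n^{d_n}}|\det M|^2=\tfrac1{d_n^{d_n}}|W(\zeta^{(n)})|^2=\tfrac1{d_n^{d_n}}W_n(K)^2$; since $\mu_n$ is $P-$optimal of degree $n$ and the comparison $\det(G_n^{\mu',w})\le\det(G_n^{\mu_n,w})$ for a fixed basis is basis-independent, $\det(G_n^{\mu_n,w})\ge \det(G_n^{\mu^{\ast},w})=\tfrac1{d_n^{d_n}}W_n(K)^2$.

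Combining the two bounds and using $W_n(K)^{1/l_n}=\delta^{w,n}(K)$,
$$(d_n^{d_n})^{-1/2l_n}\,\delta^{w,n}(K)\ \le\ \det(G_n^{\mu_n,w})^{1/2l_n}\ \le\ (d_n!)^{-1/2l_n}\,\delta^{w,n}(K).$$
By Remark \ref{numbers} (specifically (\ref{lndn})--(\ref{key})), $l_n/d_n$ grows linearly in $n$ while $d_n=O(n^d)$, so $\tfrac{\log d_n!}{2l_n}\le\tfrac{d_n\log d_n}{2l_n}\to0$; hence both outer factors tend to $1$, and since $\delta^{w,n}(K)\to \mathcal F_P(K,Q)$ the squeeze yields $\lim_{n\to\infty}\det(G_n^{\mu_n,w})^{1/2l_n}=\mathcal F_P(K,Q)$. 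I do not expect a genuine obstacle here: the only points needing care are the Gram-matrix identity for the Fekete atomic measure $\mu^{\ast}$ and the elementary check that the factorial/exponential prefactors are killed by the $\tfrac{1}{2l_n}$-th root because $l_n/d_n$ is of exact linear order in $n$. (Alternatively, the lower bound could be obtained by comparing $\mu_n$ with a Bernstein--Markov probability measure and invoking (\ref{gnlimit}) directly, but the argument above has the advantage of using only the maximal weighted Vandermonde.)
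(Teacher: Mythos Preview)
Your argument is correct and follows essentially the same route as the paper: both bound $\det(G_n^{\mu_n,w})$ above by $\tfrac{1}{d_n!}W_n(K)^2$ using that $\mu_n$ is a probability measure, below by $\tfrac{1}{d_n^{d_n}}W_n(K)^2$ by comparing with the atomic measure on a weighted $P$-Fekete set via optimality, and then pass to the limit using $\delta^{w,n}(K)\to\mathcal F_P(K,Q)$. Your write-up adds a bit more detail on the Gram-matrix identity for the atomic measure and on why the combinatorial prefactors are killed by the $\tfrac{1}{2l_n}$-th root, but the substance is the same.
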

\begin{proof}
We will use
$$\int_{K^{d_n}}|VDM(z_1,\cdots,z_{d_n})|^2w(z_1)^{2n}\cdots w(z_{d_n})^{2n} d\mu_n(z_1)\cdots d\mu_n(z_{d_n})$$
$$={d_n}!\,{\rm det}(G_n^{\mu_n,w}).$$ 
It follows, since $\mu_n$ is a probability measure, that
$$
{\rm det}(G_n^{\mu_n,w})\le {1\over {d_n}!}(\delta_n^w(K))^{2l_n}.
$$
Now if $f_1,f_2,\cdots,f_{d_n}\in K$ are weighted $P-$Fekete points of order $n$ for $K,$
i.e., points in $K$ for which 
\[|VDM(z_1,\cdots,z_{d_n})|w^n(z_1)\cdots w^n(z_{d_n})\]
is maximal, then the discrete measure 
\begin{equation}
\nu_n={1\over d_n}\sum_{k=1}^{d_n} \delta_{f_k}
\end{equation}
is a candidate for a $P-$optimal measure of order $n$; hence
\[ {\rm det}(G_n^{\nu_n,w})\le {\rm det}(G_n^{\mu_n,w}).\]
But
$$
{\rm det}(G_n^{\nu_n,w})={1\over d_n^{d_n}}|VDM(f_1,\cdots,f_{d_n})|^2w(f_1)^{2n}\cdots w(f_{d_n})^{2n} $$
$$={1\over d_n^{d_n}}(\delta^{w,n}(K))^{2l_n}
$$
so that
\[{1\over d_n^{d_n}}(\delta^{w,n}(K))^{2l_n} \le {\rm det}(G_n^{\mu_n,w}). \]
The result follows since (\ref{gnlimit}) implies $\lim_{n\to \infty}\delta^{w,n}(K)$ exists and equals $\mathcal F_P(K,Q)$.
\end{proof}

For future use we note that the ball volume ratios satisfy $[A:B]=-[B:A]$; the cocycle  condition:
$$[A:B]+[B:C]+[C:A]=0;$$
and they are ``monotone'' in the first slot: for any $B\subset Poly(nP)$, if $E\subset \CC^d$ is closed with admissible weights $Q_1\leq Q_2$ and 
$$\mathcal B^{\infty}(E,nQ_i):=\{p_n\in Poly(nP): ||p_ne^{-nQ_i}||_{E}\leq 1 \}, \ i=1,2$$
then
\begin{equation}\label{bvmon}[\mathcal B^{\infty}(E,nQ_1):B]\leq [\mathcal B^{\infty}(E,nQ_2):B]
\end{equation}
(with a similar statement for $L^2-$balls for $\mu$ a measure on $E$). Analogous properties will hold for the energy functional discussed next.

\section {Energy.}\label{enprop}

For $u,v \in L_{P,+}$, we define the {\it energy} 
\begin{equation}
\label{relendef}
\mathcal E (u,v):= \int_{\CC^d} (u-v)\sum_{j=0}^d (dd^cu)^j\wedge (dd^cv)^{d-j}.
\end{equation}
A reason for this definition will appear in Proposition \ref{diffprop}, and Theorem \ref{keycn} will relate asymptotics of certain ball volume ratios to the energy of appropriate $u,v$. For any functions $A,B\in L_{P,+}$ we have $A-B$ is uniformly bounded on $\CC^d$. We will need an integration by parts formula in this setting. Using results from Bedford-Taylor \cite{BT88}, one can show:  given $A,B,C,D\in L_{P,+}$, let $u_1,...,u_{d-1} \in L_{P,+}$.  Then
\begin{equation}
\label{ibyp1} \int_{\CC^d}(A-B)(dd^cC - dd^cD)\wedge dd^cu_1 \wedge \cdots \wedge dd^cu_{d-1} \end{equation} 
$$ =\int_{\CC^d}(C-D)(dd^cA - dd^cB)\wedge dd^cu_1 \wedge \cdots \wedge dd^cu_{d-1}$$
$$=-\int_{\CC^d}d(A-B)\wedge d^c(C - D)\wedge dd^cu_1 \wedge \cdots \wedge dd^cu_{d-1}.$$

The proof of the following fundamental differentiability property of the energy is exactly as that of Proposition 4.1 of \cite{[BB]}.

\begin{proposition} 
\label{diffprop}
Let $u,u',v\in L_{P,+}$. For $0\leq t\leq 1$, let 
$$f(t):= {\mathcal E} (u+t(u'-u),v).$$
Then $f'(t)$ exists for $0\leq t \leq 1$ and 
\begin{equation}
\label{difft} 
f'(t)= (d+1)\int_{\CC^d} (u'-u) (dd^c(u+t(u'-u)))^d.
\end{equation}
\end{proposition}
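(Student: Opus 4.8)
The plan is to compute $f'(t)$ by writing the difference quotient $\frac{1}{h}\bigl(f(t+h)-f(t)\bigr)$ explicitly using the definition \eqref{relendef} of $\mathcal E$, expanding the sum $\sum_{j=0}^d (dd^c u_s)^j\wedge(dd^c v)^{d-j}$ (where $u_s := u+s(u'-u)$), and then letting $h\to 0$. Since all the functions $u,u',v$ lie in $L_{P,+}$, the differences $u'-u$, $u_s-v$, etc.\ are bounded on $\CC^d$, and the relevant mixed Monge-Amp\`ere masses are finite and constant by Proposition~\ref{turgay1} (Remark~\ref{totalMA}); this is what legitimizes all the manipulations. Concretely, set $w:=u'-u$, so that $u_{t+h}-u_t = hw$ and $dd^c u_{t+h} = dd^c u_t + h\,dd^c w$. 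Then
$$f(t+h)-f(t) = \int_{\CC^d}\bigl[(u_{t+h}-v)-(u_t-v)\bigr]\sum_{j=0}^d (dd^c u_{t+h})^j\wedge(dd^c v)^{d-j} + \int_{\CC^d}(u_t-v)\sum_{j=0}^d\bigl[(dd^c u_{t+h})^j-(dd^c u_t)^j\bigr]\wedge(dd^c v)^{d-j}.$$
The first integral equals $h\int_{\CC^d} w\sum_{j=0}^d(dd^c u_{t+h})^j\wedge(dd^c v)^{d-j}$, which as $h\to 0$ tends to $h$ times $\int_{\CC^d} w\sum_{j=0}^d(dd^c u_t)^j\wedge(dd^c v)^{d-j}$ after dividing by $h$ (continuity of mixed Monge-Amp\`ere operators along the affine segment, again justified by the uniform mass bounds). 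The second integral is $O(h)$ with a limit given by a telescoping-type expansion of $(dd^c u_{t+h})^j-(dd^c u_t)^j$ in powers of $h\,dd^c w$, whose leading term is $jh\,dd^c w\wedge(dd^c u_t)^{j-1}$.

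The heart of the matter is then an algebraic identity combined with integration by parts. After extracting the factor $h$ and letting $h\to 0$, one obtains
$$f'(t) = \int_{\CC^d} w\sum_{j=0}^d (dd^c u_t)^j\wedge(dd^c v)^{d-j} + \int_{\CC^d}(u_t-v)\sum_{j=1}^d j\,dd^c w\wedge(dd^c u_t)^{j-1}\wedge(dd^c v)^{d-j}.$$
In the second term I would apply the integration-by-parts formula \eqref{ibyp1} (with all functions in $L_{P,+}$) to move the $dd^c w$ onto the $(u_t-v)$ factor: schematically, $\int (u_t-v)\,dd^c w\wedge T = \int w\,(dd^c u_t - dd^c v)\wedge T$ for each closed positive current $T=(dd^c u_t)^{j-1}\wedge(dd^c v)^{d-j}$ of the appropriate bidegree. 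Carrying this out term by term and reindexing the sums, the two resulting families of integrals should collapse — by a discrete telescoping in $j$ — to exactly $(d+1)\int_{\CC^d} w\,(dd^c u_t)^d$, which is the claimed formula \eqref{difft}.

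The main obstacle I anticipate is the bookkeeping in this last combinatorial collapse: one must track the binomial/linear coefficients arising from expanding $(dd^c u_{t+h})^j$, combine them with the coefficients produced by each integration by parts, and verify that every mixed term $(dd^c u_t)^a\wedge(dd^c v)^b$ with $b\geq 1$ cancels, leaving only the pure power $(dd^c u_t)^d$ with total coefficient $d+1$. A secondary technical point is justifying the interchange of limit and integral / the continuity of the mixed Monge-Amp\`ere currents along the segment $t\mapsto u_t$; this is handled exactly as in Bedford-Taylor \cite{BT88} and as in the proof of Proposition~4.1 of \cite{[BB]}, using that $u_t - v$ is uniformly bounded and the total masses are controlled by Proposition~\ref{turgay1}. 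Since, as the authors note, the argument is identical to that of \cite[Proposition 4.1]{[BB]}, I would ultimately present the computation in that form, emphasizing only that \eqref{ibyp1} and the mass-constancy from Proposition~\ref{turgay1} are the two ingredients that survive the passage from the line-bundle setting to the $L_{P,+}$ setting.
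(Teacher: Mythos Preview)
Your proposal is correct and is essentially the approach of \cite[Proposition~4.1]{[BB]}, which is all the paper invokes here. The only cosmetic difference is that the paper's Remark following the proposition first reduces to the one-sided derivative at $t=0$ (via the substitution $u\mapsto u+s(u'-u)$) and then performs the expansion/integration-by-parts there, whereas you compute directly at a general $t$; the algebra is identical. One small sharpening you might make: since $u_{t+h}=u_t+h\,w$ with $w=u'-u$ bounded and $dd^cw=dd^cu'-dd^cu$ a difference of two currents in $L_{P,+}$, a full multilinear (binomial) expansion shows that $f(t+h)$ is literally a polynomial in $h$ with coefficients given by finite integrals (finiteness by Proposition~\ref{turgay1}); this makes the passage to the limit trivial and sidesteps any appeal to continuity of mixed Monge--Amp\`ere currents against the merely bounded (not necessarily continuous) function $w$. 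Your telescoping computation after applying \eqref{ibyp1} is exactly right and yields the coefficient $(d+1)$.
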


\begin{remark} Here we mean the appropriate one-sided derivatives at $t=0$ and $t=1$; e.g., 
\begin{equation}
\label{diff0} 
f'(0):=\lim_{t\to 0^+}\frac{f(t)-f(0)}{t}= (d+1)\int_{\CC^d} (u'-u) (dd^cu)^d.
\end{equation}
This last statement implies (\ref{difft}). For if $s$ is fixed,
$$g(t):=f(s+t)=   {\mathcal E} (u+(s+t)(u'-u),v)= {\mathcal E} (u+s(u'-u)+t(u'-u),v)$$
and applying (\ref{diff0}) to $g$ (so $u\to u+s(u'-u)$) we get 
$$g'(0)=f'(s)= (d+1)\int_{\CC^d} (u'-u) (dd^c(u+s(u'-u)))^d.$$
\end{remark}

We sometimes write (\ref{diff0}) in ``directional derivative'' notation as 
\begin{equation}
\label{diff02}
<\mathcal E' (u), u'-u>=(d+1)\int (u'-u)(dd^cu)^d.
\end{equation}
Note that the differentiation formula (\ref{difft}) is independent of $v$. This also follows from the {\it cocycle property}:

\begin{proposition} 
\label{cocycleprop}
Let $u,v,w\in L_{P,+}$. Then
$${\mathcal E}(u,v) +{\mathcal E}(v,w) + {\mathcal E}(w,u)=0.$$
\end{proposition}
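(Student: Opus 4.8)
The plan is to prove the cocycle property by a telescoping argument using the integration-by-parts formula (\ref{ibyp1}), exactly in the spirit of the corresponding result in \cite{[BB]}. The idea is to expand each of the three energies $\mathcal E(u,v)$, $\mathcal E(v,w)$, $\mathcal E(w,u)$ according to the definition (\ref{relendef}), and to reorganize the resulting sum so that the three contributions cancel term by term after a suitable grouping.

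First I would reduce to a more symmetric bookkeeping device. For $a,b\in L_{P,+}$ write $\mathcal E(a,b) = \int_{\CC^d}(a-b)\,T(a,b)$ where $T(a,b):=\sum_{j=0}^d (dd^ca)^j\wedge(dd^cb)^{d-j}$. Note $T(a,b)=T(b,a)$, and crucially that
\begin{equation}
\label{telescope}
(dd^ca)^{d+1} - (dd^cb)^{d+1} \ \text{``}=\text{''} \ (dd^ca - dd^cb)\wedge T(a,b)
\end{equation}
in the sense that for any $c\in L_{P,+}$,
\begin{equation}
\label{telescope2}
\int_{\CC^d} c\,\bigl[(dd^ca)\wedge T'-(dd^cb)\wedge T'\bigr]
\end{equation}
telescopes, where $T'$ is the obvious $d$-fold analogue; this is just the algebraic identity $x^{d+1}-y^{d+1}=(x-y)(x^d+x^{d-1}y+\cdots+y^d)$ applied formally to the commuting operators $dd^ca$, $dd^cb$ acting on currents. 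All wedge products here make sense since each function lies in $L_{P,+}\subset L^\infty_{loc}$, and all the integrals are finite by Proposition \ref{turgay1}.

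Next, the heart of the matter: I would show that $\mathcal E(u,v)+\mathcal E(v,w)+\mathcal E(w,u)$ can be written as a sum of terms each of the form $\int_{\CC^d}(A-B)(dd^cC-dd^cD)\wedge(\text{mixed }dd^c\text{'s})$, and then apply the integration-by-parts formula (\ref{ibyp1}) to swap the roles of the pairs. Concretely, using (\ref{relendef}) three times and the telescoping identity, each $\mathcal E(a,b)$ is, up to integration by parts, comparable to $\int (a-b)\bigl[(dd^ca)^{d}+\cdots\bigr]$; writing $u-w=(u-v)+(v-w)$ and expanding, the cyclic sum groups into pairs that cancel by (\ref{ibyp1}). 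An efficient way to organize this is to differentiate: by Proposition \ref{diffprop} (more precisely its consequence that $\frac{d}{dt}\mathcal E(u+t(u'-u),v)$ is independent of $v$), the function
\[
t\mapsto \mathcal E(u+t(w-u),v)+\mathcal E(v,w)+\mathcal E(w,u+t(w-u))
\]
has derivative $(d+1)\int(w-u)(dd^c(u+t(w-u)))^d - (d+1)\int(w-u)(dd^c(u+t(w-u)))^d = 0$, so the cyclic sum is constant in $t$; evaluating at $t=1$ (where $u+t(w-u)=w$) gives $\mathcal E(w,v)+\mathcal E(v,w)+\mathcal E(w,w)=\mathcal E(w,v)+\mathcal E(v,w)$. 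Thus it remains only to check the two-variable antisymmetry $\mathcal E(w,v)=-\mathcal E(v,w)$, i.e. $\mathcal E(a,b)+\mathcal E(b,a)=0$, which follows directly from $T(a,b)=T(b,a)$ since $\mathcal E(a,b)+\mathcal E(b,a)=\int(a-b)T(a,b)+\int(b-a)T(b,a)=0$.

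The main obstacle I anticipate is justifying the interchange of $\frac{d}{dt}$ with the integral and the use of Proposition \ref{diffprop} for the third summand $\mathcal E(w,u+t(w-u))$, which has the varying function in the \emph{second} slot rather than the first; here one uses the antisymmetry just established (valid pointwise for fixed $t$) to rewrite it as $-\mathcal E(u+t(w-u),w)$ and then apply Proposition \ref{diffprop}, being careful that the remark following that proposition gives the one-sided derivatives at the endpoints. Provided one first records $\mathcal E(a,b)=-\mathcal E(b,a)$ as a trivial lemma, the rest is the bookkeeping above, and no new analytic input beyond (\ref{ibyp1}) and Proposition \ref{diffprop} is needed.
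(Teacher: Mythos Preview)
Your argument is correct and essentially the same as the paper's: both first record the trivial antisymmetry $\mathcal E(a,b)=-\mathcal E(b,a)$ from $T(a,b)=T(b,a)$, and then use Proposition \ref{diffprop} (that the $t$-derivative of $\mathcal E(u+t(w-u),\,\cdot\,)$ is independent of the second slot) to conclude. The paper organizes this via two auxiliary functions $f(t):=\mathcal E(u+t(w-u),v)+\mathcal E(v,u)$ and $g(t):=\mathcal E(u+t(w-u),w)+\mathcal E(w,u)$ with $f(0)=g(0)=0$ and $f'\equiv g'$, hence $f(1)=g(1)$; your single function $h(t)$ is precisely $f(t)-g(t)$ plus the constant cyclic sum, so the two arguments differ only in bookkeeping (and your opening paragraph on telescoping via (\ref{ibyp1}) is not actually used).
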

\begin{proof} Let
$$f(t):={\mathcal E}(u+t(w-u),v)+{\mathcal E}(v,u)$$ 
and
$$g(t):={\mathcal E}(u+t(w-u),w)+{\mathcal E}(w,u).$$
Then $f(0)=g(0)=0$ by antisymmetry of ${\mathcal E}$. 
From (\ref{difft}),
$$f'(t)= (d+1)\int_{\CC^d} (w-u) (dd^c(u+t(w-u)))^d=g'(t)$$
for all $t$. Thus $f(1)=g(1)$; i.e.,
$${\mathcal E}(w,v) + {\mathcal E}(v,u)={\mathcal E}(w,w) + {\mathcal E}(w,u)={\mathcal E}(w,u).$$
\end{proof}

The independence of (\ref{difft}) on $v$ now follows: if $v,v'\in L_{P,+}$, then 
$${\mathcal E} (u+t(u'-u),v')+{\mathcal E} (v',v)+{\mathcal E} (v,u+t(u'-u))=0$$
so that the difference 
$${\mathcal E} (u+t(u'-u),v')-{\mathcal E} (u+t(u'-u),v)={\mathcal E} (v,v')$$
is independent of $t$. Thus we consider ${\mathcal E}$ as a functional on the first slot with the second fixed. As such, it is {\it increasing and concave}; the proof is exactly as for Proposition 4.4 of \cite{[BB]} and requires formula (\ref{ibyp1}).

\begin{proposition} Let $u,v,w\in L_{P,+}$. Then
$$u\geq v \ \hbox{implies} \ {\mathcal E}(u,w) \geq 
{\mathcal E}(v,w)$$
and for $0\leq t\leq 1$
$${\mathcal E}(tu+(1-t)v,w)\geq t{\mathcal E}(u,w) +(1-t){\mathcal E}(v,w);$$
i.e., $g(t):={\mathcal E}(tu+(1-t)v,w)$ satisfies $g''(t)\leq 0$. 
\end{proposition}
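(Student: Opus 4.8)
The plan is to deduce both the monotonicity and concavity of $g(t):=\mathcal E(tu+(1-t)v,w)$ directly from the differentiability formula (\ref{difft}) together with the comparison-type Proposition \ref{turgay1} and the integration-by-parts formula (\ref{ibyp1}), exactly as in the proof of Proposition 4.4 of \cite{[BB]}. First I would record that, by the discussion following Proposition \ref{cocycleprop}, the value of $\mathcal E(\cdot,w)$ differs from $\mathcal E(\cdot,v)$ by a constant, so it is harmless to differentiate with whichever second slot is convenient; write $\phi(t):=tu+(1-t)v=v+t(u-v)$, which lies in $L_{P,+}$ for each $t\in[0,1]$ since $H_P+C_v\le v\le \phi(t)$ pointwise by convexity. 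Applying Proposition \ref{diffprop} with the path $\phi(t)$ (i.e. $u\rightsquigarrow v$, $u'\rightsquigarrow u$) gives
\begin{equation}
\label{gprime}
g'(t)=(d+1)\int_{\CC^d}(u-v)\,(dd^c\phi(t))^d.
\end{equation}

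For monotonicity: if $u\ge v$ then the integrand in (\ref{gprime}) is nonnegative (the Monge--Amp\`ere measure $(dd^c\phi(t))^d$ is a positive measure), so $g'(t)\ge 0$ on $[0,1]$, whence $g(1)=\mathcal E(u,w)\ge g(0)=\mathcal E(v,w)$; this is the first assertion. For concavity I would differentiate (\ref{gprime}) once more. Using $\phi'(t)=u-v$ and expanding $(dd^c\phi(t))^d$ via the multilinearity of the mixed Monge--Amp\`ere operator,
\begin{equation}
\label{gdoubleprime}
g''(t)=d(d+1)\int_{\CC^d}(u-v)\,dd^c(u-v)\wedge(dd^c\phi(t))^{d-1},
\end{equation}
and then apply the integration-by-parts identity (\ref{ibyp1}) (with $A=u$, $B=v$, $C=u$, $D=v$, and the remaining $d-1$ slots all equal to $\phi(t)$, all of which are in $L_{P,+}$) to rewrite the right-hand side as
\begin{equation}
\label{gdoubleprime2}
g''(t)=-d(d+1)\int_{\CC^d}d(u-v)\wedge d^c(u-v)\wedge(dd^c\phi(t))^{d-1}\le 0,
\end{equation}
since $d\psi\wedge d^c\psi\wedge(dd^c\phi(t))^{d-1}$ is a positive measure for real $\psi=u-v$ (it equals $dd^c(\psi^2/2)\wedge(dd^c\phi(t))^{d-1}-\psi\,dd^c\psi\wedge(dd^c\phi(t))^{d-1}$, or more directly is pointwise nonnegative as a current). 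This yields $g''(t)\le 0$, i.e. concavity, and in particular the stated inequality $\mathcal E(tu+(1-t)v,w)\ge t\mathcal E(u,w)+(1-t)\mathcal E(v,w)$.

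The main obstacle is justifying the formal second differentiation, i.e. that $g'$ as given by (\ref{gprime}) is itself differentiable and that its derivative is (\ref{gdoubleprime}); the integrand $(u-v)(dd^c\phi(t))^d$ depends on $t$ through a Monge--Amp\`ere measure, so one must control the $t$-dependence of $(dd^c\phi(t))^d$. This is handled exactly as in \cite{[BB]}: one writes the difference quotient $\frac1h[(dd^c\phi(t+h))^d-(dd^c\phi(t))^d]$ as a telescoping sum of terms $dd^c(\phi(t+h)-\phi(t))\wedge(\cdots)=h\,dd^c(u-v)\wedge(\cdots)$ with mixed wedge factors $\phi(t+h)^j\wedge\phi(t)^{d-1-j}$, uses the weak continuity of mixed Monge--Amp\`ere operators under the uniform convergence $\phi(t+h)\to\phi(t)$ (all masses being uniformly bounded by Proposition \ref{turgay1}, and all functions locally bounded), and passes to the limit $h\to0$; an application of (\ref{ibyp1}) then moves the $dd^c$ off $(u-v)$ to produce the symmetric, manifestly sign-definite expression (\ref{gdoubleprime2}). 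One also notes that all the integrals above are finite and supported in a fixed ball by Remark \ref{totalMA}, so no convergence-at-infinity issues arise. Since the excerpt explicitly says ``the proof is exactly as for Proposition 4.4 of \cite{[BB]} and requires formula (\ref{ibyp1})'', I would simply cite that argument for this technical step rather than reproduce it.
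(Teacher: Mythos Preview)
Your proposal is correct and follows exactly the route the paper indicates (Proposition 4.4 of \cite{[BB]} via (\ref{ibyp1})): compute $g'(t)$ from Proposition \ref{diffprop}, observe it is nonnegative when $u\ge v$, then differentiate once more and use (\ref{ibyp1}) to rewrite $g''(t)$ as minus the integral of the positive current $d(u-v)\wedge d^c(u-v)\wedge(dd^c\phi(t))^{d-1}$. One inessential slip: the measures $(dd^c\phi(t))^d$ are not in general compactly supported---what makes all the integrals finite is that $u-v$ is globally bounded on $\CC^d$ (since $u,v\in L_{P,+}$) and the total Monge--Amp\`ere mass is finite by Remark \ref{totalMA}.
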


A consequence of concavity is the following. Let $u_1,u_2,v\in L_{P,+}$. Letting
$$g(s):= \mathcal E(u_1+s(u_2-u_1),v)$$
for $0\leq s\leq 1$, we have concavity of $g$ so that $g(s)\leq g(0)+g'(0)s$. In particular, at $s=1$, we have $
g(1)\leq g(0)+g'(0)$; i.e.,
\begin{equation}\label{gconcave}\mathcal E(u_2,v)\leq \mathcal E(u_1,v)+(d+1)\int_{\CC^d}(u_2-u_1)(dd^cu_1)^d.
\end{equation}

For future use, we record the following. 

\begin{lemma}
\label{BT63} Let $\{w_j\},\{v_j\}\subset L_{P,+}$ with $w_j\uparrow w\in L_{P,+}$ and $v_j \uparrow v \in L_{P,+}$. Then
$$\mathcal E(w_j,v) \to \mathcal E(w,v) \ \hbox{and} \ \mathcal E(w_j,v_j) \to \mathcal E(w,v).$$
\end{lemma}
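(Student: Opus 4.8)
The plan is to prove the two convergences in sequence, using the energy's monotonicity and concavity from the preceding propositions together with the integration-by-parts formula (\ref{ibyp1}) and the stability result Lemma \ref{BT63} will itself invoke, namely the Bedford-Taylor convergence of Monge-Amp\`ere measures under increasing limits of locally bounded psh functions. First I would handle $\mathcal E(w_j,v)\to\mathcal E(w,v)$. Since $w_j\uparrow w$ and all functions lie in $L_{P,+}$, the differences $w-w_j$ are uniformly bounded and decrease to $0$ pointwise; Dini-type arguments do not apply directly since there is no compactness, but the concavity inequality (\ref{gconcave}) gives the two-sided bound
\begin{equation*}
(d+1)\int_{\CC^d}(w_j-w)(dd^cw)^d \leq \mathcal E(w_j,v)-\mathcal E(w,v) \leq (d+1)\int_{\CC^d}(w_j-w)(dd^cw_j)^d,
\end{equation*}
where I apply (\ref{gconcave}) once with $(u_1,u_2)=(w,w_j)$ and once with $(u_1,u_2)=(w_j,w)$. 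The left integrand $\to 0$ pointwise, is dominated by the bounded function $\|w-w_1\|_\infty$, and $(dd^cw)^d$ has finite total mass $n_d$ by (\ref{norm}), so dominated convergence kills it. For the right-hand integral I write $\int (w_j-w)(dd^cw_j)^d$ and use that $(dd^cw_j)^d \to (dd^cw)^d$ weak-$*$ (Bedford-Taylor) while $w_j-w$ is bounded; more carefully, $0\geq w_j-w\geq -(w-w_1)$ with $w-w_1$ a fixed bounded continuous... no, only bounded, so I instead estimate $|\int(w_j-w)(dd^cw_j)^d|\leq \int(w-w_j)(dd^cw_j)^d$ and control the latter by splitting off a large ball and using that all the measures $(dd^cw_j)^d$ have equal mass $n_d$ and the same compactly-supported comparison from Proposition \ref{turgay1}, then applying the monotone/weak-$*$ convergence on that ball. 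The cleaner route: since $w-w_j \downarrow 0$ and is quasi-continuous, and the measures $(dd^cw_j)^d$ converge weak-$*$ with constant total mass $n_d$, a standard semicontinuity lemma gives $\limsup_j \int(w-w_j)(dd^cw_j)^d \leq \int 0 \,d(\text{limit}) = 0$.

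Next I would deduce $\mathcal E(w_j,v_j)\to\mathcal E(w,v)$ by inserting an intermediate term and using the cocycle property (Proposition \ref{cocycleprop}): write
\begin{equation*}
\mathcal E(w_j,v_j) = \mathcal E(w_j,v) + \mathcal E(v,v_j),
\end{equation*}
since $\mathcal E(w_j,v_j) - \mathcal E(w_j,v) - \mathcal E(v,v_j) = 0$ is exactly the cocycle identity (with antisymmetry $\mathcal E(v,v_j) = -\mathcal E(v_j,v)$). The first term converges to $\mathcal E(w,v)$ by the first part of the proof. For the second term, $\mathcal E(v,v_j)=-\mathcal E(v_j,v)\to -\mathcal E(v,v) = 0$, again by the first part applied to the increasing sequence $v_j\uparrow v$ (note $\mathcal E(v,v)=0$ from the definition (\ref{relendef}), as $u-v\equiv 0$ there). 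Adding, $\mathcal E(w_j,v_j)\to \mathcal E(w,v)+0 = \mathcal E(w,v)$.

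The main obstacle is the justification that $\int_{\CC^d}(w-w_j)(dd^cw_j)^d \to 0$: one needs to upgrade pointwise/quasi-everywhere decrease of the bounded integrand against a weak-$*$ convergent sequence of measures of constant total mass to convergence of the integrals, without continuity of $w-w_j$. I expect this is handled exactly as in \cite{[BB]} (and ultimately rests on \cite{BT88}): because $w-w_j$ is a decreasing sequence of bounded quasi-continuous functions tending to $0$, and because the masses $(dd^cw_j)^d(\CC^d)=n_d$ do not escape to infinity (all these functions agree with $H_P$ off a fixed compact set up to bounded error, so by Proposition \ref{turgay1} the measures are supported where they can be controlled), one fixes $\varepsilon>0$, picks $j_0$ with $w-w_{j_0}<\varepsilon$ off a pluripolar set, and uses the comparison/domination machinery plus $\int(dd^cw_j)^d=n_d$ to bound the tail uniformly in $j$. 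I would present this as "the argument is as in Proposition 4.4 (or the proof of Theorem 4.9) of \cite{[BB]}", citing \cite{BT88} for the underlying Bedford-Taylor convergence, rather than reproducing every estimate.
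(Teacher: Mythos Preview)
Your reduction of the second statement to the first via the cocycle identity (Proposition \ref{cocycleprop}) is exactly what the paper does. For the first statement, however, the paper takes a more direct route: rather than sandwiching $\mathcal E(w_j,v)-\mathcal E(w,v)$ via concavity, it simply applies (the proof of) Lemma 6.3 of \cite{BT88} term by term to the energy. That lemma asserts that for $v_j\uparrow v$ and $u_{i,j}\uparrow u_i$ in $L_{P,+}$ and any fixed $\tilde w\in L_{P,+}$,
\[
\int_{\CC^d}(\tilde w-v_j)\,dd^cu_{1,j}\wedge\cdots\wedge dd^cu_{d,j}\ \longrightarrow\ \int_{\CC^d}(\tilde w-v)\,dd^cu_1\wedge\cdots\wedge dd^cu_d.
\]
Writing $\mathcal E(w_j,v)=-\sum_{k}\int(v-w_j)(dd^cw_j)^k\wedge(dd^cv)^{d-k}$ and taking $\tilde w=v$, $v_j\leftarrow w_j$, and the $u_{i,j}$ equal to $w_j$ or to the constant sequence $v$ handles every term at once.

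Your sandwich is correct in spirit, but the two bounds are interchanged: applying (\ref{gconcave}) with $(u_1,u_2)=(w,w_j)$ gives the \emph{upper} bound $\mathcal E(w_j,v)-\mathcal E(w,v)\le(d+1)\int(w_j-w)(dd^cw)^d$, and $(u_1,u_2)=(w_j,w)$ gives the \emph{lower} bound with $(dd^cw_j)^d$. This is harmless since you argue both sides tend to $0$. More to the point, the ``main obstacle'' you correctly isolate, namely $\int(w-w_j)(dd^cw_j)^d\to 0$, is precisely the special case of the Bedford--Taylor lemma above with all $u_{i,j}=v_j=w_j$ and $\tilde w=w$. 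So your detour through concavity ultimately invokes the same tool the paper cites directly; your argument works, but it is longer and buys nothing over the paper's one-line appeal to \cite{BT88}.
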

\begin{proof} From Proposition \ref{cocycleprop}, it suffices to prove the first statement. This follows directly from the {\it proof} of Lemma 6.3 of \cite{BT88}: {\it given $$w,\{v_j\},v, \{u_{1,j}\},u_1,...,\{u_{d,j}\},u_d \ \hbox{in} \ L_{P,+}$$
with $v_j\uparrow v, \ u_{1,j}\uparrow u_1, ..., u_{d,j}\uparrow u_d$, 
$$\lim_{j\to \infty} \int_{\CC^d} (w-v_j)dd^cu_{1,j}\wedge \cdots \wedge dd^cu_{d,j}=\int_{\CC^d} (w-v)dd^cu_{1}\wedge \cdots \wedge dd^cu_{d}.$$}
\end{proof}

\noindent We remark that if $w_j\downarrow w\in L_{P,+}$ and $v_j \downarrow v \in L_{P,+}$ then we still have
\begin{equation}
\label{downlemma}
\mathcal E(w_j,v) \to \mathcal E(w,v) \ \hbox{and} \ \mathcal E(w_j,v_j) \to \mathcal E(w,v).
\end{equation}
The first statement is standard and the second follows from the first by Proposition \ref{cocycleprop}.

\section {Differentiability of $\mathcal E \circ \Pi$.} \label{diffep}

We turn to the main differentiability result. Our exposition mimics Lemmas 4.10 and 4.11 of  \cite{[BB]}; since this is the key ingredient in proving Theorem \ref{keycn} we include all details. Generally we will fix a function $v\in L_{P,+}$ which will be in the second slot of all energy terms and we simply write, for any $\tilde v\in L_{P,+}$,
$${\mathcal E}(\tilde v):={\mathcal E}(\tilde v,v).$$
If we need to emphasize a specific $v$, we revert to the notation on the right-hand-side of this equation. Recall for $E\subset \CC^d$ closed and an admissible weight $a$ on $E$, we write $\Pi(a)$ (sometimes $\Pi_E(a)$) to denote the regularized weighted $P-$extremal function $V_{P,E,a}^*$.

We state two versions of differentiability of $\mathcal E \circ \Pi$. One version, Proposition \ref{mainprop}, is for a second admissible weight $b$ on $E$ where we consider the perturbed weight $a+t(b-a)$ and the associated weighted $P-$extremal function $\Pi(a+t(b-a))$ and we show the differentiability of 
$$F(t):=\mathcal E( \Pi(a+t(b-a))).$$
Taking $v=\Pi(a)$, as we will in Propositions \ref{mainprop}, \ref{diffpropcor} and Lemma \ref{keylemma2}, 
\begin{equation}\label{zero}F(0)=\mathcal E( \Pi(a))=\mathcal E( \Pi(a),\Pi(a))=0.\end{equation}
If $E$ is unbounded, we will need to make an additional assumption on $u:=b-a$ so that (\ref{loclip}) holds; also, in this case, we restrict to $0\leq t\leq 1$ so that $a+t(b-a)=tb+(1-t)a$, being a convex combination of $a, b$, is admissible on $E$. The second version of differentiability for $\mathcal E \circ \Pi$, Proposition \ref{diffpropcor}, is for a compact set $K$ and an arbitrary real $t$. We take a function $u\in C(K)$, consider the perturbed weight $a+tu$, and show the differentiability of 
$$F(t):=\mathcal E( \Pi(a+tu)).$$
Apriori, since $t\in \RR$, we must assume $u$ is continuous so that $a+tu$ is an admissible (lowersemicontinuous) weight. The following results utilize Lemma \ref{mamass} and Corollary \ref{mamassbis}; hence we assume $C^2-$regularity of $a,b$ and/or $u$. 
  
\begin{proposition} 
\label{mainprop}
Let $v\in L_{P,+}$. For admissible weights $a,b\in C^2(E)$ on a closed set $E\subset \CC^d$, let $u:=b-a$ and let 
$$F(t):={\mathcal E}(\Pi(a+tu),v))$$
for $t\in \RR$. If $E$ is unbounded, assume (\ref{unbhyp}) holds and $0\leq t\leq 1$. Then
\begin{equation}
\label{Pdifft}
F'(t)= (d+1)\int_{\CC^d} u (dd^c \Pi(a+tu))^d.
\end{equation}

\end{proposition}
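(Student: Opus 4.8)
The plan is to differentiate $F(t) = \mathcal E(\Pi(a+tu))$ by composing the two differentiability inputs already available: the chain-rule/perturbation estimate for $\mathcal E$ itself (Proposition \ref{diffprop}, in the form \eqref{diff02} and the concavity inequality \eqref{gconcave}), and the Lipschitz estimate \eqref{loclip}--\eqref{loclip2} for the projection operator $\Pi$, together with the mass-continuity Lemma \ref{mamass} and Corollary \ref{mamassbis}. Write $\Pi_t := \Pi(a+tu)$, so $\Pi_t \in L_{P,+}$ by Remark \ref{compsupp}, and note that $\Pi_s - \Pi_t$ is uniformly bounded on $\CC^d$ for all $s,t$ in the relevant range (by \eqref{loclip}/\eqref{loclip2}, using \eqref{unbhyp} in the unbounded case). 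Fix $t$ and consider the difference quotient $\frac{F(t+h)-F(t)}{h}$. Using \eqref{gconcave} with $(u_1,u_2) = (\Pi_t,\Pi_{t+h})$ in one direction and with $(u_1,u_2) = (\Pi_{t+h},\Pi_t)$ in the other, I would sandwich $F(t+h)-F(t)$ between
$$(d+1)\int_{\CC^d}(\Pi_{t+h}-\Pi_t)(dd^c\Pi_{t+h})^d \quad \hbox{and} \quad (d+1)\int_{\CC^d}(\Pi_{t+h}-\Pi_t)(dd^c\Pi_t)^d.$$

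The next step is to evaluate $\lim_{h\to 0}\frac1h\int_{\CC^d}(\Pi_{t+h}-\Pi_t)(dd^c\Pi_t)^d$ and show it equals $(d+1)\int u\,(dd^c\Pi_t)^d$ after the factor $(d+1)$ is accounted for; the other bound, with $(dd^c\Pi_{t+h})^d$ in place of $(dd^c\Pi_t)^d$, must be shown to have the same limit, and this is where Lemma \ref{mamass}/Corollary \ref{mamassbis} enter. The key geometric fact is that $\Pi_{t+h} = a+(t+h)u$ on $D(t+h) := \{\Pi_{t+h} = a+(t+h)u\}$ and $\Pi_t = a+tu$ on $D(t)$, while $(dd^c\Pi_t)^d$ is supported on $D(t)$ (by Proposition \ref{turgay3}) and $(dd^c\Pi_{t+h})^d$ on $D(t+h)$. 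On $D(t)\cap D(t+h)$ we have $\Pi_{t+h}-\Pi_t = hu$ exactly, so
$$\int_{D(t)\cap D(t+h)}(\Pi_{t+h}-\Pi_t)(dd^c\Pi_t)^d = h\int_{D(t)\cap D(t+h)} u\,(dd^c\Pi_t)^d,$$
and the mass carried by $(dd^c\Pi_t)^d$ on $D(t)\setminus D(t+h)$ tends to $0$ as $h\to 0$ by Lemma \ref{mamass} (applied with the roles of $0$ and $t$, $t+h$ shifted); since $\Pi_{t+h}-\Pi_t = O(|h|)$ uniformly, that leftover contributes $o(h)$. Dividing by $n_d = \int(dd^c\Pi_t)^d$ is not needed; one simply gets $\frac1h\int_{\CC^d}(\Pi_{t+h}-\Pi_t)(dd^c\Pi_t)^d \to \int u\,(dd^c\Pi_t)^d$. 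The symmetric argument for $(dd^c\Pi_{t+h})^d$ uses that its total mass is also $n_d$ (Remark \ref{totalMA}) and the same set-difference mass bound, now requiring the continuity of $t\mapsto$ (mass of $(dd^c\Pi_t)^d$ off $D(t+h)$) from the other side, which is exactly the content of Corollary \ref{mamassbis} / Lemma \ref{mamass}.

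Assembling: both sandwich bounds divided by $h$ converge to $(d+1)\int u\,(dd^c\Pi_t)^d$, so $F'(t)$ exists and equals that value, which is \eqref{Pdifft}. I would handle one-sided derivatives at $t=0$ and $t=1$ (in the unbounded case) exactly as in the remark following Proposition \ref{diffprop}, and note that admissibility of $a+tu = tb+(1-t)a$ on the relevant $t$-range was already arranged by convexity. The main obstacle is the second sandwich bound — controlling $\int_{\CC^d}(\Pi_{t+h}-\Pi_t)(dd^c\Pi_{t+h})^d$ as $h\to 0$ — because here the measure is moving with $h$; this is precisely why Lemma \ref{mamass} and Corollary \ref{mamassbis} were isolated beforehand, and the proof should reduce to citing them. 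A secondary technical point is the unbounded-$E$ case, where one must keep all $D(t)$, $0\le t\le 1$, inside a fixed ball and $u=b-a$ bounded there, i.e.\ invoke \eqref{unbhyp}, so that the constants in \eqref{loclip} and in the mass estimates are uniform in $t$.
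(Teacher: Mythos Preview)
Your approach is essentially correct but genuinely different from the paper's. You run a symmetric sandwich: from \eqref{gconcave} applied with $(u_1,u_2)=(\Pi_t,\Pi_{t+h})$ and with $(u_1,u_2)=(\Pi_{t+h},\Pi_t)$ you trap $F(t+h)-F(t)$ between $(d+1)\int(\Pi_{t+h}-\Pi_t)(dd^c\Pi_{t+h})^d$ and $(d+1)\int(\Pi_{t+h}-\Pi_t)(dd^c\Pi_t)^d$, and then push both sides to $(d+1)\int u\,(dd^c\Pi_t)^d$. The paper instead proves the auxiliary Lemma \ref{keylemma2}, which shows that $F(t)/t$ and $G(t)/t:=\frac{d+1}{t}\int(\Pi_t-\Pi_0)(dd^c\Pi_0)^d$ have the same one-sided limit; the $\limsup$ direction is exactly your upper sandwich bound, but for the $\liminf$ the paper uses a different trick (Proposition \ref{diffprop} applied along the segment from $\Pi(a)$ to $\Pi(a+\delta u)$, then concavity of $\Pi$ and monotonicity of $\mathcal E$). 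The payoff of the paper's asymmetric route is that it only ever integrates against the \emph{fixed} measure $(dd^c\Pi_0)^d$, so the literal statement of Lemma \ref{mamass} suffices.

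Your lower sandwich bound, by contrast, forces you to control $\int_{D(t+h)\setminus D(t)}(dd^c\Pi_{t+h})^d$ with the measure moving with $h$, and then to pass from $\int u\,(dd^c\Pi_{t+h})^d$ to $\int u\,(dd^c\Pi_t)^d$. Neither is a direct citation of Lemma \ref{mamass}/Corollary \ref{mamassbis} as stated. Both are recoverable: re-reading the proof of Lemma \ref{mamass} shows the bound is actually $O(t)$ with constants depending only on $u$ (via $M$) and on the universal mixed masses from Proposition \ref{turgay1}, hence uniform in the base weight; and weak convergence $(dd^c\Pi_{t+h})^d\to(dd^c\Pi_t)^d$ follows from the uniform convergence $\Pi_{t+h}\to\Pi_t$ given by \eqref{loclip}/\eqref{loclip2}. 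You should state these two points explicitly rather than asserting that ``the proof should reduce to citing'' the lemmas --- as written, that is a small gap.
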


\begin{proposition}
\label{diffpropcor}
Let $v\in L_{P,+}$. For an admissible weight $a$ on a compact set $K\subset \CC^d$ and $u\in C^2(K)$, let 
$$F(t):={\mathcal E}(\Pi(a+tu),v)$$
for $t\in \RR$. Then
\begin{equation}
\label{Pdifftcor}
F'(t)= (d+1)\int_{\CC^d} u (dd^c \Pi(a+tu))^d.
\end{equation}
\end{proposition}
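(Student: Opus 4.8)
The plan is to deduce Proposition~\ref{diffpropcor} from Proposition~\ref{mainprop} by the same trick used earlier to pass from the one-sided derivative at $0$ to the derivative at arbitrary $s$, together with the observation that both statements are essentially the same once one handles the admissibility of $a+tu$. First I would fix $s\in\RR$ and set $g(t):=F(s+t)={\mathcal E}(\Pi((a+su)+tu),v)$, so that $g'(0)$ is what we want to compute; this reduces everything to establishing the formula at $t=0$, i.e.\ to showing
\begin{equation}
\label{atzero}
F'(0)=\lim_{t\to 0}\frac{F(t)-F(0)}{t}=(d+1)\int_{\CC^d}u\,(dd^c\Pi(a))^d,
\end{equation}
where here $a$ plays the role of the new base weight $a+su$ (still an admissible weight on the compact set $K$, since $u\in C(K)$ and $K$ is compact so $a+su$ is lowersemicontinuous and nonnegative after possibly adding a constant — note that since we may add constants freely to $a$, admissibility of $a+su$ for all real $s$ is automatic on compact $K$).

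The core estimate in \eqref{atzero} is handled exactly as in the proof of Proposition~\ref{mainprop}: using the cocycle property (Proposition~\ref{cocycleprop}) to reduce to a fixed second slot, writing $F(t)-F(0)={\mathcal E}(\Pi(a+tu))-{\mathcal E}(\Pi(a))$ with $v=\Pi(a)$ so that $F(0)=0$ by \eqref{zero}, and then sandwiching the difference quotient between $(d+1)\int u\,(dd^c\Pi(a))^d$ and $(d+1)\int u\,(dd^c\Pi(a+tu))^d$ using concavity of $\mathcal E$ in the form \eqref{gconcave} applied in both directions (from $\Pi(a)$ to $\Pi(a+tu)$ and back). This bounds the error by something controlled by $\int_{D(0)\setminus D(t)}(dd^c\Pi(a))^d$ (and the analogous term with the roles reversed), together with the Lipschitz estimate \eqref{loclip2} which gives $|\Pi(a+tu)-\Pi(a)|\le C|t|$. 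The crucial convergence $\int_{D(0)\setminus D(t)}(dd^c\Pi(a))^d\to 0$ as $t\to 0$ is precisely Lemma~\ref{mamass}, which is where the hypothesis $u\in C^2(K)$ is used. Combining these, the difference quotient converges to $(d+1)\int u\,(dd^c\Pi(a))^d$, and since $(dd^c\Pi(a+tu))^d$ is supported on $D(t)$ where $\Pi(a+tu)=a+tu$, one also checks these Monge--Amp\`ere masses converge weakly enough to identify the limit on the nose.

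The main obstacle I anticipate is the bookkeeping in the sandwich argument — correctly tracking on which set the extremal function equals the weight (the sets $D(0)$, $D(t)$), and making sure the "error" genuinely lives on $D(0)\setminus D(t)$ (or its symmetric analogue) so that Lemma~\ref{mamass} applies. This is the content already carried out for Proposition~\ref{mainprop}, so here it should be routine: the only difference between the two propositions is cosmetic, namely that Proposition~\ref{mainprop} phrases the perturbation as $a+t(b-a)$ with $b-a\in C^2$ on a possibly unbounded $E$ (invoking Corollary~\ref{mamassbis} and hypothesis \eqref{unbhyp}), whereas here $K$ is compact, $u\in C^2(K)$ is an arbitrary direction, $t$ ranges over all of $\RR$, and no boundedness hypothesis on the contact sets is needed because $K$ itself is compact and \eqref{loclip2} holds with a finite constant. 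So in fact the cleanest write-up is simply: \emph{the proof is identical to that of Proposition~\ref{mainprop}, using Lemma~\ref{mamass} and estimate \eqref{loclip2} in place of Corollary~\ref{mamassbis} and \eqref{loclip}}, followed by the $g(t):=F(s+t)$ shift to extend from $t=0$ to all $t\in\RR$.
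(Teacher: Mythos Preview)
Your final summary is exactly right and matches the paper: Propositions~\ref{mainprop} and~\ref{diffpropcor} are proved \emph{simultaneously}, the only difference being that for compact $K$ one invokes Lemma~\ref{mamass} and \eqref{loclip2} (versus Corollary~\ref{mamassbis} and \eqref{loclip} for unbounded $E$), and the shift $G(t):=F(s+t)$ extends the formula from $t=0$ to all $t$.

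Your middle-paragraph sketch of the mechanics, however, differs from the paper's in how the \emph{lower} bound on the difference quotient is obtained. You propose applying \eqref{gconcave} in both directions; the reverse direction yields an error term living on $D(t)\setminus D(0)$ against $(dd^c\Pi(a+tu))^d$ --- a symmetric analogue of Lemma~\ref{mamass} that the paper never states (it can be proved the same way, and weak convergence $(dd^c\Pi(a+tu))^d\to(dd^c\Pi(a))^d$ is also needed to close the argument). The paper instead isolates this step as Lemma~\ref{keylemma2}: the upper bound indeed comes from \eqref{gconcave}, but the lower bound is obtained via concavity of $\Pi$ (giving $\Pi(a+t\delta u)\geq (1-t)\Pi(a)+t\Pi(a+\delta u)$) combined with monotonicity of $\mathcal E$ and the differentiability of $\mathcal E$ along linear paths from Proposition~\ref{diffprop}. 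This reduces everything to the single ``linearized'' quantity $G(t)=(d+1)\int[\Pi(a+tu)-\Pi(a)](dd^c\Pi(a))^d$, whose limit $A=\lim_{t\to 0^+}G(t)/t$ is then computed using only Lemma~\ref{mamass} as stated. Both routes are valid; the paper's is arranged so that Lemma~\ref{mamass} in its asymmetric form suffices.
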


We prove Propositions \ref{mainprop} and \ref{diffpropcor} simultaneously.

\begin{proof} We may take $v=\Pi(a)$. As in the proof of Proposition \ref{diffprop} we prove only the one-sided limit as $t\to 0^+$:  
\begin{equation}
\label{Pdiff0} 
F'(0):=\lim_{t\to 0^+}\frac{F(t)-F(0)}{t}= (d+1)\int_{\CC^d} u (dd^c \Pi(a))^d.
\end{equation}
This implies (\ref{Pdifft}). For if $s$ is fixed,
$$G(t):=F(s+t)=   {\mathcal E}(\Pi(a+(s+t)u),v))$$
$$= {\mathcal E}(\Pi(a+su+tu),v))$$
and applying (\ref{Pdiff0}) to $G$ (so $a\to a+su$) we get 
$$G'(0)=F'(s)= (d+1)\int_{\CC^d} u (dd^c \Pi(a+su))^d.$$

Note that $F(0)=0$ (see (\ref{zero})) and to verify (\ref{Pdiff0}) it suffices to prove 
\begin{equation}
\label{smooth5.7at0}
\mathcal E(\Pi(a+tu),\Pi(a))=(d+1)t\int_{\CC^d} u(dd^c\Pi(a))^d +o(t).
\end{equation}
We need two ingredients for (\ref{smooth5.7at0}): 
\begin{equation}
\label{item1}
\mathcal E(\Pi(a+tu),\Pi(a))=(d+1)\int_{\CC^d} [\Pi(a+tu)-\Pi(a)](dd^c\Pi(a))^d +o(t)
\end{equation}
and
\begin{equation}
\label{item2}
\lim_{t\to 0} \int_{D(0)\setminus D(t)} (dd^c\Pi(a))^d  =0
\end{equation}
where
$$D(t):=\{z \in \CC^d: \ \Pi(a+tu)(z) = (a+tu)(z)\}.$$
We have proved (\ref{item2}) in Lemma \ref{mamass}.

We state and prove (\ref{item1}) in a separate lemma. Given (\ref{item1}) and (\ref{item2}), and observing from (\ref{suppw}) that 
\begin{equation}
\label{support}
\hbox{supp}(dd^c\Pi(a))^d  \subset D(0),
\end{equation}
(\ref{smooth5.7at0}) follows as in \cite{[BB]}, p. 28:
$$\mathcal E(\Pi(a+tu),\Pi(a))=(d+1)\int_{\CC^d} [\Pi(a+tu)-\Pi(a)](dd^c\Pi(a))^d +o(t)$$
$$=(d+1)\int_{D(0)\setminus D(t)} [\Pi(a+tu)-\Pi(a)](dd^c\Pi(a))^d $$
$$+(d+1)\int_{D(0)\cap D(t)} [\Pi(a+tu)-\Pi(a)](dd^c\Pi(a))^d +o(t)$$
$$=(d+1)\int_{D(0)\setminus D(t)} [\Pi(a+tu)-\Pi(a)](dd^c\Pi(a))^d$$
$$ +(d+1)t\int_{D(0)\cap D(t)} u(dd^c\Pi(a))^d +o(t)$$
$$=(d+1)\int_{D(0)\setminus D(t)} [\Pi(a+tu)-\Pi(a)-tu](dd^c\Pi(a))^d$$
$$ +(d+1)t\int_{D(0)} u(dd^c\Pi(a))^d +o(t)$$
since $\Pi(a+tu)-\Pi(a)= tu$ on $D(0)\cap D(t)$. Now (\ref{loclip}) or (\ref{loclip2}) implies 
$$|\Pi(a+tu)-\Pi(a)-tu| =0(t)$$
on the {\it bounded} set $D(0)\setminus D(t)$
(recall if $E$ is unbounded we assume (\ref{unbhyp}) holds in the setting of Proposition \ref{mainprop}) and this fact, combined with (\ref{item2}) and (\ref{support}), finishes the proof. 
\end{proof}

In (\ref{item1}), since $(dd^c\Pi(a))^d$ is supported in $D(0)$, 
$$\int_{\CC^d} [\Pi(a+tu)-\Pi(a)](dd^c\Pi(a))^d= \int_{D(0)} [\Pi(a+tu)-\Pi(a)](dd^c\Pi(a))^d;$$
and, on $D(t)\cap D(0)$, we have $\Pi(a+tu)-\Pi(a)=tu$. The  content of (\ref{item2}) is that the contribution to this integral on $D(0)\setminus D(t)$ is negligible. The content of (\ref{item1}), Lemma \ref{keylemma2} below, is that the contribution of each of the $d+1$ terms in the energy $\mathcal E(\Pi(a+tu),\Pi(a))$ is the same, up to $o(t)$, as that involving the term $(dd^c\Pi(a))^d$. Again we write
$$F(t):=\mathcal E(\Pi(a+tu)) =\mathcal E(\Pi(a+tu),\Pi(a))$$
$$=\int [\Pi(a+tu)-\Pi(a)][(dd^c \Pi(a+tu))^d +... + (dd^c \Pi(a))^d].$$
Another interpretation of (\ref{item1}) is that to prove the differentiability of $\mathcal E \circ \Pi$, we can replace $\mathcal E$ by its ``linearization'' at $\Pi(a)$. As in previous arguments, we only give the proof at $t=0$ and for the one-sided limit in (\ref{Pdifftcor}) as $t\to 0^+$. The next result does not require smoothness of $u$.

\begin{lemma} 
\label{keylemma2}
For an admissible weight $a$ on $E$ and $u\in C(E)$, let
$$F(t)=\mathcal E(\Pi(a+tu)) $$
$$=\int [\Pi(a+tu)-\Pi(a)][(dd^c \Pi(a+tu))^d +... + (dd^c \Pi(a))^d]$$
and
$$G(t):= (d+1)\int [\Pi(a+tu)-\Pi(a)](dd^c \Pi(a))^d.$$ 
Then
$$\lim_{t\to 0^+} \frac{F(t)-F(0)}{t}=\lim_{t\to 0^+} \frac{G(t)-G(0)}{t}.$$
\end{lemma}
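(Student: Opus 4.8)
The plan is to show that $F(t) - G(t) = o(t)$ as $t \to 0^+$, since then the difference quotients of $F$ and $G$ have the same limit. Write $\phi_t := \Pi(a+tu)$ and $\phi_0 := \Pi(a)$, so that $F(t) = \mathcal E(\phi_t,\phi_0) = \int (\phi_t - \phi_0)\sum_{j=0}^d (dd^c\phi_t)^j \wedge (dd^c\phi_0)^{d-j}$ and $G(t) = (d+1)\int(\phi_t - \phi_0)(dd^c\phi_0)^d$. Both $F(0) = 0$ and $G(0) = 0$. Subtracting, the term $j = 0$ cancels, and
\[
F(t) - G(t) = \sum_{j=1}^{d}\int_{\CC^d}(\phi_t - \phi_0)\bigl[(dd^c\phi_t)^j \wedge (dd^c\phi_0)^{d-j} - (dd^c\phi_0)^d\bigr].
\]
So it suffices to show each of these $d$ terms is $o(t)$.

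First I would telescope each term: for fixed $j$, write $(dd^c\phi_t)^j \wedge (dd^c\phi_0)^{d-j} - (dd^c\phi_0)^d$ as a sum of $j$ terms of the form $(dd^c\phi_t)^{i} \wedge (dd^c\phi_t - dd^c\phi_0) \wedge (dd^c\phi_0)^{d-i-1}$ with $0 \le i \le j-1$. Multiplying by $(\phi_t - \phi_0)$ and integrating, I then apply the integration-by-parts formula (\ref{ibyp1}) — valid since all the functions $\phi_t, \phi_0$ lie in $L_{P,+}$ (by Remark \ref{compsupp}, using that $a,b$, hence $a+tu$, are admissible) — to move $dd^c$ off the factor $(dd^c\phi_t - dd^c\phi_0)$: schematically,
\[
\int (\phi_t-\phi_0)(dd^c\phi_t - dd^c\phi_0)\wedge T = -\int d(\phi_t-\phi_0)\wedge d^c(\phi_t-\phi_0)\wedge T \le 0
\]
where $T$ is the appropriate positive closed current $(dd^c\phi_t)^i \wedge (dd^c\phi_0)^{d-i-1}$. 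This already gives one-sided bounds; to get the $o(t)$ estimate I would instead keep the identity (\ref{ibyp1}) in the form $\int(\phi_t-\phi_0)(dd^c\phi_t-dd^c\phi_0)\wedge T = \int(\phi_t-\phi_0)(dd^c\phi_t-dd^c\phi_0)\wedge T$ and bound the right-hand side using $|\phi_t - \phi_0| = O(t)$ uniformly (this is exactly (\ref{loclip}) under (\ref{unbhyp}), or (\ref{loclip2}) in the compact case) together with the fact that the total mass of $d(\phi_t-\phi_0)\wedge d^c(\phi_t-\phi_0)\wedge T$ tends to $0$. Concretely, that last mass is controlled by $\int(\phi_t-\phi_0)(dd^c\phi_t - dd^c\phi_0)\wedge T$, and since $\phi_t \to \phi_0$ (locally uniformly, the extremal functions depending continuously on the weight on $D(0)$) and the currents $(dd^c\phi_t)^i\wedge(dd^c\phi_0)^{d-i-1}$ have uniformly bounded mass by Proposition \ref{turgay1}, one gets that this quantity is $o(1)$; combined with the factor $\phi_t - \phi_0 = O(t)$ appearing once more outside, the product is $o(t)$.

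The cleanest route, and the one I would actually write, follows \cite{[BB]}: use the integration-by-parts identity to rewrite $F(t) - G(t)$ purely in terms of $\int d(\phi_t - \phi_0)\wedge d^c(\phi_t-\phi_0)\wedge S$ for various positive currents $S$ built from $dd^c\phi_t$ and $dd^c\phi_0$, then observe that each such $S$ has mass bounded by a constant independent of $t$ (Proposition \ref{turgay1} / Remark \ref{totalMA}), that $0 \le \phi_t - \phi_0 + O(t)$ has sup norm $O(t)$, and hence the Cauchy–Schwarz-type bound $\int d g \wedge d^c g \wedge S \le (\sup|g|)\cdot \int dd^c g \wedge S$ forces the whole expression to be $O(t)\cdot o(1) = o(t)$ — here using that $\int dd^c(\phi_t - \phi_0)\wedge S \to 0$ because $\phi_t \downarrow$ or $\uparrow$ along suitable subsequences to $\phi_0$ and Lemma \ref{BT63} (and (\ref{downlemma})) give convergence of the mixed Monge–Ampère masses. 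The main obstacle is precisely this last point: justifying that the mixed masses $\int dd^c(\phi_t-\phi_0)\wedge(dd^c\phi_t)^i\wedge(dd^c\phi_0)^{d-i-1}$ vanish as $t\to 0$, since $\phi_t$ need not be monotone in $t$; the fix is to sandwich, for $t$ small, $\Pi(a) + O(t) \le \Pi(a+tu) \le \Pi(a) + O(t)$ using (\ref{loclip2}) and apply the convergence lemmas to the monotone envelopes $\Pi(a) \pm Ct$, whose mixed masses with the fixed currents converge by continuity of $dd^c$ under uniform convergence of bounded psh functions (Bedford–Taylor). Everything else — the telescoping, the integration by parts, assembling the $o(t)$ — is routine bookkeeping once this convergence is in hand.
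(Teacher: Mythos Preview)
Your approach is genuinely different from the paper's, and it contains a real gap.

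The paper does \emph{not} attempt a direct estimate $F(t)-G(t)=o(t)$. Instead it argues purely with soft properties already on the shelf: concavity of $G$ gives existence of $A:=\lim_{t\to 0^+}G(t)/t$; concavity of $\mathcal E$ (inequality (\ref{gconcave})) gives $F(t)\le G(t)$, hence $\limsup F(t)/t\le A$; and for $\liminf F(t)/t\ge A$ the paper fixes a small $\delta$, uses the differentiability of $\mathcal E$ along the \emph{linear} path $s\mapsto (1-s)\Pi(a)+s\Pi(a+\delta u)$ from Proposition \ref{diffprop}, and then invokes \emph{concavity of the projection} $\Pi$, namely $\Pi(a+t\delta u)\ge (1-t)\Pi(a)+t\Pi(a+\delta u)$, together with monotonicity of $\mathcal E$, to transfer the lower bound to $F(t\delta)$. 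No integration by parts, no convergence of mixed masses.

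Your direct route stalls at the lower bound. After telescoping and applying (\ref{ibyp1}) you correctly obtain
\[
G(t)-F(t)=\sum_i c_i\int d g\wedge d^c g\wedge T_i\ge 0,\qquad g:=\phi_t-\phi_0,
\]
which already recovers $\limsup F(t)/t\le A$. But to get $o(t)$ you invoke the bound ``$\int dg\wedge d^c g\wedge S\le (\sup|g|)\int dd^c g\wedge S$''. This inequality is false as written: by Proposition \ref{turgay1} the right-hand integral $\int dd^c(\phi_t-\phi_0)\wedge S$ is \emph{identically zero} for every $t$ (both $\phi_t,\phi_0\in L_{P,+}$), so your inequality would force $F\equiv G$, which is certainly not true. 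What one actually gets from $\int dg\wedge d^c g\wedge S=-\int g\,dd^c g\wedge S$ is a bound by $\|g\|_\infty$ times the \emph{total variation} of $dd^c g\wedge S$, i.e.\ by $Ct\cdot\int(dd^c\phi_t+dd^c\phi_0)\wedge S=O(t)$, not $o(t)$. There is no second factor of $g$ ``appearing once more outside''; each telescoped piece contains $g$ exactly once. Likewise, the convergence statement you isolate as the ``main obstacle'', $\int dd^c(\phi_t-\phi_0)\wedge S\to 0$, is vacuous for the same reason, and the sandwiching by $\Pi(a)\pm Ct$ does not improve $O(t)$ to $o(t)$ here.

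In short: integration by parts gives you the easy inequality, but the hard inequality really seems to need the concavity-of-$\Pi$ trick that the paper uses (comparing $\Pi(a+t\delta u)$ with the convex combination $(1-t)\Pi(a)+t\Pi(a+\delta u)$ and feeding this into the already-known differentiability of $\mathcal E$). That is the missing idea.
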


\begin{proof} Note that $F(0)=\mathcal E(\Pi(a))=0$ and $G(0)=0$. By concavity of $\Pi$ (recall (\ref{projcon})) and linearity of 
$f\to \int f (dd^c \Pi(a))^d$, the function $G(t)$ is concave so that 
\begin{equation}\label{whatisa} A:=\lim_{t\to 0^+} \frac{G(t)-G(0)}{t}\end{equation}
exists. By concavity of $\mathcal E$, we have (recall (\ref{diff02}))
$$\mathcal E(\Pi(a+tu))\leq \mathcal E(\Pi(a))+<\mathcal E'(\Pi(a)), \Pi(a+tu)-\Pi(a)>;$$
i.e., from (\ref{gconcave}) with $u_1=\Pi(a), \ u_2 = \Pi(a+tu)$ and $v= \Pi(a)$, 
$$\mathcal E(\Pi(a+tu))\leq \mathcal E(\Pi(a))+ (d+1)\int [\Pi(a+tu)-\Pi(a)](dd^c \Pi(a))^d].$$
Thus
$$\limsup_{t\to 0^+} \frac{F(t)-F(0)}{t}\leq A.$$
We prove 
$$\liminf_{t\to 0^+} \frac{F(t)-F(0)}{t}\geq A.$$
Since $A:=\lim_{t\to 0^+} \frac{G(t)-G(0)}{t}$ exists, given $\epsilon >0$ we can choose $\delta >0$ sufficiently small so that 
$$\frac{G(\delta)-G(0)}{\delta}=\frac{d+1}{\delta}\int [\Pi(a+\delta u)-\Pi(a)](dd^c \Pi(a))^d\geq A -\epsilon;$$
i.e.,
$$(d+1)\int [\Pi(a+\delta u)-\Pi(a)](dd^c \Pi(a))^d\geq \delta (A -\epsilon).$$
From Proposition \ref{diffprop}, for $t>0$ sufficiently small we have
$$\frac{\mathcal E(\Pi(a) +t[\Pi(a+\delta u)-\Pi(a)])-\mathcal E(\Pi(a))}{t}$$
$$\geq (d+1)\int [\Pi(a+\delta u)-\Pi(a)](dd^c\Pi(a))^d-\delta \epsilon;$$
i.e.,
$$\mathcal E((1-t)\Pi(a) +t\Pi(a+\delta u))=\mathcal E(\Pi(a) +t[\Pi(a+\delta u)-\Pi(a)])$$
$$\geq \mathcal E(\Pi(a))+t(d+1) \int [\Pi(a+\delta u)-\Pi(a)](dd^c\Pi(a))^d-t\delta \epsilon.$$
Combining these last two inequalities, we have
$$\mathcal E((1-t)\Pi(a) +t\Pi(a+\delta u))\geq \mathcal E(\Pi(a))+t\delta A -2t\delta \epsilon.$$
By concavity of $\Pi$,
$$\Pi(a+t\delta u)=\Pi((1-t)a +t(a+\delta u ))\geq (1-t)\Pi(a) +t\Pi(a +\delta u)$$
so that, by monotonicity of $\mathcal E$,
$$\mathcal E (\Pi(a+t\delta u))\geq \mathcal E((1-t)\Pi(a) +t\Pi(a +\delta u))\geq \mathcal E(\Pi(a))+t\delta A -2t\delta \epsilon$$
for $t>0$ sufficiently small. Thus,
$$\liminf_{t\to 0^+} \frac{F(t)-F(0)}{t}\geq A - 2\epsilon$$
for all $\epsilon >0$, yielding the result. \end{proof}

We now finish the proof of Proposition \ref{mainprop} and Proposition  \ref{diffpropcor} by finding $A$ in (\ref{whatisa}). The proof that $A= \int u(dd^c\Pi(a))^d$ was essentially given in the verification of (\ref{smooth5.7at0}) assuming (\ref{item1}) and (\ref{item2}); for the reader's convenience, we give the details. We write $S_a:=$supp$(dd^c\Pi(a))^d$.
For each $t$, $D(t)=\{z\in \CC^d: \Pi(a+tu)(z) = a(z) +tu(z)\}$ is a bounded set. From Proposition \ref{turgay3}, $\Pi(a)=a$ a.e.-$(dd^c\Pi(a))^d$ on $S_a\subset D(0)$; thus
$$\int [\Pi(a+tu)-\Pi(a)] (dd^c\Pi(a))^d = \int_{S_a} [\Pi(a+tu)-\Pi(a)] (dd^c\Pi(a))^d$$
$$=\int_{D(t)\cap S_a} [\Pi(a+tu)-\Pi(a)] (dd^c\Pi(a))^d$$
$$ + \int_{S_a\setminus D(t)} [\Pi(a+tu)-\Pi(a)](dd^c\Pi(a))^d$$
$$=\int_{D(t)\cap S_a} [a+tu-a] (dd^c\Pi(a))^d + \int_{S_a\setminus D(t)} [\Pi(a+tu)-\Pi(a)] (dd^c\Pi(a))^d$$
$$=\int_{D(t)\cap S_a} tu (dd^c\Pi(a))^d + \int_{S_a\setminus D(t)} [\Pi(a+tu)-\Pi(a)] (dd^c\Pi(a))^d$$
$$=\int_{S_a}tu (dd^c\Pi(a))^d + \int_{S_a\setminus D(t)} [\Pi(a+tu)-\Pi(a)-tu] (dd^c\Pi(a))^d.$$
Now we use the observation (\ref{loclip}) (or (\ref{loclip2})) to see that 
$$|\Pi(a+tu) - \Pi(a)-tu|=0(t)$$
on the bounded set $S_a\setminus D(t)$; the conclusion follows from Lemma \ref{mamass}.

We record an integrated version of Proposition \ref{mainprop} and Proposition \ref{diffpropcor} which we will use. 

\begin{proposition} 
\label{smoothample}
For admissible weights $a,b\in C^2(E)$ on an unbounded closed set $E$ satisfying (\ref{unbhyp}),
\begin{equation}
\label{smooth5.7}
\mathcal E(\Pi(b),\Pi(a))=(d+1)\int_{t=0}^1 dt \int_{\CC^d} (b-a)(dd^c\Pi(a+t(b-a)))^d;
\end{equation}
and for a compact set $K$ with admissible weight $a$ and $u\in C^2(K)$, 
\begin{equation}
\label{smooth5.8}
\mathcal E(\Pi(a+u),\Pi(a))=(d+1)\int_{t=0}^1 dt \int_{\CC^d} u(dd^c\Pi(a+tu))^d.
\end{equation}
\end{proposition}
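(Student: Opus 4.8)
The plan is to obtain Proposition \ref{smoothample} by integrating the pointwise differentiability formulas of Propositions \ref{mainprop} and \ref{diffpropcor} over $t\in[0,1]$, via the fundamental theorem of calculus. For the first identity, set $F(t):=\mathcal E(\Pi(a+t(b-a)),\Pi(a))$ for $0\leq t\leq 1$, where $u:=b-a$. By Proposition \ref{mainprop} (whose hypotheses are exactly the standing assumptions here: $a,b\in C^2(E)$, $E$ unbounded closed, and (\ref{unbhyp}) holding) we know $F$ is differentiable on $[0,1]$ with $F'(t)=(d+1)\int_{\CC^d}(b-a)(dd^c\Pi(a+t(b-a)))^d$. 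Since $F(0)=\mathcal E(\Pi(a),\Pi(a))=0$ and $F(1)=\mathcal E(\Pi(b),\Pi(a))$, the fundamental theorem of calculus gives $\mathcal E(\Pi(b),\Pi(a))=\int_0^1 F'(t)\,dt$, which is exactly (\ref{smooth5.7}). The second identity (\ref{smooth5.8}) is obtained in the same way using Proposition \ref{diffpropcor} in place of Proposition \ref{mainprop}: set $F(t):=\mathcal E(\Pi(a+tu),\Pi(a))$, note $F(0)=0$ and $F(1)=\mathcal E(\Pi(a+u),\Pi(a))$, and integrate $F'(t)=(d+1)\int_{\CC^d}u(dd^c\Pi(a+tu))^d$ from $0$ to $1$.

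The one genuine subtlety — the thing I would be careful to address rather than gloss over — is the justification that $\int_0^1 F'(t)\,dt = F(1)-F(0)$ is legitimate, i.e. that $F$ is not merely differentiable but (locally) absolutely continuous, or at least that $F'$ is integrable. Here I would point out that $F'(t)$ is bounded uniformly in $t\in[0,1]$: the total Monge-Ampère mass $\int_{\CC^d}(dd^c\Pi(a+tu))^d$ equals the constant $n_d=d!\,\mathrm{Vol}(P)$ by (\ref{norm}) (since $\Pi(a+tu)\in L_{P,+}$ by Remark \ref{compsupp}), and the integrand $u=b-a$ is bounded on the relevant bounded set by (\ref{unbhyp}) (respectively $u\in C^2(K)$ with $K$ compact in the second case). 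Moreover $F$ is concave — this is the concavity of $\mathcal E\circ\Pi$ that follows from concavity of $\Pi$ (see (\ref{projcon})) together with concavity/monotonicity of $\mathcal E$ in its first slot — and a concave function on $[0,1]$ is automatically Lipschitz on the interior and continuous up to the endpoints, hence absolutely continuous, so the fundamental theorem of calculus applies. I would also observe that continuity of $F$ at the endpoints $t=0$ and $t=1$ can alternatively be read off from Lemma \ref{BT63} together with the monotone convergence $\Pi(a+tu)\to\Pi(a)$ as $t\to 0^+$ (and similarly at $t=1$), since the $P$-extremal functions vary monotonically in $t$ on each side where the perturbation has a sign — though for general $u$ without a sign one should instead decompose $u=u^+-u^-$ or simply rely on the concavity argument, which is cleanest.

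I expect this to be essentially routine given everything already assembled, so I would keep the proof short: state that the claims follow from Propositions \ref{mainprop} and \ref{diffpropcor} by integration in $t$, cite the uniform bound on $F'$ coming from (\ref{norm}) and (\ref{unbhyp}), invoke the concavity of $F$ to legitimize the fundamental theorem of calculus, and evaluate the endpoint values using $\mathcal E(\Pi(a),\Pi(a))=0$. The main (mild) obstacle is purely the absolute-continuity/integrability bookkeeping just described; there is no new pluripotential-theoretic input required.
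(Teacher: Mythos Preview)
Your proposal is correct and follows essentially the same approach as the paper: define $F(t)=\mathcal E(\Pi(a+t(b-a)),\Pi(a))$, invoke Proposition \ref{mainprop} (resp.\ \ref{diffpropcor}) for $F'(t)$, and integrate from $0$ to $1$ using $F(0)=0$. The paper's proof is in fact terser than yours and does not spell out the absolute-continuity/integrability justification you supply; your additional bookkeeping via the constant Monge--Amp\`ere mass (\ref{norm}) and concavity of $F$ is a welcome clarification but not a different argument.
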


\begin{proof} We prove (\ref{smooth5.7}) as (\ref{smooth5.8}) is similar. We begin with Proposition \ref{mainprop} using $v=\Pi(a)$ so that 
$F(t)={\mathcal E}(\Pi(a+t(b-a)),\Pi(a))$ and (\ref{Pdifft}) becomes
$$F'(t)=(d+1)\int_{\CC^d} (b-a)(dd^c\Pi(a+t(b-a)))^d.$$
Integrating this expression from $t=0$ to $t=1$ gives (\ref{smooth5.7}) since $F(1)-F(0)=\mathcal E(\Pi(b),\Pi(a))$.
\end{proof}

\section {The Main Theorem.}\label{mainth}

In this section, we state and prove the main result which relates asymptotics of certain ball-volume ratios with energies associated with $P-$extremal functions. For $E\subset \CC^d$ closed, following notation in \cite{[BB]}, we let $\phi$ be an admissible weight on $E$. Let
$$\mathcal B^{\infty}(E,n\phi):=\{p_n\in Poly(nP): |p_n(z)^2e^{-2n\phi(z)}|\leq 1 \ \hbox{on}  \ E\}$$
be an $L^{\infty}-$ball and, if $\mu$ is a measure on $E$, let
$$\mathcal B^{2}(E,\mu,n\phi):=\{p_n\in Poly(nP): \int_E |p_n|^2e^{-2n\phi}d\mu\leq 1 \}$$
be an $L^{2}-$ball in $Poly(nP)$. The key result is the following.

\begin{theorem}
\label{keycn} Given $\phi,\phi'$ admissible weights on $E,E'$,
$$\lim_{n\to \infty}\frac{-(d+1)n_d}{2nd_n} [\mathcal B^{\infty}(E,n\phi):\mathcal B^{\infty}(E',n\phi')]= \mathcal E(V_{P,E,\phi}^*, V_{P,E',\phi'}^*).$$
If $\mu,\mu'$ are measures on $E,E'$ where $\mu$ is a Bernstein-Markov measure for $(P,E,\phi)$ and $\mu'$ is a Bernstein-Markov measure for $(P,E',\phi')$, then
$$\lim_{n\to \infty} \frac{-(d+1)n_d}{2nd_n} [\mathcal B^{2}(E,\mu,n\phi):\mathcal B^{2}(E',\mu',n\phi')]= \mathcal E(V_{P,E,\phi}^*,V_{P,E',\phi'}^*).$$
\end{theorem}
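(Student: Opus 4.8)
The plan is to reduce the theorem to the differentiability machinery of Section~\ref{diffep} by realizing the two sides as endpoints of a path of weights. First I would reduce the $L^\infty$ statement to the case $E=E'$, $\phi'$ fixed, $\phi$ varying: using the cocycle property of ball volume ratios, $[\mathcal B^\infty(E,n\phi):\mathcal B^\infty(E',n\phi')]=[\mathcal B^\infty(E,n\phi):\mathcal B^\infty(E'',n\phi'')]+[\mathcal B^\infty(E'',n\phi''):\mathcal B^\infty(E',n\phi')]$ for any auxiliary pair, together with the matching cocycle property of $\mathcal E$ (Proposition~\ref{cocycleprop}); one natural choice is to pass through a common reference set, e.g.\ the torus $T^d$ with the weight $H_P$ from Example~\ref{torus}, for which $V_{P,T^d}=H_P\in L_{P,+}$. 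After this reduction, one wants to prove that along the path $\phi_t:=a+t(b-a)$ (with $a,b$ admissible weights on a single closed $E$, WLOG smooth after a regularization argument), the function
\[
g(t):=\frac{-(d+1)n_d}{2nd_n}\,[\mathcal B^\infty(E,n\phi_t):\mathcal B^\infty(E,n\phi_0)]
\]
converges, as $n\to\infty$, to $\mathcal E(V^*_{P,E,\phi_t},V^*_{P,E,\phi_0})=F(t)-F(0)$ in the notation of Proposition~\ref{smoothample}, and the cleanest route is to differentiate both sides in $t$ and match derivatives.

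The derivative of the continuous side is supplied by Proposition~\ref{mainprop}/\ref{diffpropcor}: $F'(t)=(d+1)\int_{\CC^d}(b-a)(dd^c\Pi(a+t(b-a)))^d$. For the discrete side, I would pass to $L^2$-balls: by the Bernstein-Markov property (for the measures $\mu,\mu'$; and in the $L^\infty$ case one first notes that for $Q$ continuous and $E$ locally regular a strong Bernstein-Markov measure exists, or argues directly that the $L^\infty$ and $L^2$ ball volume ratios differ by $o(nd_n)$), the ball volume ratio $[\mathcal B^2(E,\mu,n\phi_t):\mathcal B^2(E,\mu,n\phi_0)]$ equals $\log\det G_n^{\mu,w_t}-\log\det G_n^{\mu,w_0}$ up to $o(nd_n)$, where $w_t=e^{-\phi_t}$. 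Here Lemma~\ref{1stderiv} gives the $t$-derivative exactly: with $f_n(t)=-\frac{1}{2l_n}\log\det G_n^{\mu,w_t}$ one has $f_n'(t)=\frac{n}{l_n}\int_E(b-a)B_n^{\mu,w_t}\,d\mu$, so
\[
\frac{d}{dt}\Bigl[\frac{-(d+1)n_d}{2nd_n}\log\det G_n^{\mu,w_t}\Bigr]
=(d+1)\,\frac{n_d\,l_n}{n d_n}\,f_n'(t)\cdot\frac{n}{l_n}\cdot\frac{1}{n}
\]
— more carefully, tracking the constants via $l_n/d_n\sim \mathcal A\,(nd/(d+1))$ from Remark~\ref{numbers} and the normalization $n_d=d!\,\mathrm{Vol}(P)$ from (\ref{norm}), the prefactor is arranged precisely so that $\frac{-(d+1)n_d}{2nd_n}\frac{d}{dt}\log\det G_n^{\mu,w_t}=(d+1)\int_E(b-a)\,\beta_n\,d\mu$ where $\beta_n:=\frac{n_d}{d_n}B_n^{\mu,w_t}\,d\mu$ is a probability measure (total mass $n_d$ after rescaling, matching $\int(dd^cH_P)^d=n_d$). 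The crux is then the weak-$*$ convergence $\frac{1}{d_n}B_n^{\mu,w_t}\,d\mu\to\frac{1}{n_d}(dd^c V^*_{P,E,\phi_t})^d$, equivalently convergence of the Bergman measures to the Monge-Amp\`ere measure of the weighted extremal function; this is where Propositions~\ref{turgay2}, \ref{turgay3}, \ref{turgay4} and the Bernstein-Markov hypothesis enter (the Bergman function is $\asymp e^{2n(V^*_{P,E,\phi_t}-\phi_t)}$ on $E$ by those extremal-function estimates, so the measures concentrate on $S_{w_t}$ and their limits are Monge-Amp\`ere measures).

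Once the two derivatives are shown to agree in the limit — i.e.\ $g_n'(t)\to F'(t)$ pointwise in $t$, with a uniform domination in $t$ coming from $\sup_E|b-a|$ and the total-mass bound $\int\beta_n=n_d$ — I would upgrade pointwise-derivative convergence to convergence of the functions themselves by integrating in $t$ over $[0,1]$ and applying dominated convergence (the integrands are uniformly bounded by $(d+1)n_d\sup_E|b-a|$), using that $g_n(0)=0=F(0)$. This yields $g_n(1)\to F(1)=\mathcal E(V^*_{P,E,\phi_1},V^*_{P,E,\phi_0})$, which after undoing the cocycle reductions is exactly the claimed limit for both the $L^2$ and $L^\infty$ versions. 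The main obstacle I expect is precisely the Bergman-measure convergence step: proving $\frac{1}{d_n}B_n^{\mu,w_t}\,d\mu$ converges weak-$*$ to the normalized Monge-Amp\`ere measure of $V^*_{P,E,\phi_t}$ requires the full force of the weighted extremal-function theory (upper bound from Proposition~\ref{turgay4}, lower bound from a Bernstein-Markov/peak-function argument built on Proposition~\ref{turgay2}, and identification of the limit on the contact set via Proposition~\ref{turgay3}); handling this uniformly enough in the parameter $t$ — and dealing with the unbounded-$E$ case where one must invoke hypothesis (\ref{unbhyp}) to keep the contact sets $D(t)$ in a fixed ball, as flagged after (\ref{loclip}) — is the delicate part of the argument, and it is exactly the place where the proof parallels, and must adapt, the corresponding steps in \cite{[BB]} and \cite{BBN}.
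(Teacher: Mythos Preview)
Your overall architecture---connect the weights by a path, differentiate the Gram determinant via Lemma~\ref{1stderiv}, match with the energy derivative from Proposition~\ref{mainprop}/\ref{smoothample}, then integrate in $t$---is exactly the paper's strategy. The cocycle reduction is also right. But there is a genuine gap at the step you yourself flag as the crux.

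You propose to feed in the weak-$*$ convergence $\frac{1}{d_n}B_n^{\mu,w_t}\,d\mu\to\frac{1}{n_d}(dd^c V^*_{P,E,\phi_t})^d$ for a \emph{general} Bernstein--Markov triple $(P,E,\phi_t)$, and you sketch a proof from Propositions~\ref{turgay2}--\ref{turgay4}. That convergence, however, is precisely Corollary~\ref{strongasymp}, which the paper obtains \emph{as a consequence} of Theorem~\ref{keycn} (via Proposition~\ref{genres}); using it as an input would be circular. Your sketch does give the upper bound $\frac{1}{2n}\log B_n^{\mu,w_t}\le V^*_{P,E,\phi_t}-\phi_t+o(1)$ and hence concentration on the contact set, but the identification of the limit with the Monge--Amp\`ere measure---the actual content---does not follow from those propositions alone.

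The paper avoids this by taking as its sole analytic input Theorem~\ref{bermancn}: Berman's Bergman asymptotics in the very special situation $E=\CC^d$, $\mu=\omega_d$, $\phi\in C^{1,1}$ strongly admissible. Case~1 runs your path argument there (with $\phi'=\phi$ outside a ball so (\ref{unbhyp}) holds); Case~2 drops the ``equal outside a ball'' hypothesis by noting the $L^\infty$-balls and extremal functions depend only on $\phi$ near $S_w$. The general $L^\infty$-Case~3 is then obtained not by a direct Bergman argument on $E$, but by a \emph{two-sided squeeze}: one chooses smooth strongly admissible $\phi_j\uparrow\hat\psi$ (an lsc extension of $\phi$ to $\CC^d$) and $\psi_j\downarrow\Pi_E(\phi)$, applies Case~2 to each, and uses monotonicity (\ref{bvmon}) of ball volume ratios together with the energy continuity of Lemma~\ref{BT63} and (\ref{downlemma}) to trap the $\liminf$ and $\limsup$. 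Your ``WLOG smooth after a regularization argument'' hides this squeeze, which is where the work lies; in particular the approximation from \emph{above} by smooth weights on $\CC^d$ (not on $E$) is what lets Theorem~\ref{bermancn} do all the heavy lifting.
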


\begin{remark} \label{rmk42} Taking $E'=T$ and $\phi' =0$, from (\ref{torusextr}) we have $V_{P,E',\phi'}^*=H_P$. Now taking $\mu'=\mu_T$ and taking $(K,\mu,Q)$ for the triple $(E,\mu,\phi)$ where $K$ is compact and $\mu$ is a Bernstein-Markov measure for $(P,K,Q)$, we verify Conjecture \ref{conj}. We use (\ref{gramtorus}) and (\ref{key}) to obtain (\ref{gnlimit}), the existence of the limit
\begin{equation}\label{wbmfinal} \lim_{n\to\infty}{\frac{1}{2l_n}} \log {\rm det}(G_n^{\mu,w})=\frac{-1}{n_dd\mathcal A}\mathcal E(V_{P,K,Q}^*,H_P)=\log {\mathcal F_P(K,Q)}.\end{equation}
Thus we obtain the asymptotics of weighted Gram determinants associated to $(K,\mu,Q)$ as well as the other results mentioned in Section \ref{sec:back}: the existence of the limit of the scaled maximal weighted Vandermondes
$$ \delta^w(K):=\lim_{n\to \infty}\delta^{w}_n(K)=\mathcal F_P(K,Q)$$
in (\ref{deltaw}) and Proposition \ref{tfd} on $P-$optimal measures.

\end{remark}

The first step of the proof is a version of Bergman asymptotics in a special case.

\subsection {Weighted Bergman asymptotics in $\CC^d$.}\label{sec:bergcd}

We state a result on Bergman asymptotics in \cite{[B1]}. The setting is this: $\phi\in C^{1,1}(\CC^d)$ with 
\begin{equation}
\label{stradm}
\phi (z) \geq (1+\epsilon)H_P(z) \ \hbox{for}  \ |z|>>1 \ \hbox{for some}  \ \epsilon >0.
\end{equation}
We will call a global admissible weight $\phi$ satisfying (\ref{stradm}) {\it strongly admissible}. For $p_n\in Poly(nP)$, we write 
$$||p_n||_{n\phi}^2:=||p_n||_{\omega_d,n\phi}^2=\int_{\CC^d} |p_n(z)|^2 e^{-2n\phi(z)}\omega_d(z)$$
where $\omega_d$ is Lebesgue measure on $\CC^d$. Using (\ref{sigmainkp2}), under the growth assumption on $\phi$, if $n>\frac{d}{\epsilon kA}$ where $P \subset A\Sigma$ then for each polynomial $p_n\in Poly(nP)$, $||p_n||_{n\phi}<+\infty$. 

Given an orthonormal basis $\{q_1,...,q_{d_n}\}$ of $Poly(nP)$, in this section we use the notation
$$B_{n,\phi}(z):=[\sum_{j=1}^{d_n} |q_j(z)|^2]e^{-2n\phi(z)}$$
for the $n$-th Bergman function; and we recall that
$$B_{n,\phi}(z)=\sup_{p_n\in Poly(nP)\setminus \{0\}} |p_n(z)|^2e^{-2n\phi(z)}/||p_n||_{n\phi}^2.$$
Finally, let 
$$S:=\{z\in \CC^d: dd^c\phi(z) \ \hbox{exists and} \ dd^c\phi(z)>0\}$$
and if $u$ is a $C^{1,1}$ function such that $(dd^cu)^d$ is absolutely continuous with respect to Lebesgue measure, we write
$$\det (dd^c u)\omega_d :=(dd^c u)^d.$$

\begin{theorem}
\label{bermancn} Given $\phi\in C^{1,1}(\CC^d)$ satisfying (\ref{stradm}), we have the following: $V_{P,\CC^d,\phi}\in C^{1,1}(\CC^d)$;  $(dd^cV_{P,\CC^d,\phi})^d$ has compact support and is absolutely continuous with respect to Lebesgue measure; 
$$(dd^cV_{P,\CC^d,\phi})^d=\det (dd^cV_{P,\CC^d,\phi})\omega_d$$
as $(d,d)-$forms with $L^{\infty}_{loc}(\CC^d)$ coefficients; and a.e. on the set $D:=\{V_{P,\CC^d,\phi} =\phi\}$ we have $\det (dd^c\phi)=\det (dd^cV_{P,\CC^d,\phi})$. Moreover, 
$$ \frac{n_d}{d_n}B_{n,\phi}\to \chi_{D\cap S} \det (dd^c\phi) \ \hbox{in} \  L^1(\CC^d)$$ and the measures
$$\frac{n_d}{d_n}B_{n,\phi}\omega_d \to (dd^cV_{P,\CC^d,\phi})^d \ \hbox{weakly}.$$
\end{theorem}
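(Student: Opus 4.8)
The plan is to establish Theorem \ref{bermancn} by transferring the corresponding statement for the homogeneous (degree $\le An$) setting in $\CC^d$ into the $Poly(nP)$ setting, exploiting the containment $Poly(nP)\subset \mathcal P_{An}$ together with the hypothesis (\ref{sigmainkp}) that $\Sigma\subset kP$. The regularity assertions for $V_{P,\CC^d,\phi}$ (that it lies in $C^{1,1}$, that $(dd^cV_{P,\CC^d,\phi})^d$ is compactly supported and absolutely continuous, and that $\det(dd^c\phi)=\det(dd^cV_{P,\CC^d,\phi})$ a.e.\ on $D$) are the analogues in our weighted class of the classical results of Berman--Berndtsson--Sj\"ostrand type obtemplate-type regularity theory for the obstacle problem; I would cite \cite{[B1]} for these, noting that the only input needed is the strong admissibility (\ref{stradm}) which forces $V_{P,\CC^d,\phi}-\phi$ to have compact support (by (\ref{sigmainkp2})) and hence reduces the problem to a bounded region where the classical theory applies verbatim.

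For the main convergence statements, the first step is to recall the extremal characterization $B_{n,\phi}(z)=\sup\{|p_n(z)|^2e^{-2n\phi(z)}/\|p_n\|_{n\phi}^2:\ p_n\in Poly(nP)\setminus\{0\}\}$, from which one gets the pointwise upper bound $\frac1n\log B_{n,\phi}(z)\le 2(V_{P,\CC^d,\phi}^*(z)-\phi(z))+o(1)$ using Proposition \ref{turgay4} (or directly the submean value property), so that $B_{n,\phi}$ decays exponentially off a neighbourhood of the compact set $D$; in particular the $L^1$ mass concentrates near $D\cap S$. The second step is the local upper and lower Bergman bound: near a point $z_0$ where $\phi$ is $C^{1,1}$ and $V_{P,\CC^d,\phi}=\phi$ with $dd^c\phi(z_0)>0$, a local H\"ormander $\bar\partial$-estimate (peak section construction) combined with the sub-mean value inequality gives $\frac{n_d}{d_n}B_{n,\phi}(z_0)\to \det(dd^c\phi)(z_0)$ pointwise a.e.\ on $D\cap S$; this is exactly the local Bergman asymptotics argument in \cite{[B1]}, and the point of (\ref{sigmainkp})/(\ref{sigmainkp2}) is that $H_P$ is comparable to $\max_j\log^+|z_j|$ up to the factor $k$, so the scaling that produces the Bergman kernel of $\CC^d$ at $z_0$ is unaffected by which polytope $P$ we use (the local model only sees the germ of $\phi$). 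The third step is to upgrade pointwise a.e.\ convergence to $L^1$ convergence: one has $\int_{\CC^d}\frac{n_d}{d_n}B_{n,\phi}\,\omega_d=\frac{n_d}{d_n}\cdot d_n= n_d$ for each $n$ (by the reproducing/trace identity $\int \sum|q_j|^2e^{-2n\phi}\omega_d=d_n$), while by Proposition \ref{turgay1}/Remark \ref{totalMA} the limit candidate satisfies $\int_{\CC^d}\chi_{D\cap S}\det(dd^c\phi)\,\omega_d=\int_{\CC^d}(dd^cV_{P,\CC^d,\phi})^d=n_d$ as well; so the masses match, and Scheff\'e's lemma (nonnegative integrands, pointwise a.e.\ convergence, equal total integrals) upgrades to $L^1$ convergence. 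The weak convergence of measures $\frac{n_d}{d_n}B_{n,\phi}\omega_d\to(dd^cV_{P,\CC^d,\phi})^d$ is then immediate from $L^1$ convergence of densities, once one identifies $\chi_{D\cap S}\det(dd^c\phi)\,\omega_d=(dd^cV_{P,\CC^d,\phi})^d$ — and this last identity is precisely the content of the a.e.\ equality $\det(dd^c\phi)=\det(dd^cV_{P,\CC^d,\phi})$ on $D$ together with the facts that $(dd^cV_{P,\CC^d,\phi})^d$ is supported in $D$ (from (\ref{suppw})) and vanishes off $S$ (where $V_{P,\CC^d,\phi}$ is pluriharmonic, being equal to $\phi$ with $dd^c\phi=0$, or strictly below $\phi$).

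The step I expect to be the main obstacle is the local lower bound in the Bergman asymptotics, i.e.\ producing, for a.e.\ $z_0\in D\cap S$, a sequence of polynomials $p_n\in Poly(nP)$ with controlled $L^2(e^{-2n\phi}\omega_d)$ norm that peak at $z_0$ with the correct normalization $\frac{n_d}{d_n}|p_n(z_0)|^2e^{-2n\phi(z_0)}/\|p_n\|_{n\phi}^2\to\det(dd^c\phi)(z_0)$. The difficulty is that $Poly(nP)$ is not the full space $\mathcal P_{An}$, so one cannot simply quote the $\CC^d$ Bergman asymptotics; one must check that the peak sections constructed via H\"ormander estimates against the weight $n\phi$ can be taken (or corrected) to lie in $Poly(nP)$, which is where the hypothesis $\Sigma\subset kP$ enters decisively: it guarantees $nP$ contains a translate of a fixed-shape simplex scaled by $n/k$, so the monomial exponents available in $Poly(nP)$ fill out a full-dimensional region growing linearly in $n$, which is exactly what is needed for the local scaling near $z_0$ to see the standard model. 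I would handle this by following the argument in \cite{[B1]} (or \cite{Bay}) adapted to $Poly(nP)$, checking at each use of a $\bar\partial$-solution that the resulting holomorphic function has polynomial growth placing it in $Poly(nP)$, using the strong admissibility of $\phi$ to control growth at infinity and (\ref{sigmainkp2}) to relate that growth to membership in $nP$. The remaining steps (the mass identity, Scheff\'e, weak convergence, and identification of the limit measure) are then routine given the earlier propositions.
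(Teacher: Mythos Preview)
The paper does not supply a proof of Theorem \ref{bermancn}: it is introduced with the sentence ``We state a result on Bergman asymptotics in \cite{[B1]}'' and is used as a black box. So there is nothing in the paper to compare your argument against; the authors simply import Berman's theorem, implicitly asserting that the proof in \cite{[B1]} (written for the standard Lelong class, i.e.\ $P=\Sigma$) goes through unchanged in the $L_{P,+}$ setting once one has (\ref{sigmainkp}) and the strong admissibility (\ref{stradm}).

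Your outline is a faithful reconstruction of the \cite{[B1]} strategy: $C^{1,1}$ regularity of the envelope via the obstacle problem, local peak-section lower bounds from H\"ormander $\bar\partial$-estimates, sub-mean-value upper bounds, exponential decay off $D$, and Scheff\'e's lemma to pass from a.e.\ pointwise convergence plus equality of total masses to $L^1$ convergence. You have also correctly located the one place where the passage from $\mathcal P_n$ to $Poly(nP)$ requires attention, namely that the H\"ormander-produced peak sections must land in $Poly(nP)$ and not merely in some $\mathcal P_{An}$. This is exactly the point the paper glosses over by citation; your observation that (\ref{sigmainkp}) gives $nP$ a simplex of linear size (hence enough lattice points for the local model) and that (\ref{stradm}) together with (\ref{sigmainkp2}) forces the $\bar\partial$-correction to have the right polynomial growth is the right diagnosis. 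One small correction: in your final parenthetical you say $(dd^cV_{P,\CC^d,\phi})^d$ vanishes off $S$ because $V_{P,\CC^d,\phi}$ is ``pluriharmonic'' there; what you want is that on $D\setminus S$ one has $\det(dd^c\phi)=0$ a.e., and off $D$ the Monge--Amp\`ere vanishes since $V_{P,\CC^d,\phi}$ is locally a maximal psh function there --- ``pluriharmonic'' is too strong.
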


\noindent Recall the (strong) admissibility of $\phi$ implies, by Proposition \ref{turgay3}, that $(dd^cV_{P,\CC^d,\phi})^d$ has compact support.

\begin{remark}\label{import} From \cite{bloom}, $(D,\omega_d|_D,\phi|_D)$ satisfies a weighted Bernstein-Markov property for $\mathcal P_n$ or $A\mathcal P_n$; from Remark \ref{ap}, $\omega_d|_D$ is a Bernstein-Markov measure for the triple $(P,D,\phi)$. Using Proposition \ref{turgay4},  
$$\sup_{\CC^d}|p_n e^{-n\phi}|=\sup_{D}|p_n e^{-n\phi}|$$
for $p_n\in Poly(nP)$. Hence, from (\ref{wtdbm}), 
$$\sup_{\CC^d}|p_n e^{-n\phi}| \leq M_n[\int_{D}|p_n|^2e^{-2n\phi}\omega_d]^{1/2}\leq M_n [\int_{\CC^d}|p_n|^2e^{-2n\phi}\omega_d]^{1/2}$$
where $M_n^{1/n}\to 1$. This last integral is finite by (\ref{sigmainkp2}). 
\end{remark}

\subsection {Proof of the Main Theorem.} \label{pomt}
We consider several cases.
\medskip

\noindent{\sl Case 1: $E=E'=\CC^d$ and $\phi, \phi'\in C^2(\CC^d)$ strongly admissible with $\phi'=\phi$ outside a ball ${\mathcal B}_R$ for some $R$; $d\mu=d\mu'=\omega_d$}:
\medskip

We begin in the $L^2-$Case 1. Note that (\ref{unbhyp}) holds for then all of the weights $\phi+t(\phi'-\phi)$ are strongly admissible with a uniform $\epsilon$ (recall (\ref{stradm})). Let $u:=\phi'-\phi$; then $u$ is continuous with compact support. For $0\leq t\leq 1$ let 
$$\phi_t:=\phi +tu= \phi +t(\phi'-\phi)=(1-t)\phi + t\phi'$$
so that $\phi_0=\phi$ and $\phi_1=\phi'$; equivalently, $w_t(z):=w(z)\exp(-tu(z))$ (note $w_0 =w=e^{-\phi}$ and $w_1=w'=e^{-\phi'}$). Then from Theorem \ref{bermancn}, for each $t$, 
$$\frac{n_d}{d_n}B_{n,\phi+tu}\cdot  \omega_d \to (dd^c  \Pi(\phi+tu))^d \ \hbox{weakly}.$$

Now set $$f_n(t):= -{1\over 2l_n}\log\,{\rm det}(G_n^{\mu,w_t}(\beta_n))$$ where $\mu=\mu_n:= \omega_d$ for all $n$ and the basis $\beta_n:=\{p_1,...,p_{d_n}\}$ of $Poly(nP)$ is chosen to be an orthonormal basis with respect to the weighted $L^2-$norm $p\to ||w^np||_{L^2(\mu)}$. Then $G_n^{\mu,w}(\beta_n)$ is the $d_n\times d_n$ identity matrix so that we have $f_n(0)=0$; and, using Lemma \ref{1stderiv} and the fact that $u$ has compact support (thus all weights $w_t$ are admissible), 
$$\lim_{n\to \infty}\frac{l_n}{nd_n} f_n'(t)=\lim_{n\to \infty}\frac{1}{d_n} \int uB_{n,\phi+tu}\omega_d =\frac{1}{n_d} \int u(dd^c  \Pi(\phi+tu))^d .$$
We now integrate $\frac{l_n}{nd_n} f_n'(t)$ from $t=0$ to $t=1$:
$$\frac{l_n}{nd_n} [f_n(1) -f_n(0)]=\frac{l_n}{nd_n} [f_n(1)]= \frac{-1}{2nd_n}\log\,{\rm det}(G_n^{\mu,w'}(\beta_n))$$
$$= \frac{-1}{2nd_n}[\mathcal B^{2}(\CC^d,\mu,n\phi):\mathcal B^{2}(\CC^d,\mu,n\phi')] \ \hbox{(from (\ref{gramvolume}))}$$
$$=\frac{1}{d_n}\int_{t=0}^1 dt \int  B_{n,\phi+tu}(\phi-\phi')\omega_d \ \hbox{(from Lemma \ref{1stderiv})}$$
$$\to \frac{1}{n_d}\int_{t=0}^1 dt \int (\phi-\phi')(dd^c  \Pi(\phi+tu))^d.$$
But by (\ref{smooth5.7}), since (\ref{unbhyp}) holds, 
$$(d+1)\int_{t=0}^1 dt \int (\phi-\phi')(dd^c  \Pi(\phi+tu))^d=\mathcal E(\Pi(\phi'),\Pi(\phi))$$
which proves Theorem \ref{keycn} in $L^2-$Case 1. By Remark \ref{import} this also proves the $L^{\infty}-$Case 1.  
\medskip

\noindent{\sl Case 2: $E=E'=\CC^d$ and $\phi, \phi'\in C^2(\CC^d)$ strongly admissible; $d\mu=d\mu'=\omega_d$}: 
\medskip

We first do the $L^{\infty}-$Case 2. Remark \ref{compsupp} and Proposition \ref{turgay2} imply that $$\Pi(\phi)=\Pi_{S_w}(\phi|_{S_w})$$
where $S_w=\hbox{supp}(dd^c\Pi(\phi))^d$ is compact; moreover, 
for $p_n\in Poly(nP)$, from Proposition \ref{turgay4}, $||p_ne^{-n\phi}||_{S_w}=||p_ne^{-n\phi}||_{\CC^d}$ so that
$${\mathcal B}^{\infty}(S_w,n\phi|_{S_w})={\mathcal B}^{\infty}(\CC^d,n\phi).$$
Thus modifying $\phi, \phi'$ outside a large ball in such a way to make them equal outside a perhaps larger ball, we neither change the $L^{\infty}-$ball volume ratios nor the $P-$extremal functions $\Pi(\phi), \Pi(\phi')$. Hence the $L^{\infty}-$Case 2 follows from the $L^{\infty}-$Case 1. By Remark \ref{import}  this also proves the $L^2-$Case 2.
\medskip

\noindent{\sl Case 3 (general): $E,E'\subset \CC^d$ closed with admissible weights $\phi,\phi'$; $\mu, \mu'$ Bernstein-Markov measures for $(P,E,\phi),(P,E',\phi')$}:
\medskip

We consider the $L^{\infty}-$Case 3 only; the $L^2-$Case 3 follows from the definition of Bernstein-Markov measure for $(P,E,\phi),(P,E',\phi')$. We claim that by the cocycle property for the ball volume ratios $[A:B]$ and energies $\mathcal E(u_1,u_2)$, we may assume that one of the sets is $\CC^d$  
with a strongly admissible $C^2(\CC^d)$ weight $\hat \phi$. For, using the notation $\Pi_E(\phi):=V_{P,E,\phi}^*$, we have
$$ \mathcal E(\Pi_E(\phi),\Pi_{E'}(\phi')) =- \mathcal E(\Pi_{E'}(\phi'),\Pi_{\CC^d}(\hat \phi)) + \mathcal E(\Pi_E(\phi),\Pi_{\CC^d}(\hat \phi)).$$
Both terms on the right have the second term being $\Pi_{\CC^d}(\hat \phi)$. Similarly, with respect to the ball volume ratios, for each $n$ we have
$$[{\mathcal B}^{\infty}(E,n\phi):{\mathcal B}^{\infty}(E',n\phi')] $$
$$=-[{\mathcal B}^{\infty}(E',n\phi'): {\mathcal B}^{\infty}(\CC^d,n\hat \phi)]+ [{\mathcal B}^{\infty}(E,n\phi):{\mathcal B}^{\infty}(\CC^d,n\hat \phi)].$$

Now to deduce the case where one of the sets is $\CC^d$  
with a strongly admissible $C^2(\CC^d)$ weight $\hat \phi$ and the other is a general closed set $E$ with admissible weight $\phi$ from Case 2 where both sets are $\CC^d$ 
with strongly admissible $C^2(\CC^d)$ weights $\hat \phi, \psi$, we first observe that we may assume $E$ is compact (i.e., bounded). For
recall again from Proposition \ref{turgay2} that if $w=e^{-\phi}$, $\Pi_E(\phi)=\Pi_{S_w}(\phi|_{S_w})$ where $S_w=\hbox{supp}(dd^c\Pi_E(\phi))^d$ is compact; and for $p_n\in Poly(nP)$, $||p_ne^{-n\phi}||_{S_w}=||p_ne^{-n\phi}||_E$ so that 
$${\mathcal B}^{\infty}(S_w,n\phi|_{S_w})={\mathcal B}^{\infty}(E,n\phi).$$
Thus we assume $E$ is compact; since $V_{P,E,\phi}^*\in L_{P,+}$, we can also assume $\phi$ is bounded above on $E$. We take a large sublevel set $B_R:=\{z\in \CC^d: H_P(z) < \log R\}$ containing $E$ and extend $\phi$ from $E$ to $\hat \psi$ on $\CC^d$:
$$\hat \psi := \phi \ \hbox{on} \ E; \ \hat \psi =2\log R  \ \hbox{on} \ B_R \setminus E; \ \hat \psi =2kH_P(z) \ \hbox{on} \ \CC^d \setminus B_R.$$
We have $\hat \psi$ is lowersemicontinuous and by taking $R$ sufficiently big $\Pi_{\CC^d}(\hat \psi)=\Pi_E(\phi)$; then   we take a sequence of strongly admissible $C^2(\CC^d)$ weights $\{\phi_j\}$ with $\phi_j \uparrow \hat \psi$. We can apply Case 2 to $(\CC^d, \phi_j)$ and $(\CC^d,\hat \phi)$ to conclude
$$\lim_{n\to \infty}\frac{-(d+1)n_d}{2nd_n} [\mathcal B^{\infty}(\CC^d,n \phi_j):\mathcal B^{\infty}(\CC^d,n\hat \phi)]= \mathcal E(\Pi_{\CC^d}(\phi_j), \Pi_{\CC^d}(\hat \phi)).$$
But $\phi_j \uparrow \hat \psi$ implies $\Pi_{\CC^d}(\phi_j)\uparrow \Pi_{\CC^d}(\hat \psi)=\Pi_E(\phi)$ and hence 
\begin{equation}
\label{ephij}
\mathcal E(\Pi_{\CC^d}(\phi_j), \Pi_{\CC^d}(\hat \phi)) \ \hbox{converges to} \ \mathcal E(\Pi_E(\phi), \Pi_{\CC^d}(\hat \phi))
\end{equation}
as $j\to \infty$ by Lemma \ref{BT63}. 

We want to conclude that
\begin{equation}
\label{ephin}
\lim_{n\to \infty} \frac{-(d+1)n_d}{2nd_n} [\mathcal B^{\infty}(E,n \phi):\mathcal B^{\infty}(\CC^d,n\hat \phi)]= \mathcal E(\Pi_E(\phi), \Pi_{\CC^d}(\hat \phi)).
\end{equation}
To this end, first observe that 
$$-\mathcal E(\Pi_{\CC^d}(\phi_j), \Pi_{\CC^d}(\hat \phi))=\lim_{n\to \infty} \frac{(d+1)n_d}{2nd_n} [\mathcal B^{\infty}(\CC^d,n \phi_j):\mathcal B^{\infty}(\CC^d,n\hat \phi)]$$
$$\leq \liminf_{n\to \infty} \frac{(d+1)n_d}{2nd_n}[\mathcal B^{\infty}(E,n \phi):\mathcal B^{\infty}(\CC^d,n\hat \phi)]$$
$$\leq \limsup_{n\to \infty} \frac{(d+1)n_d}{2nd_n}[\mathcal B^{\infty}(E,n \phi):\mathcal B^{\infty}(\CC^d,n\hat \phi)]$$
since $\Pi_{\CC^d}(\phi_j)\uparrow \Pi_E(\phi)$ implies from (\ref{bvmon}) that 
$$[\mathcal B^{\infty}(\CC^d,n \phi_j):\mathcal B^{\infty}(\CC^d,n\hat \phi)] \leq [\mathcal B^{\infty}(E,n \phi):\mathcal B^{\infty}(\CC^d,n\hat \phi)].$$
Now we take a sequence of smooth, strongly admissible weights $\{\psi_j\}$ on $\CC^d$ with $\psi_j \downarrow \Pi_E(\phi)$; e.g., we may take $\psi_j =(1+\epsilon_j)[(\Pi_E(\phi))_{\epsilon_j}]$ where $(\Pi_E(\phi))_{\epsilon_j}$ is a smoothing of $\Pi_E(\phi)$. Then $\Pi_{\CC^d}(\psi_j)\downarrow \Pi_E(\phi)$ and
$$\limsup_{n\to \infty} \frac{(d+1)n_d}{2nd_n} [\mathcal B^{\infty}(E,n \phi):\mathcal B^{\infty}(\CC^d,n\hat \phi)]$$
$$\leq \lim_{n\to \infty} \frac{(d+1)n_d}{2nd_n} [\mathcal B^{\infty}(\CC^d,n \psi_j):\mathcal B^{\infty}(\CC^d,n\hat \phi)]$$
again by (\ref{bvmon}); this limit equals
$$-\mathcal E(\Pi_{\CC^d}(\psi_j), \Pi_{\CC^d}(\hat \phi))$$
by applying Case 2, this time to $(\CC^d, \psi_j)$ and $(\CC^d,\hat \phi)$. Now 
\begin{equation}
\label{epsi}
\mathcal E(\Pi_{\CC^d}(\psi_j), \Pi_{\CC^d}(\hat \phi)) \ \hbox{converges to} \ \mathcal E(\Pi_E(\phi), \Pi_{\CC^d}(\hat \phi))
\end{equation}
as $j\to \infty$ by (\ref{downlemma}). Then (\ref{ephij}) and (\ref{epsi}) imply (\ref{ephin}) which completes the proof of Theorem \ref{keycn}.

\section{Asymptotic weighted $P-$Fekete measures, weighted $P-$optimal measures and Bergman asymptotics.}\label{sec:fob}

As in \cite{BBN}, we will apply the following calculus lemma (cf., Lemma 7.6 in \cite{[BB]} or Lemma 3.1 in \cite{BBN}) to an appropriate sequence of real-valued functions $\{f_n\}$ in order to prove a general result, Proposition \ref{genres}, on convergence to the Monge-Amp\`ere measure of a weighted $P-$extremal function. This proposition utilizes the differentiability result, Proposition \ref{diffpropcor}, and yields immediate corollaries on the items in the title of this section. 

\begin{lemma}\label{calc}
Let $f_n$ be a sequence of real-valued, concave functions on $\RR$ and let $g$ be a function on $\RR$. Suppose
$$\liminf_{n\to \infty} f_n(t) \geq g(t) \ \hbox{for all} \ t \ \hbox{and} \ \lim_{n\to \infty} f_n(0) = g(0)$$
and that $f_n$ and $g$ are differentiable at $0$. Then $\lim_{n\to \infty}  f_n'(0) = g'(0)$.
\end{lemma}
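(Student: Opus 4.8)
The statement is the classical fact that pointwise convergence plus differentiability of the limit forces the derivatives of concave functions to converge at any point of differentiability of the limit. I would prove it directly from the definition of concavity and the one-sided difference quotients, exploiting that for a concave function the forward difference quotient is a nonincreasing function of the increment and the backward one is nondecreasing. There is no need for any pluripotential theory here; it is a self-contained real-variable lemma.

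First I would fix $t>0$ and use concavity of $f_n$ to bound its derivative at $0$ from above: since $f_n$ is concave, $f_n'(0) \le \frac{f_n(t)-f_n(0)}{t}$. Taking $\limsup_{n\to\infty}$ and using $\limsup_n f_n(t) \ge \liminf_n f_n(t) \ge g(t)$ together with $\lim_n f_n(0)=g(0)$ would not immediately help, because we need an upper bound; instead I would use that $\limsup_n f_n(t)$ could be large. The correct move is to bound $f_n'(0)$ from below using a point $t<0$: concavity gives $f_n'(0) \ge \frac{f_n(t)-f_n(0)}{t}$ for $t<0$ (dividing the concavity inequality by the negative number $t$ reverses it). Then, for fixed $t<0$,
$$\liminf_{n\to\infty} f_n'(0) \ge \liminf_{n\to\infty}\frac{f_n(t)-f_n(0)}{t} \ge \frac{g(t)-g(0)}{t},$$
where the last inequality uses $\liminf_n f_n(t)\ge g(t)$, $\lim_n f_n(0)=g(0)$, and division by $t<0$. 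Letting $t\uparrow 0$ and using differentiability of $g$ at $0$ yields $\liminf_{n\to\infty} f_n'(0) \ge g'(0)$.

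For the reverse inequality I would pass to $t>0$: concavity gives $f_n'(t) \le \frac{f_n(t)-f_n(0)}{t}$ as above, but to control $\limsup_n f_n'(0)$ I cannot use $\limsup_n f_n(t)$ directly, so I would instead combine the two one-sided estimates. Fix $t<0<s$. Concavity of $f_n$ gives
$$\frac{f_n(s)-f_n(0)}{s} \le f_n'(0) \le \frac{f_n(0)-f_n(t)}{-t};$$
wait — the right-hand inequality is the one I want, and it reads $f_n'(0)\le \frac{f_n(0)-f_n(t)}{-t}$ for $t<0$, which is exactly the backward difference quotient. Taking $\limsup_{n\to\infty}$ and using $\liminf_n f_n(t)\ge g(t)$ and $\lim_n f_n(0)=g(0)$ gives $\limsup_{n\to\infty} f_n'(0) \le \frac{g(0)-g(t)}{-t}$; letting $t\uparrow 0$ and using differentiability of $g$ gives $\limsup_{n\to\infty} f_n'(0)\le g'(0)$. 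Together with the previous paragraph this yields $\lim_{n\to\infty} f_n'(0)=g'(0)$.

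The only subtle point — the one I would be most careful about — is the bookkeeping of inequality directions when dividing by negative increments, and the fact that we only ever need $\liminf$ hypotheses on the $f_n(t)$ (never $\limsup$ control), which is exactly why concavity is essential: it lets us sandwich $f_n'(0)$ between a forward and a backward difference quotient, and in each sandwich only a lower bound on the relevant $f_n(t)$ value is needed. I would also note in passing that differentiability of $f_n$ at $0$ is only used to make $f_n'(0)$ well-defined; concave functions are automatically differentiable off a countable set, but here it is simply assumed. This lemma is then applied later with $f_n = \frac{l_n}{n d_n} f_n$-type rescalings of the Gram-determinant functions from Lemma \ref{1stderiv} and Lemma \ref{2ndderiv}, whose concavity was established there.
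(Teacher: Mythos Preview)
Your overall approach is the standard one (and the paper does not give a proof, citing \cite{[BB]} and \cite{BBN}), but you have the concavity inequalities reversed in your first paragraph, so the $\liminf$ half of the argument as written is wrong. For a concave $f$ the tangent line at $0$ lies above the graph: $f(t)\le f(0)+f'(0)t$. Hence for $t>0$ one has $\frac{f(t)-f(0)}{t}\le f'(0)$, and for $t<0$ one has $\frac{f(t)-f(0)}{t}\ge f'(0)$ --- exactly the opposite of what you wrote. Your inequality ``$f_n'(0)\ge \frac{f_n(t)-f_n(0)}{t}$ for $t<0$'' is false for concave $f_n$ (try $f(x)=-x^2$, $t=-1$), and the subsequent step ``$\liminf_n \frac{f_n(t)-f_n(0)}{t}\ge \frac{g(t)-g(0)}{t}$'' also goes the wrong way once you divide a $\liminf$ lower bound by a negative number.

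The fix is simply to swap the roles of positive and negative increments. Your second paragraph, using $t<0$ and the (correct) inequality $f_n'(0)\le \frac{f_n(0)-f_n(t)}{-t}$, legitimately yields $\limsup_n f_n'(0)\le g'(0)$. For the lower bound, take $s>0$: concavity gives $f_n'(0)\ge \frac{f_n(s)-f_n(0)}{s}$, whence
\[
\liminf_{n\to\infty} f_n'(0)\ \ge\ \frac{\liminf_n f_n(s)-\lim_n f_n(0)}{s}\ \ge\ \frac{g(s)-g(0)}{s},
\]
and letting $s\downarrow 0$ gives $\liminf_n f_n'(0)\ge g'(0)$. So your instinct that ``the only subtle point is the bookkeeping of inequality directions'' was exactly right --- that bookkeeping just needs to be redone.
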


\noindent Here ``differentiable at the origin'' means that the usual (two-sided) limit of the difference quotients exists; the conclusion is not true with one-sided limits.

As in Lemma \ref{1stderiv} in subsection \ref{gramsec}, given a closed set $E$, an admissible weight $w=e^{-Q}$ on $E$, and a function $u\in C(E)$, we consider the weight $w_t(z):=w(z)\exp(-tu(z)),$ $t\in\RR,$ and we let $\{\mu_n\}$ be a sequence of measures on $E$. 

{\it For the rest of this section, we take $E=K$, a compact set, so each $w_t$ is admissible. In addition, in computing Gram matrices, we fix the standard monomial basis $\beta_n=\{e_1,...,e_{d_n}\}$ of $Poly(nP)$; and we fix $v=H_P$ in the second slot of $\mathcal E(u,v)$.} 

Now let $\mu$ be a probability measure on $K$ and let $u\in C^2(K)$. Recalling (\ref{key}), define 
$$g(t):= - \log \delta^{w_t}(K)=\frac{1}{n_d d\mathcal A}\mathcal E(\Pi(Q+tu)).$$
Then
$$g(0)=- \log \delta^{w}(K)=\frac{1}{n_d d\mathcal A}\mathcal E(\Pi(Q)).$$
From Proposition \ref{diffpropcor} 
$$g'(0)= \frac{d+1}{n_d d\mathcal A}\int_Ku(z)(dd^c\Pi(Q))^d.$$

Note that for each $n$, $\mu_n$ is a candidate to be a $P-$optimal measure of order $n$ for $K$ and $w_t$. Thus, if $\mu_n^t$ is a $P-$optimal measure of order $n$ for $K$ and $w_t$, we have 
$$\det G_n^{\mu_n,w_t} \leq \det G_n^{\mu_n^t,w_t}$$
and, from Proposition \ref{tfd} (see Remark \ref{rmk42}),
$$\lim_{n\to \infty}  \frac{1}{2l_n} \cdot \log \det G_n^{\mu_n^t,w_t}= \log \delta^{w_t}(K)=-g(t).$$
Thus with 
$$f_n(t):=-{1\over 2 l_n}\log\,{\rm det}(G_n^{\mu_n,w_t})$$
as in (\ref{fn}), we have
$$f_n(0)=\frac{-1}{2l_n}\log\,{\rm det}(G_n^{\mu_n,w}) \ \hbox{and} \ \liminf f_n(t) \geq g(t) \ \hbox{for all} \ t.$$
From Lemma \ref{1stderiv}, we have
$$f_n'(0)={n\over l_n}\int_K u(z)B_n^{\mu_n,w}(z)d\mu_n$$
and from Lemma \ref{2ndderiv}, the functions $f_n(t)$ are concave, i.e., $f_n''(t)\le 0$.

Using Lemma \ref{calc} and (\ref{key}), we have the following general result.

\begin{proposition}
\label{genres}
Let $K\subset \CC^d$ be compact with admissible weight $w$. Let $\{\mu_n\}$ be a sequence of probability measures on $K$ with the property that 
\begin{equation}\label{fnhyp}
\lim_{n\to \infty}{1 \over 2 l_n}\log\,{\rm det}(G_n^{\mu_n,w})=\log \mathcal F_P(K,Q)
\end{equation}
i.e., $\lim_{n\to \infty}f_n(0)=g(0)$. Then 
$$\frac{n}{l_n}B_n^{\mu_n,w} d\mu_n \to \frac{d+1}{n_d d\mathcal A} (dd^c\Pi(Q))^d \ \hbox{weak-*; i.e.,}$$
\begin{equation}\label{strongasym}
\frac{n_d}{d_n}B_n^{\mu_n,w} d\mu_n \to  (dd^c\Pi(Q))^d \ \hbox{weak-}*.
\end{equation}
\end{proposition}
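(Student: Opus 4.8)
The plan is to apply the calculus lemma, Lemma \ref{calc}, once for each smooth test direction $u\in C^2(K)$, and then assemble the resulting scalar limits into the desired weak-* convergence of measures. Throughout we keep the notation set up just before the statement: $w_t=w\,e^{-tu}$, $f_n(t)=-\tfrac{1}{2l_n}\log\det(G_n^{\mu_n,w_t})$, and $g(t)=\tfrac{1}{n_d d\mathcal A}\mathcal E(\Pi(Q+tu))=-\log\delta^{w_t}(K)$.

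First I would fix $u\in C^2(K)$ and verify that the pair $(f_n,g)$ meets all the hypotheses of Lemma \ref{calc}. Since $K$ is compact, $w_t$ is admissible for every $t\in\RR$, so $f_n$ is defined on all of $\RR$; it is concave by Lemma \ref{2ndderiv} and (two-sidedly) differentiable by Lemma \ref{1stderiv}, with $f_n'(0)=\tfrac{n}{l_n}\int_K u\,B_n^{\mu_n,w}\,d\mu_n$. The function $g$ is differentiable at $0$ by Proposition \ref{diffpropcor}, with $g'(0)=\tfrac{d+1}{n_d d\mathcal A}\int_K u\,(dd^c\Pi(Q))^d$. Hypothesis (\ref{fnhyp}) together with Remark \ref{rmk42} gives $\lim_n f_n(0)=-\log\mathcal F_P(K,Q)=g(0)$. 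For the lower bound $\liminf_n f_n(t)\geq g(t)$, I would observe that $\mu_n$ is a competitor for a $P-$optimal measure of degree $n$ for $K$ and $w_t$, so $\det G_n^{\mu_n,w_t}\leq\det G_n^{\mu_n^t,w_t}$ for a $P-$optimal $\mu_n^t$; combining this with Proposition \ref{tfd} applied to the triple $(P,K,Q+tu)$ (whose hypothesis (\ref{gnlimit}) holds by Remark \ref{rmk42}) yields $f_n(t)\geq-\tfrac{1}{2l_n}\log\det G_n^{\mu_n^t,w_t}\to-\log\delta^{w_t}(K)=g(t)$.

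Lemma \ref{calc} then gives $\lim_n f_n'(0)=g'(0)$, i.e.
$$\lim_{n\to\infty}\frac{n}{l_n}\int_K u\,B_n^{\mu_n,w}\,d\mu_n=\frac{d+1}{n_d d\mathcal A}\int_K u\,(dd^c\Pi(Q))^d$$
for every $u\in C^2(K)$. To promote this to weak-* convergence against all of $C(K)$, I would note that the measures $\nu_n:=\tfrac{n}{l_n}B_n^{\mu_n,w}\,d\mu_n$ are positive, supported in the compact set $K$, with total mass $\nu_n(K)=\tfrac{n}{l_n}\int_K B_n^{\mu_n,w}\,d\mu_n=\tfrac{nd_n}{l_n}$, which converges (hence is bounded) by (\ref{key}) and (\ref{lndn}); since restrictions of $C^2$ functions are dense in $C(K)$ in the sup norm, the displayed limit together with this uniform mass bound forces $\nu_n\to\tfrac{d+1}{n_d d\mathcal A}(dd^c\Pi(Q))^d$ weak-*. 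Multiplying through by $\tfrac{n_d l_n}{nd_n}$, which tends to $\tfrac{n_d d\mathcal A}{d+1}$ by (\ref{lndn}), gives the normalized form (\ref{strongasym}).

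The only genuinely delicate points I anticipate are (i) checking that all three hypotheses of Lemma \ref{calc} hold for \emph{every} $t\in\RR$, especially the inequality $\liminf_n f_n(t)\geq g(t)$, which requires invoking Proposition \ref{tfd} for the perturbed triple $(P,K,Q+tu)$ rather than merely at $t=0$; and (ii) the routine but essential density/uniform-mass step needed to pass from convergence against $C^2$ directions to a genuine weak-* limit of measures. Everything else is a direct assembly of the differentiability theorem (Proposition \ref{diffpropcor}), the derivative formula (Lemma \ref{1stderiv}), concavity (Lemma \ref{2ndderiv}), and the calculus lemma.
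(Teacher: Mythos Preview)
Your proposal is correct and follows essentially the same route as the paper: the setup of $f_n$ and $g$, the verification of the hypotheses of Lemma~\ref{calc} via Lemmas~\ref{1stderiv}--\ref{2ndderiv}, Proposition~\ref{diffpropcor}, and Proposition~\ref{tfd} for the perturbed weight $w_t$, and the final density step from $C^2(K)$ to $C(K)$ all match the paper's argument. If anything, you are slightly more explicit than the paper about the uniform mass bound needed for the density/approximation step.
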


\noindent Note that since all $\mu_n$ are probability measures on $K$, to verify weak-* convergence, it suffices to test with $C^2-$functions on $K$. 

From Theorem \ref{keycn} (more precisely, Remark \ref{rmk42} and equation (\ref{wbmfinal})) we have the general Bergman asymptotic result.

\begin{corollary} \label{strongasymp}
{\bf [Bergman Asymptotics]} If $\mu$ is a Bernstein-Markov measure for the triple $(P,K,Q)$, then 
$$\frac{n_d}{d_n}B_n^{\mu,w} d\mu \to (dd^c\Pi(Q))^d \ \hbox{weak-}*.
$$
\end{corollary}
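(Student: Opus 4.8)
The plan is to deduce Corollary \ref{strongasymp} (Bergman Asymptotics) directly from Proposition \ref{genres} by checking that the single hypothesis (\ref{fnhyp}) of that proposition is satisfied when $\mu_n = \mu$ for all $n$ and $\mu$ is a Bernstein-Markov measure for the triple $(P,K,Q)$. In other words, the only thing to verify is
\[
\lim_{n\to\infty} \frac{1}{2l_n}\log\det(G_n^{\mu,w}) = \log \mathcal F_P(K,Q),
\]
after which the conclusion $\frac{n_d}{d_n}B_n^{\mu,w}\,d\mu \to (dd^cV_{P,K,Q}^*)^d$ weak-$*$ is exactly (\ref{strongasym}) with the constant sequence of measures.

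The verification of (\ref{fnhyp}) for $\mu_n\equiv\mu$ is precisely the content of Remark \ref{rmk42}: since $\mu$ is a Bernstein-Markov measure for $(P,K,Q)$, we may apply Theorem \ref{keycn} with $(E,\mu,\phi) = (K,\mu,Q)$ in the first slot and $(E',\mu',\phi') = (T,\mu_T,0)$ in the second slot, using Remark \ref{torus2} that $\mu_T$ is a Bernstein-Markov measure for $(P,T,0)$ and Example \ref{torus} that $V_{P,T}^* = H_P$. The $L^2$ statement of Theorem \ref{keycn} then reads
\[
\lim_{n\to\infty}\frac{-(d+1)n_d}{2nd_n}[\mathcal B^2(K,\mu,nQ):\mathcal B^2(T,\mu_T,0)] = \mathcal E(V_{P,K,Q}^*, H_P).
\]
By (\ref{gramtorus}) the bracket on the left equals $\log\det G_n^{\mu,w}(\beta_n)$ for the monomial basis $\beta_n$ (which is $\mu_T$-orthonormal, by Remark \ref{torus2}), and dividing through using $l_n/(nd_n)\to d\mathcal A/(d+1)\cdot\big(l_n(\Sigma)/(nd_n(\Sigma))\big)$ — more precisely (\ref{key}) and (\ref{lndn}) — converts $\frac{1}{nd_n}$ into $\frac{1}{l_n}$ up to the factor $d\mathcal A/(d+1)$, yielding exactly (\ref{wbmfinal}):
\[
\lim_{n\to\infty}\frac{1}{2l_n}\log\det(G_n^{\mu,w}) = \frac{-1}{n_d\, d\,\mathcal A}\,\mathcal E(V_{P,K,Q}^*,H_P) = \log\mathcal F_P(K,Q).
\]
This is (\ref{fnhyp}).

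With (\ref{fnhyp}) in hand, Proposition \ref{genres} applied to the constant sequence $\mu_n = \mu$ gives the desired weak-$*$ convergence immediately, since the Bernstein-Markov hypothesis in particular guarantees $\mu$ is non-pluripolar so that the Gram matrices $G_n^{\mu,w}$ are non-degenerate and the Bergman functions $B_n^{\mu,w}$ are well-defined. I do not expect any genuine obstacle here: the corollary is essentially a bookkeeping specialization of the general result. The one point requiring a little care is making sure the normalization constants line up — that is, tracking the factors $n_d$, $\mathcal A$, $d+1$, and the ratio $l_n/d_n$ through (\ref{key}), (\ref{lndn}), (\ref{gramvolume}) and (\ref{gramtorus}) so that the statement $\frac{n_d}{d_n}B_n^{\mu,w}\,d\mu\to (dd^cV_{P,K,Q}^*)^d$ comes out with coefficient exactly $1$; but this is precisely the computation already carried out in the proof of Proposition \ref{genres} and in Remark \ref{rmk42}, so it need only be invoked rather than redone.
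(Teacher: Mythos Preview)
Your proposal is correct and is precisely the paper's approach: the corollary is stated immediately after the sentence ``From Theorem \ref{keycn} (more precisely, Remark \ref{rmk42} and equation (\ref{wbmfinal})) we have the general Bergman asymptotic result,'' which is exactly the verification of (\ref{fnhyp}) for the constant sequence $\mu_n=\mu$ followed by an appeal to Proposition \ref{genres}. Your write-up is more detailed than the paper's one-line justification, but the argument is identical.
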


Next, suppose $\mu_n$ is a $P-$optimal measure of order $n$ for $K$ and $w$. 

\begin{corollary} \label{awom} 
{\bf [Weighted Optimal Measures]} 
Let $K\subset \CC^d$ be compact with admissible weight $w$. Let $\{\mu_n\}$ be a sequence of $P-$optimal measures for $K,w$. Then
$$\mu_n  \to \frac{1}{n_d}(dd^c\Pi(Q))^d \ \hbox{weak-}*.
$$
\end{corollary}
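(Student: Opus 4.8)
The plan is to deduce Corollary~\ref{awom} directly from Proposition~\ref{genres} by checking that a sequence $\{\mu_n\}$ of $P-$optimal measures of order $n$ for $K$ and $w$ satisfies the hypothesis (\ref{fnhyp}). Indeed, once (\ref{fnhyp}) holds for $\mu_n$, Proposition~\ref{genres} gives immediately that
$$\frac{n_d}{d_n}B_n^{\mu_n,w}\,d\mu_n \to (dd^c\Pi(Q))^d \ \hbox{weak-}*,$$
and then the defining property of $P-$optimal measures, namely $B_n^{\mu_n,w}(z)=d_n$ a.e.-$\mu_n$ (equation (\ref{wb}), via Proposition~\ref{KW}), lets us replace $\frac{n_d}{d_n}B_n^{\mu_n,w}\,d\mu_n$ by $n_d\,\mu_n$. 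Dividing by $n_d$ yields exactly $\mu_n \to \frac{1}{n_d}(dd^c\Pi(Q))^d$ weak-$*$.

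So the only real step is verifying (\ref{fnhyp}) for $P-$optimal measures. First I would recall Proposition~\ref{tfd}: under the assumption that the limit (\ref{gnlimit}) exists — which holds here because Remark~\ref{rmk42} establishes it as a consequence of Theorem~\ref{keycn} — one has
$$\lim_{n\to\infty}{\rm det}(G_n^{\mu_n,w})^{\frac{1}{2l_n}}=\mathcal F_P(K,Q)$$
for any sequence $\{\mu_n\}$ of $P-$optimal measures of order $n$. Taking logarithms gives precisely $\lim_{n\to\infty}\frac{1}{2l_n}\log{\rm det}(G_n^{\mu_n,w})=\log\mathcal F_P(K,Q)$, which is (\ref{fnhyp}). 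Thus Proposition~\ref{genres} applies with this choice of $\{\mu_n\}$.

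I do not expect any genuine obstacle here: the corollary is essentially a packaging of Proposition~\ref{genres} (whose proof, via the calculus Lemma~\ref{calc}, is the substantive content) together with the two structural facts about $P-$optimal measures — the identity $B_n^{\mu_n,w}=d_n$ a.e.-$\mu_n$ and the transfinite-diameter limit of Proposition~\ref{tfd}. The one point worth stating carefully in the write-up is that $\mu_n$ is a probability measure on $K$ (so weak-$*$ testing against $C^2(K)$ functions suffices, as noted after Proposition~\ref{genres}), and that the substitution $B_n^{\mu_n,w}\,d\mu_n = d_n\,\mu_n$ is valid because the two measures agree (they differ only on a $\mu_n$-null set). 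Assembling these observations gives the result in a few lines.

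\begin{proof}
Let $\{\mu_n\}$ be a sequence of $P-$optimal measures of order $n$ for $K$ and $w$. Since the limit (\ref{gnlimit}) exists (Remark~\ref{rmk42}), Proposition~\ref{tfd} gives
$$\lim_{n\to\infty}{\rm det}(G_n^{\mu_n,w})^{\frac{1}{2l_n}}=\mathcal F_P(K,Q),$$
and taking logarithms yields (\ref{fnhyp}). Hence Proposition~\ref{genres} applies to $\{\mu_n\}$ and gives
$$\frac{n_d}{d_n}B_n^{\mu_n,w}\,d\mu_n \to (dd^c\Pi(Q))^d \ \hbox{weak-}*.$$
By Proposition~\ref{KW} and (\ref{wb}), since $\mu_n$ is $P-$optimal of degree $n$ we have $B_n^{\mu_n,w}(z)=d_n$ a.e.-$\mu_n$, so $B_n^{\mu_n,w}\,d\mu_n = d_n\,\mu_n$. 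Therefore
$$n_d\,\mu_n = \frac{n_d}{d_n}B_n^{\mu_n,w}\,d\mu_n \to (dd^c\Pi(Q))^d \ \hbox{weak-}*,$$
and dividing by $n_d$ gives $\mu_n \to \frac{1}{n_d}(dd^c\Pi(Q))^d$ weak-$*$, as claimed.
\end{proof}
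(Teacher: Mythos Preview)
Your proof is correct and follows essentially the same approach as the paper: both invoke Proposition~\ref{tfd} to verify (\ref{fnhyp}), apply Proposition~\ref{genres} to obtain (\ref{strongasym}), and then use $B_n^{\mu_n,w}=d_n$ a.e.-$\mu_n$ from (\ref{wb}) to reduce to the claimed weak-$*$ convergence of $\mu_n$. The paper's proof is just a one-sentence version of what you have written out in full.
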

\begin{proof} We have $B_n^{\mu_n,w}=d_n$ a.e. $\mu_n$ on $K$ from \ref{wb} so that the result follows immediately from Proposition \ref{tfd} and Proposition \ref{genres}, specifically, equation (\ref{strongasym}).  \end{proof}

Finally, we prove the result promised in Section \ref{sec:back}.

\begin{corollary} \label{asympwtdfek} {\bf [Asymptotic Weighted $P-$Fekete Points]} Let $K\subset \CC^d$ be compact with admissible weight $w$. For each $n$, take points $z_1^{(n)},z_2^{(n)},\cdots,z_{d_n}^{(n)}\in K$ for which 
\begin{equation}\label{wam}
 \lim_{n\to \infty}\bigl[|VDM(z_1^{(n)},\cdots,z_{d_n}^{(n)})|w(z_1^{(n)})^n\cdots w(z_{d_n}^{(n)})^n\bigr]^{1 \over  l_n}=\mathcal F_P(K,Q)
\end{equation}
({\it asymptotically} weighted $P-$Fekete points) and let $\mu_n:= \frac{1}{d_n}\sum_{j=1}^{d_n} \delta_{z_j^{(n)}}$. Then
$$
\mu_n \to \frac{1}{n_d}(dd^c\Pi(Q))^d \ \hbox{weak}-*.
$$
\end{corollary}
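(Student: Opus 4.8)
The plan is to deduce Corollary \ref{asympwtdfek} directly from Proposition \ref{genres} by verifying that the discrete measures $\mu_n = \frac{1}{d_n}\sum_{j=1}^{d_n}\delta_{z_j^{(n)}}$ satisfy the Gram-determinant hypothesis (\ref{fnhyp}). First I would recall that for a discrete probability measure $\nu_n$ supported on $d_n$ points, the Gram determinant with respect to the standard monomial basis $\beta_n$ factors as a Vandermonde: specifically, following the computation in the proof of Proposition \ref{tfd}, one has
$$
{\rm det}(G_n^{\mu_n,w}) = \frac{1}{d_n^{d_n}}|VDM(z_1^{(n)},\cdots,z_{d_n}^{(n)})|^2 w(z_1^{(n)})^{2n}\cdots w(z_{d_n}^{(n)})^{2n}.
$$
Taking $\frac{1}{2l_n}\log$ of both sides, using $\log(d_n^{d_n})^{1/(2l_n)} = \frac{d_n\log d_n}{2l_n}\to 0$ (since $d_n$ grows polynomially and $l_n\sim \frac{nd}{d+1}\mathcal A d_n$ so $l_n/d_n\to\infty$), and applying the hypothesis (\ref{wam}), we get exactly
$$
\lim_{n\to\infty}\frac{1}{2l_n}\log{\rm det}(G_n^{\mu_n,w}) = \lim_{n\to\infty}\log\bigl[|VDM(z_1^{(n)},\cdots,z_{d_n}^{(n)})|w(z_1^{(n)})^n\cdots w(z_{d_n}^{(n)})^n\bigr]^{1/l_n} = \log\mathcal F_P(K,Q),
$$
which is precisely (\ref{fnhyp}).

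Next I would invoke Proposition \ref{genres} to conclude $\frac{n_d}{d_n}B_n^{\mu_n,w}\,d\mu_n \to (dd^c\Pi(Q))^d$ weak-$*$. To pass from this to the statement about $\mu_n$ itself, I would observe that for the discrete measure $\mu_n = \frac{1}{d_n}\sum_j\delta_{z_j^{(n)}}$, the integral $\int_K B_n^{\mu_n,w}\,d\mu_n = d_n$ (this is the general identity $\int_K B_n^{\nu,w}d\nu = d_n$ valid for any probability measure, noted just before Proposition \ref{KW}). More is true: the points $z_j^{(n)}$ being asymptotically Fekete means, by the argument inside the proof of Proposition \ref{tfd}, that the discrete measure $\mu_n$ is asymptotically $P$-optimal, and hence one expects $B_n^{\mu_n,w} \approx d_n$ $\mu_n$-a.e. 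The cleanest route is to note that $\frac{n_d}{d_n}B_n^{\mu_n,w}d\mu_n$ and $\frac{n_d}{d_n}\cdot d_n\cdot \mu_n = n_d\mu_n$ are asymptotically equal as measures, since their total masses both converge (to $n_d$, using (\ref{norm})) and $\frac{n_d}{d_n}B_n^{\mu_n,w}d\mu_n$ is dominated in a controllable way; alternatively, one uses the weak-$*$ convergence just obtained together with the fact that $B_n^{\mu_n,w}/d_n$ is bounded above $\mu_n$-a.e. by a sub-exponential factor coming from the near-optimality, and that $\int B_n^{\mu_n,w}d\mu_n = d_n$ forces concentration.

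The main obstacle is this last comparison step: showing that $\frac{n_d}{d_n}B_n^{\mu_n,w}\,d\mu_n$ and $n_d\,\mu_n = \frac{n_d}{d_n}\sum_j\delta_{z_j^{(n)}}$ have the same weak-$*$ limit. I would handle it by following \cite{BBN} (their analogue for $\mathcal P_n$): one fixes $\phi\in C^2(K)$, forms the perturbed weight $w_t = we^{-t\phi}$, and reruns the argument of Proposition \ref{genres} with $f_n(t) = -\frac{1}{2l_n}\log{\rm det}(G_n^{\mu_n,w_t})$; because of the Vandermonde factorization, $f_n'(0) = \frac{n}{l_n}\int_K \phi\, B_n^{\mu_n,w}d\mu_n$ is directly expressible, and one also has the comparison $f_n(t)\geq g(t)$ from $P$-optimality of genuine Fekete measures together with Proposition \ref{tfd}; Lemma \ref{calc} then gives $\lim f_n'(0) = g'(0) = \frac{d+1}{n_d d\mathcal A}\int_K\phi\,(dd^c\Pi(Q))^d$. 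Separately, since the points are asymptotically Fekete, a one-sided estimate (using that moving to the perturbed-Fekete configuration can only help) shows $\liminf_n \frac{n}{l_n}\int_K\phi\,d\mu_n \cdot \frac{1}{\text{(normalization)}}$ has the right lower bound, and testing with both $\phi$ and $-\phi$ pins down $\lim_n \int_K \phi\,d\mu_n = \frac{1}{n_d}\int_K\phi\,(dd^c\Pi(Q))^d$, giving the weak-$*$ convergence of $\mu_n$ directly. The technical care needed is precisely in the one-sided Fekete estimates and in verifying that the $\frac{d_n\log d_n}{2l_n}\to 0$ and error terms are genuinely $o(1)$ after division by $l_n$; these follow from Remark \ref{numbers} and (\ref{key}).
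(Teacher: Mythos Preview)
Your verification of hypothesis (\ref{fnhyp}) via the Vandermonde factorization of $\det(G_n^{\mu_n,w})$ is correct and matches the paper. The gap is in your ``comparison step,'' which you identify as the main obstacle and then attack with an elaborate perturbation/one-sided argument. This is unnecessary: the paper observes that for \emph{any} equally weighted discrete measure $\mu_n=\frac{1}{d_n}\sum_j\delta_{z_j^{(n)}}$ on $d_n$ points with nonzero weighted Vandermonde, one has $B_n^{\mu_n,w}(z_j^{(n)})=d_n$ \emph{exactly}, for every $j$, by direct calculation. (Take the weighted Lagrange basis $q_k(z)=\sqrt{d_n}\,\ell_k(z)/w(z_k)^n$; it is orthonormal for $\langle\cdot,\cdot\rangle_{\mu_n,w}$, and evaluating $\sum_k|q_k|^2 w^{2n}$ at a node gives $d_n$.) Thus $B_n^{\mu_n,w}=d_n$ $\mu_n$-a.e.\ holds identically, not just asymptotically, and
\[
\frac{n_d}{d_n}B_n^{\mu_n,w}\,d\mu_n = n_d\,\mu_n
\]
as measures. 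The weak-$*$ convergence from Proposition \ref{genres} then gives $n_d\mu_n\to (dd^c\Pi(Q))^d$ immediately.

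Your ``one expects $B_n^{\mu_n,w}\approx d_n$'' and the ensuing workaround (reperturbing weights, one-sided estimates with $\pm\phi$) are superfluous once you have this exact identity; moreover, the workaround as sketched is circular, since rerunning the argument of Proposition \ref{genres} with the same $\mu_n$ lands you at the same conclusion $\frac{n_d}{d_n}B_n^{\mu_n,w}d\mu_n\to(dd^c\Pi(Q))^d$ you are trying to upgrade.
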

\begin{proof} By direct calculation, we have $B_n^{\mu_n,w}(z_j^{(n)})=d_n$ for $j=1,...,d_n$ and hence a.e. $\mu_n$ on $K$. Indeed, this property holds for {\it any} discrete, equally weighted measure $\mu_n:= \frac{1}{d_n}\sum_{j=1}^{d_n} \delta_{z_j^{(n)}}$ with $$|VDM(z_1^{(n)},\cdots,z_{d_n}^{(n)})|w(z_1^{(n)})^n\cdots w(z_{d_n}^{(n)})^n\not =0.$$ 
Using 
$${\rm det}(G_n^{\mu_n,w})={1\over d_n^{d_n}}|VDM(z_1^{(n)},\cdots,z_{d_n}^{(n)})|^2w(z_1^{(n)})^{2n}\cdots w(z_{d_n}^{(n)})^{2n},$$
the result follows from Proposition \ref{genres}, specifically, equation (\ref{strongasym}). \end{proof}

\end{document}